\theoremstyle{plain}
\newtheorem{theorem}{Theorem}[section]
\newtheorem{lemma}[theorem]{Lemma}
\newtheorem{proposition}[theorem]{Proposition}
\theoremstyle{definition}
\newtheorem{definition}[theorem]{Definition}
\theoremstyle{remark}
\newtheorem{remark}{Remark}
    \DeclareMathOperator\supp{supp}
    \DeclareMathOperator\Fuj{Fuj}
    \DeclareMathOperator\Span{span}
    \DeclareMathOperator\loc{loc}
     \DeclareMathOperator\hor{H}
 \date{}
\begin{document}

\title{Lifespan estimates for local in time solutions to the semilinear heat equation on the Heisenberg group}

\author{Vladimir Georgiev$^\mathrm{a,b,c}$, Alessandro Palmieri$^\mathrm{a}$}

\date{\small{$^\mathrm{a}$ Department of Mathematics, University of Pisa, Largo B. Pontecorvo 5, 56127 Pisa, Italy} \\ $^\mathrm{b}$ Faculty of Science and Engineering, Waseda University 3-4-1, Okubo, Shinjuku-ku, Tokyo 169-8555, Japan \\ $^\mathrm{c}$
Institute of Mathematics and Informatics–BAS
Acad. G. Bonchev Str., Block 8, 1113 Sofia, Bulgaria\\ [2ex] \normalsize{\today} }
\maketitle

\begin{abstract}
In this paper we consider the semilinear Cauchy problem for the heat equation with power nonlinearity in the Heisenberg group $\mathbf{H}_n$. The heat operator is given in this case by $\partial_t-\Delta_{\hor}$, where $\Delta_{\hor}$ is the so-called sub-Laplacian on $\mathbf{H}_n$. We prove that the Fujita exponent $1+2/Q$ is critical, where $Q=2n+2$ is the homogeneous dimension of $\mathbf{H}_n$. Furthermore, we prove sharp lifespan estimates for local in time solutions in the subcritical case and in the critical case. In order to get the upper bound estimate for the lifespan (especially, in the critical case) we employ a revisited test function method developed recently by Ikeda-Sobajima. On the other hand, to find the lower bound estimate for the lifespan we prove a local in time result in  weighted $L^\infty$ space.
\end{abstract}

\begin{flushleft}
\textbf{Keywords} Semilinear heat equation, Heisenberg group, Critical exponent of Fujita-type, Lifespan estimates, Test function method, Weighted $L^\infty$ spaces
\end{flushleft}

\begin{flushleft}
\textbf{AMS Classification (2010)} Primary: 35A01, 35B44, 35R03 ; Secondary: 35K05, 35K08, 35K58

\end{flushleft}

\section{Introduction}

 {The semilinear heat equation  on the Heisenberg group has a critical exponent of Fujita-type. This result is established recently in \cite{{Ruz18}} and the global existence result in the supercritical case is obtained assuming very fast exponential decay of the initial data for the corresponding Cauchy problem.  Our main goal in this work is to derive sharp upper and lower bound estimates for the lifespan of the solution in the subcritical and critical case. Moreover, our goal is to treat the supercritical case and show global existence result using larger space of initial data with polynomial decay at infinity.  }

The Heisenberg group is the Lie group $\mathbf{H}_n=\mathbb{R}^{2n+1}$ equipped with the multiplication rule
\begin{equation*}
(x,y,\tau)\circ (x',y',\tau')= (x+x',y+y',\tau+\tau'+2 (x\cdot y'-x'\cdot y)),
\end{equation*} where $\cdot$ denotes the standard scalar product in $\mathbb{R}^n$. The identity element for $\mathbf{H}_n$ is $0$ and $\eta^{-1}=-\eta$ for any $\eta\in \mathbf{H}_n$.

 A system of left-invariant vector fields that span the Lie algebra $\mathfrak{h}_n$ is given by
\begin{align*}
\partial_\tau, \ X_j  \doteq  \partial_{x_j}+2y_j \,\partial_\tau,   \ Y_j  \doteq  \partial_{y_j}-2x_j  \, \partial_\tau ,
\end{align*} where $1\leq j\leq n$. This system satisfies the commutation relations
\begin{align*}
[X_j,Y_k]=-4\delta_{jk} \, \partial_\tau \quad \mbox{for}  \ 1\leq j,k \leq n.
\end{align*} Therefore, $\mathfrak{h}_n$ is nilpotent and admits the stratification $\mathfrak{h}_n= V_1\oplus V_2$, where $V_1\doteq \Span\{X_j,Y_j\}_{1\leq j\leq n}$ and $V_2\doteq \Span\{\partial_\tau\}$. In other words, $\mathbf{H}_n$ is a 2 step stratified Lie group, whose homogeneous dimension is $Q=2n+2$.
The sub-Laplacian (also known as horizontal Laplacian) on $\mathbf{H}_n$ is defined as
\begin{align}
\Delta_{\hor} & \doteq \sum_{j=1}^n (X^2_j+Y_j^2)
= \Delta_{(x,y)}+4 |(x,y)|^2\partial_{\tau}^2   +4\sum_{j=1}^n\Big(y_j \,\partial_{ x_j \tau}^2-x_j \, \partial_{y_j\tau}^2\Big), \label{def sub laplacian}
\end{align} where $\Delta_{(x,y)}$ and $|(x,y)|$ denote the Laplace operator and the Euclidean norm of $(x,y)$ in $\mathbb{R}^{2n}$, respectively.

Moreover, it is possible to define a metric on $\mathbf{H}_n$. If we denote by $$|(x,y,\tau)|_{\mathbf{H}_n}\doteq \left(\left(|x|^2+|y|^2\right)^2+|\tau|^2\right)^{\frac{1}{4}}$$ the gauge function, then,
\begin{align*}
d(\eta,\zeta)=|\zeta^{-1}\circ\eta |_{\mathbf{H}_n}
\end{align*} is a left-invariant distance on $\mathbf{H}_n$. The gauge $|\cdot |_{\mathbf{H}_n}$ is homogeneous of degree 1 with respect to the family of group automorphisms $\{\delta_r\}_{r>0}$, where $\delta_r$ is the anisotropic dilation $$\delta_r(x,y,\tau)\doteq (rx,ry,r^2 \tau).$$ In particular, in our setting the gauge function satisfies the triangular inequality
\begin{align}\label{triangular ineq gauge}
|\eta \circ \zeta |_{\mathbf{H}_n}\leq |\eta |_{\mathbf{H}_n} +|\zeta |_{\mathbf{H}_n}.
\end{align}

In this paper, we deal with the semilinear Cauchy problem
\begin{align}\label{semilinear heat Heisenberg}
\begin{cases}
u_t-\Delta_{\hor} u= |u|^p, & \eta\in \mathbf{H}_n, \ t>0, \\
u(0,\eta)=\varepsilon u_0(\eta), & \eta\in \mathbf{H}_n,
\end{cases}
\end{align} where $p>1$ and $\varepsilon>0$ is a parameter describing the smallness of the data.

In the Euclidean case, namely, for the Cauchy problem
\begin{align}\label{semilinear heat Eucl}
\begin{cases}
u_t-\Delta u= |u|^p, & x\in \mathbb{R}^n, \ t>0, \\
u(0,x)=\varepsilon u_0(x), & x\in \mathbb{R}^n,
\end{cases}
\end{align}  it is well-known that the critical exponent is the Fujita exponent $$p_{\Fuj}(n)\doteq 1+\frac{2}{n}.$$ In the pioneering paper \cite{Fuj66} Fujita proved a global existence result for $p>p_{\Fuj}(n)$ and the nonexistence of global in time solutions under certain assumptions on the initial data for $1<p< p_{\Fuj}(n)$. Then, Hayakawa \cite{Hay73}, Sugitani \cite{Sug75} and Kobayashi-Sirao-Tanaka \cite{KST77} showed that in the critical case $p=p_{\Fuj}(n)$ it holds a blow-up result as well. In the work \cite{LN92} Lee-Ni determined, among other things, the sharp lifespan estimate of the lifespan for suitably decaying data. More precisely, they showed that the lifespan of local in time solutions to \eqref{semilinear heat Eucl} in the subcritical case behaves as follows
\begin{align*}
T_\varepsilon \simeq \begin{cases} C\varepsilon^{-(\frac{1}{p-1}-\frac{n}{2})^{-1}} & \mbox{if} \ \ 1<p<p_{\Fuj}(n), \\ \exp\big(C \varepsilon ^{-(p-1)}\big) & \mbox{if} \ \ p=p_{\Fuj}(n) \end{cases}.
\end{align*}

Our porpose is to show that $p_{\Fuj}(Q)= 1+\frac{2}{Q}$ is the critical exponent for \eqref{semilinear heat Heisenberg}. Therefore, we will prove both a blow-up result for \eqref{semilinear heat Heisenberg} in the subcritical case $1<p\leq p_{\Fuj}(Q)$ by using the so-called \emph{test function method} (cf. Mitidieri-Pohozaev \cite{MP01}, for example) and a global (in time) existence result for small data solutions in a suitable class of weighted $L^\infty(\mathbf{H}_n)$ spaces in the supercritical case $p>  p_{\Fuj}(Q)$. We point out that really recently Ruzhansky-Yessirkegenov found out in the more general frame of unimodular Lie groups with polynomial volume growth a critical exponent of Fujita-type for the Cauchy problem related to a semilinear  heat equation (where the degenerate sub-Laplacian appears instead of the classical Laplace operator for the Euclidian case in the definition of the heat operator). Nonetheless, their approach, which relies strongly on the semigroup property of the heat semigroup, differs from ours. Indeed, we obtain a global in time result in a different function space. Additionally, we derive the sharp lifespan estimates for local solutions in the subcritical case and in the critical case as well. In particular, for the upper bound estimate in the critical case we employ a technique which has been developed recently by Ikeda-Sobajima and Ikeda-Sobajima-Wakasa in \cite{IS18,IS17,ISW18}. For the lower bound estimate of the lifespan we prove a local in time existence result  in a weighted $L^\infty(\mathbf{H}_n)$ space, slightly modifying the approach for the global existence result of small data solutions in the supercritical case. We point out that the approach with weighted $L^\infty$ spaces is inspired by the tools used in the treatment of the Euclidean (and homogeneous) case in \cite{FGO18} by Fujiwara-Georgiev-Ozawa. In the next section we collect the main results of this paper.


\paragraph{Notations}

 Throughout this paper we will use the following notations: $B^k(R)$ denotes the ball in $\mathbb{R}^k$ around the origin with radius $R$; $f \lesssim g$ means that there exists a positive constant $C$ such that $f \leqslant Cg$ and, similarly, for $f\gtrsim g$; moreover, $f\approx g$ means $f\lesssim g$ and $f\gtrsim g$. Finally, we will consider the Lebesgue measure on $\mathbb{R}^{2n+1}$ (denoted by $d\eta$) as left-invariant Haar measure on $\mathbf{H}_n$.

\section{Main results} \label{Section main results}

In this section we state the main results that we are going to prove in the next sections.

We begin by introducing a suitable notion of weak solution for \eqref{semilinear heat Heisenberg}.

\begin{definition} A \emph{weak solution} of the Cauchy problem \eqref{semilinear heat Heisenberg} in $[0,T)\times \mathbf{H}_n$ is a function $u\in L^p_{\loc}([0,T)\times \mathbf{H}_n)$ that satisfies
\begin{align}
& \int_0^T \int_{\mathbf{H}_n}|u(t,\eta)|^p\varphi(t,\eta) \, d\eta \, dt+ \varepsilon \int_{\mathbf{H}_n}u_0(\eta)\varphi(0,\eta) \, d\eta  = -\int_0^T \int_{\mathbf{H}_n}u(t,\eta)(\partial_t +\Delta_{\hor}) \varphi(t,\eta) \, d\eta \, dt \label{def weak sol formula}
\end{align}
for any $\varphi \in \mathcal{C}_0^\infty([0,T)\times \mathbf{H}_n)$. If $T=\infty$, we call $u$ a \emph{global} in time weak solution to \eqref{semilinear heat Heisenberg}, else we call $u$ a \emph{local} in time weak solution.
\end{definition}

In the next result we provide an upper bound for the lifespan of a local in time solution $u$, which defined as follows
\begin{align*}
T(\varepsilon)\doteq \sup_{ T>0} \big\{u \ \mbox{is a weak solution to \eqref{semilinear heat Heisenberg} in} \ [0,T)\times \mathbf{H}_n\big\}.
\end{align*}

\begin{theorem}\label{Thm upper bound estimate lifespan} Let $1<p\leq p_{\Fuj}(Q)$. We assume that $u_0\in L^1(\mathbf{H}_n)$ satisfies
\begin{equation} \int_{\mathbf{H}_n} u_0(\eta) \, d\eta >0, \label{assumption intial data}
\end{equation}
and is compactly supported with  $\supp u_0 \subset  \{(x,y,\tau)\in \mathbf{H}_n: |x|^2+|y|^2+|\tau|<R_0\}$ for some $R_0>0$ . Then, there exists $\varepsilon_0>0$ such that for any $\varepsilon\in (0,\varepsilon_0]$ it holds
\begin{align}\label{upper bound lifespan}
T(\varepsilon) \leq
 \begin{cases} C \varepsilon^{-\left(\frac{1}{p-1}-\frac{Q}{2}\right)^{-1}} & \mbox{if} \ \ p\in (1,p_{\Fuj}(Q)), \\
 \exp\big(C \varepsilon^{-(p-1)} \big) & \mbox{if} \ \ p=p_{\Fuj}(Q),
\end{cases}
\end{align} where $C$ is independent of $\varepsilon$ and positive constant.
\end{theorem}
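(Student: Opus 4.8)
The plan is to use the rescaled test function method of Ikeda--Sobajima. I would start from the weak-solution identity \eqref{def weak sol formula}, choosing a test function of the separated form $\varphi(t,\eta)=\phi_R(t)^{2p'}\,\psi_R(\eta)$ (here $p'$ is the conjugate exponent of $p$), where $\phi_R(t)=\phi(t/R^2)$ is a smooth cut-off in time, equal to $1$ on $[0,1]$ and supported in $[0,2]$ say, and $\psi_R(\eta)=\psi(|\eta|_{\mathbf H_n}/R)$ is a smooth spatial cut-off adapted to the gauge $|\cdot|_{\mathbf H_n}$. The parabolic scaling $t\sim R^2$ against $\eta\sim R$ is forced by the fact that $\Delta_\hor$ is homogeneous of degree $2$ with respect to $\{\delta_r\}$, and the homogeneous dimension $Q=2n+2$ controls the volume: $|\{|\eta|_{\mathbf H_n}<R\}|\approx R^Q$. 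Applying \eqref{def weak sol formula} with this $\varphi$ and using $u_0\geq 0$ in an averaged sense from \eqref{assumption intial data} (for $R$ large the support of $u_0$ lies in the region where $\varphi(0,\cdot)=1$), I get
\begin{align*}
\int_0^\infty\!\!\int_{\mathbf{H}_n}|u|^p\varphi\,d\eta\,dt+c\,\varepsilon
\leq \Big|\int_0^\infty\!\!\int_{\mathbf{H}_n}u\,\partial_t\varphi\,d\eta\,dt\Big|
+\Big|\int_0^\infty\!\!\int_{\mathbf{H}_n}u\,\Delta_\hor\varphi\,d\eta\,dt\Big|,
\end{align*}
with $c=\int_{\mathbf H_n}u_0\,d\eta>0$.

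Next I would bound each term on the right by Hölder's inequality with exponents $p$ and $p'$, pulling out $|u|^p\varphi$ to absorb into the left-hand side. The point of the power $2p'$ on the time cut-off (and an analogous exponent on the spatial one) is that $|\partial_t\varphi|\leq C\,\varphi^{1/p}\,R^{-2}\,\mathbf 1_{\{R^2\le t\le 2R^2\}}$ and $|\Delta_\hor\varphi|\leq C\,\varphi^{1/p}\,R^{-2}\,\mathbf 1_{\{R\le|\eta|_{\mathbf H_n}\le 2R\}}$, the latter using \eqref{def sub laplacian} together with the fact that $\psi_R$ depends only on the gauge and the gauge is $\delta_r$-homogeneous of degree $1$, so each application of a horizontal vector field costs one power of $R^{-1}$. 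After Hölder and Young's inequality the estimate becomes
\begin{align*}
\tfrac12\int_0^\infty\!\!\int_{\mathbf{H}_n}|u|^p\varphi\,d\eta\,dt+c\,\varepsilon
\leq C\,R^{-2p'}\,\big|\{(t,\eta): R^2\le t\le 2R^2\ \text{or}\ R\le|\eta|_{\mathbf H_n}\le 2R\}\big|
\leq C\,R^{Q+2-2p'},
\end{align*}
since the relevant space-time region has measure $\approx R^{Q+2}$.

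Now the case split. When $p<p_{\Fuj}(Q)$ one checks $Q+2-2p'<0$, i.e. the exponent of $R$ is negative; letting $R\to\infty$ forces $c\,\varepsilon\le 0$, contradicting $c>0$ — hence no global solution, and more quantitatively, keeping $R$ finite and optimizing, $c\,\varepsilon\le C R^{Q+2-2p'}$ is violated once $R^2\gtrsim \varepsilon^{-(\frac1{p-1}-\frac Q2)^{-1}}$, which (since a solution on $[0,T)$ supplies admissible $R$ with $R^2\lesssim T$) yields the claimed bound $T(\varepsilon)\le C\varepsilon^{-(\frac1{p-1}-\frac Q2)^{-1}}$. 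In the critical case $p=p_{\Fuj}(Q)$ one has exactly $Q+2-2p'=0$, so the crude bound only gives $c\,\varepsilon\le C$, which is not enough. Here I would run the refined Ikeda--Sobajima argument: instead of bounding $|u|^p\varphi$ trivially, keep the Hölder estimate localized to the annular regions $\{R^2\le t\le 2R^2\}$ and $\{R\le |\eta|_{\mathbf H_n}\le 2R\}$, denote by $\mathcal I(R)$ the quantity $\int\!\!\int_{\text{annulus}}|u|^p\varphi$, and derive a differential/iterative inequality for $Y(R):=\int_1^{R}\mathcal I(\rho)\,\tfrac{d\rho}{\rho}$ of the form $c\,\varepsilon\le C\,Y(R)^{1/p}$ combined with $\mathcal I$ being essentially $R\,Y'(R)$; this produces $Y(R)\gtrsim (\log R)^{p'}$-type growth, equivalently $(c\varepsilon)^{p'}\log R\lesssim 1$, giving $\log R\lesssim \varepsilon^{-(p-1)}$ and hence $T(\varepsilon)\le \exp(C\varepsilon^{-(p-1)})$. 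The main obstacle is precisely this critical-case bookkeeping: one must choose the logarithmic weight and the slicing correctly so that the annular Hölder losses telescope, and one must be careful that the horizontal-gradient terms in $\Delta_\hor$ acting on a gauge-radial cut-off genuinely obey the one-power-of-$R^{-1}$ bound uniformly — this is where the homogeneity of $|\cdot|_{\mathbf H_n}$ under $\delta_r$ and the triangular inequality \eqref{triangular ineq gauge} are used. Everything else (the reduction to nonnegative contribution from the data, the Hölder/Young manipulations, passing from a solution on $[0,T)$ to the constraint on $R$) is routine.
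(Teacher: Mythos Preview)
Your overall strategy---the Ikeda--Sobajima refinement of the test function method---is exactly what the paper does, and your subcritical sketch is correct (it coincides, up to the reparametrization $R\leftrightarrow R^{2}$, with the argument in the paper's Remark following Proposition~\ref{Prop test function method}).

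The critical case, however, has a real gap. The step you describe as ``the annular H\"older losses telescope'' is the crux, and your sketch of it is muddled. What is actually needed is an inequality of the form $\int_0^R \mathcal I(r)\,\tfrac{dr}{r}\le C\,X(R)$, where $X(R)=\iint |u|^p\varphi_R$ is the \emph{full} weighted integral; the paper isolates this as a separate lemma (Lemma~\ref{lemma g}), proved from the monotonicity and compact support of the cutoff. Combining that lemma with the H\"older estimate $X(R)+c\varepsilon\lesssim \mathcal I(R)^{1/p}$ and writing $W(R)=\int_0^R\mathcal I(r)\,\tfrac{dr}{r}$, one obtains $(c\varepsilon+C^{-1}W)^p\lesssim R\,W'$; integrating $W'/(W+c\varepsilon)^p\gtrsim 1/R$ over $[R_0,T]$ then gives $\varepsilon^{1-p}\gtrsim\log T$. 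Your version instead states ``$c\varepsilon\le C\,Y(R)^{1/p}$'' (the H\"older bound involves $\mathcal I$, not its $\tfrac{dr}{r}$-integral $Y$, and the missing $X$ on the left is precisely what the lemma has to supply) and ``$(c\varepsilon)^{p'}\log R\lesssim 1$'' (this would yield $\log R\lesssim\varepsilon^{-p'}$ with $p'=p/(p-1)\ne p-1$); the correct final exponent $\varepsilon^{-(p-1)}$ appears only by fiat. A secondary point: the paper uses a \emph{single} space-time cutoff $\psi_R=[\phi((t^2+|x|^4+|y|^4+\tau^2)/R^2)]^{2p'}$ rather than your separated product, so that all derivative supports lie in one annulus $\{\tfrac12\le s_R\le 1\}$ and Lemma~\ref{lemma g} becomes a one-variable computation. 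With your separated $\phi_R(t)\psi_R(\eta)$ the supports of $\partial_t\varphi$ and $\Delta_\hor\varphi$ are different annuli, so you would have to run the lemma separately in $t$ and in $\eta$ and track two coupled quantities---feasible, but not the ``routine'' bookkeeping you claim.
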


We introduce now the definition of the weighted $L^\infty$ spaces, where we will study the existence and the uniqueness results for the Cauchy problem \eqref{semilinear heat Heisenberg}. Let $\kappa>0$ be a parameter. Then, we define
\begin{align*}
\mathrm{X}_{\kappa,T} & \doteq   \big(1+t+|\eta|^2_{\mathbf{H}_n}\big)^{-\frac{\kappa}{2}} L^\infty\big(0,T\, ;L^\infty(\mathbf{H}_n) \big),  \\
\mathrm{X}_{\kappa} & \doteq  \big(1+t+|\eta|^2_{\mathbf{H}_n}\big)^{-\frac{\kappa}{2}} L^\infty\big(0,\infty\, ;L^\infty(\mathbf{H}_n) \big),
\end{align*}  equipped with the norms
\begin{align*}
\| u\|_{\mathrm{X}_{\kappa,T}} & \doteq \big\| \big(1+t+|\eta|^2_{\mathbf{H}_n}\big)^{\frac{\kappa}{2}} u(t,\eta)\big\|_{L^\infty
([0,T)\times \mathbf{H}_n)} , \\
\| u\|_{\mathrm{X}_\kappa} & \doteq \big\| \big(1+t+|\eta|^2_{\mathbf{H}_n}\big)^{\frac{\kappa}{2}} u(t,\eta)\big\|_{L^\infty
([0,\infty)\times \mathbf{H}_n)},
\end{align*} respectively. For the existence results (either global or local in time) we will consider mild solutions to \eqref{semilinear heat Heisenberg}. Therefore, let us recall the definition of mild solution in the next definition.

\begin{definition}\label{Def mild sol} Let $\kappa$ be a positive real number. A \emph{mild solution} of the Cauchy problem \eqref{semilinear heat Heisenberg} in $\mathrm{X}_{\kappa,T}$ is a function $u\in \mathrm{X}_{\kappa,T}$ that satisfies the nonlinear integral equation
\begin{align} \label{nonlinear integral equation}
u(t)= \varepsilon \, e^{t\Delta_{\hor}} u_0+\int_0^t e^{(t-s)\Delta_{\hor}}|u(s)|^p \, ds
\end{align} for any $t\in [0,T)$.  If $T=\infty$, we call $u$ a \emph{global} in time mild solution to \eqref{semilinear heat Heisenberg}, else we call $u$ a \emph{local} in time mild solution.
\end{definition}

Finally, we may state the global in time existence result for small data solutions in the supercritical case in the family of weighted function spaces $\{\mathrm{X}_{\kappa}\}_{\kappa\in (0,Q)}$ and the local in time existence result in the subcritical and critical case in the weighted space $\mathrm{X}_{Q,T}$.

\begin{theorem} \label{Thm GESD} Let us assume $p>p_{\Fuj}(Q)$. Let us consider $\kappa= \frac{2}{p-1}\in (0,Q)$. Then, there exists $\varepsilon_0=\varepsilon_0(n,p,u_0)>0$ such that for any $\varepsilon\in (0,\varepsilon_0]$ and $u_0\in  (1+|\cdot|_{\mathbf{H}_n}^2)^{-\frac{\kappa}{2}} L^\infty(\mathbf{H}_n)$ there exists a unique global in time mild solution $u$ to \eqref{semilinear heat Heisenberg} in the weighted $L^{\infty}$ space $\mathrm{X}_\kappa$. Furthermore, $u$ satisfies the decay estimate
\begin{align*}
|u(t,\eta)|\lesssim \varepsilon \,  \| \,(1+|\cdot|_{\mathbf{H}_n}^2)^{\frac{\kappa}{2}} u_0\|_{L^\infty(\mathbf{H}_n)} \big(1+t+|\eta|^2_{\mathbf{H}_n}\big)^{-\frac{\kappa}{2}} \qquad \mbox{for any} \ (t,\eta)\in [0,\infty)\times \mathbf{H}_n.
\end{align*}
\end{theorem}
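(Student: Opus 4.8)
The plan is to construct the global mild solution by a contraction-mapping argument in the weighted space $\mathrm{X}_\kappa$ with $\kappa=\frac{2}{p-1}$. The crucial analytic input is a pointwise (weighted $L^\infty$) decay estimate for the heat semigroup generated by $\Delta_{\hor}$ on $\mathbf{H}_n$. Concretely, I would first establish that for $0<\mu<Q$ there is a constant $C_\mu>0$ with
\begin{align*}
\big(e^{t\Delta_{\hor}} g\big)(\eta) \leq C_\mu \,\big(1+t+|\eta|^2_{\mathbf{H}_n}\big)^{-\frac{\mu}{2}}\,\big\|(1+|\cdot|^2_{\mathbf{H}_n})^{\frac{\mu}{2}} g\big\|_{L^\infty(\mathbf{H}_n)}
\end{align*}
for all nonnegative $g$ and all $t>0$. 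This rests on the heat kernel $p_t(\eta)$ on the Heisenberg group being given by $p_t(\eta)=t^{-Q/2}p_1(\delta_{1/\sqrt t}\eta)$ by homogeneity, together with the two-sided Gaussian-type bounds $p_1(\eta)\approx \exp(-c|\eta|^2_{\mathbf{H}_n})$ (which follow from the sub-elliptic heat kernel estimates of, e.g., Jerison–Sánchez-Calle / Varopoulos–Saloff-Coste–Coulhon, or can be quoted from the references already used for the Fujita critical exponent on $\mathbf{H}_n$); then the weighted bound follows from splitting the convolution integral into the regions $\{d(\eta,\zeta)\le \tfrac12|\eta|_{\mathbf{H}_n}\}$ and its complement, using the triangle inequality \eqref{triangular ineq gauge} on the gauge, exactly as in the Euclidean heat-kernel computation. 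The presence of the extra $t$ inside the weight $(1+t+|\eta|^2_{\mathbf{H}_n})$ is handled by the $t^{-Q/2}$ scaling: the gain of $t^{-\mu/2}$ from the kernel absorbs the weight $(1+t+\dots)^{-\mu/2}$ on the relevant range.

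Next I would set up the nonlinear map $\Phi(u)(t) \doteq \varepsilon\, e^{t\Delta_{\hor}}u_0+\int_0^t e^{(t-s)\Delta_{\hor}}|u(s)|^p\,ds$ and show it is a contraction on a closed ball of $\mathrm{X}_\kappa$. The linear term is controlled directly by the weighted semigroup estimate with $\mu=\kappa$, giving $\|\varepsilon\, e^{t\Delta_{\hor}}u_0\|_{\mathrm{X}_\kappa}\lesssim \varepsilon\|(1+|\cdot|^2_{\mathbf{H}_n})^{\kappa/2}u_0\|_{L^\infty}$. For the Duhamel term, if $u\in\mathrm{X}_\kappa$ then $|u(s,\zeta)|^p\le \|u\|_{\mathrm{X}_\kappa}^p(1+s+|\zeta|^2_{\mathbf{H}_n})^{-p\kappa/2}$, and since $\kappa=\frac{2}{p-1}$ we have $p\kappa = \kappa+2$, so I would apply the semigroup estimate with weight parameter $\mu=\kappa$ to the function $g_s(\zeta)=(1+s+|\zeta|^2_{\mathbf{H}_n})^{-1}(1+s+|\zeta|^2_{\mathbf{H}_n})^{-\kappa/2}$, keeping the extra factor $(1+s+\dots)^{-1}$ aside. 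This reduces matters to the scalar time integral
\begin{align*}
\int_0^t (t-s+1+|\eta|^2_{\mathbf{H}_n})^{-\frac{\kappa}{2}}\,(1+s)^{-1}\,ds \lesssim (1+t+|\eta|^2_{\mathbf{H}_n})^{-\frac{\kappa}{2}},
\end{align*}
where the convergence of the integral — hence the gain — uses precisely $p>p_{\Fuj}(Q)$, equivalently $\kappa<Q$ together with the integrability of $(1+s)^{-1}$ against the decaying kernel after splitting $[0,t]$ at $t/2$. This yields $\|\Phi(u)\|_{\mathrm{X}_\kappa}\le C_0\varepsilon\|u_0\|_* + C_1\|u\|_{\mathrm{X}_\kappa}^p$, and the Lipschitz estimate $\||u|^p-|v|^p|\lesssim (|u|^{p-1}+|v|^{p-1})|u-v|$ gives $\|\Phi(u)-\Phi(v)\|_{\mathrm{X}_\kappa}\le C_1(\|u\|_{\mathrm{X}_\kappa}^{p-1}+\|v\|_{\mathrm{X}_\kappa}^{p-1})\|u-v\|_{\mathrm{X}_\kappa}$ by the same argument. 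Choosing the ball of radius $R=2C_0\varepsilon\|u_0\|_*$ and $\varepsilon_0$ small enough that $C_1 R^{p-1}\le \tfrac12$, Banach's fixed point theorem produces the unique mild solution, and the decay estimate in the statement is exactly the bound $|u(t,\eta)|\le R(1+t+|\eta|^2_{\mathbf{H}_n})^{-\kappa/2}$.

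I expect the main obstacle to be the first step: proving the weighted $L^\infty\to L^\infty$ smoothing/decay estimate for $e^{t\Delta_{\hor}}$ on $\mathbf{H}_n$ with the correct constants, uniformly in $t$. Unlike the Euclidean case, one cannot write the heat kernel in closed elementary form convenient for this bound (Gaveau's formula for $p_1$ is an oscillatory/contour integral), so one must either invoke the sharp Gaussian upper and lower bounds for sub-Laplacian heat kernels on stratified groups and carry out the convolution splitting carefully with respect to the gauge metric and its doubling volume property, or adapt the resolvent/self-similar-profile approach. The interplay of the parabolic scaling $\delta_{\sqrt t}$ with the weight $(1+t+|\eta|^2_{\mathbf{H}_n})^{\kappa/2}$ — in particular checking that no logarithmic loss appears at the threshold regions $|\eta|_{\mathbf{H}_n}\sim \sqrt t$ — is the technically delicate point; once that estimate is in hand, the contraction argument is routine and mirrors the Euclidean weighted-$L^\infty$ scheme of Fujiwara–Georgiev–Ozawa cited in the introduction.
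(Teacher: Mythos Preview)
Your overall strategy---contraction on a ball of $\mathrm{X}_\kappa$ using a weighted $L^\infty$ semigroup bound---is exactly what the paper does, and your treatment of the linear piece $\varepsilon\, e^{t\Delta_{\hor}}u_0$ is fine. The gap is in your Duhamel estimate: the scalar time integral you reduce to,
\[
\int_0^t (1+t-s+|\eta|^2_{\mathbf{H}_n})^{-\frac{\kappa}{2}}\,(1+s)^{-1}\,ds,
\]
is \emph{not} bounded by $(1+t+|\eta|^2_{\mathbf{H}_n})^{-\kappa/2}$ uniformly in $t$. Take $\eta=0$ and look at $s\in[0,t/2]$: there $(1+t-s)^{-\kappa/2}\simeq(1+t)^{-\kappa/2}$, so this part contributes $\simeq (1+t)^{-\kappa/2}\log(1+t)$, a logarithmic loss that destroys the contraction. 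The loss is created precisely by your step of ``keeping the extra factor $(1+s+|\zeta|^2_{\mathbf{H}_n})^{-1}$ aside'' and replacing it by $(1+s)^{-1}$; you throw away the spatial decay in that factor, and once gone it cannot be recovered by the remaining $\kappa$-weighted semigroup bound. Applying the semigroup bound instead with $\mu=\kappa+2$ is no cure either, since that would need $\kappa+2<Q$, strictly stronger than $p>p_{\Fuj}(Q)$.

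The paper avoids this by \emph{not} separating the weight. It proves directly (its Proposition on the Duhamel term) that for $\alpha\in(0,1+\tfrac{Q}{2})$,
\[
\int_0^t e^{(t-s)\Delta_{\hor}}\big(1+s+|\cdot|^2_{\mathbf{H}_n}\big)^{-\alpha}\,ds\,(\eta)\ \lesssim\ t\,(1+t+|\eta|^2_{\mathbf{H}_n})^{-\alpha},
\]
and then applies this with $\alpha=\tfrac{\kappa p}{2}=\tfrac{\kappa}{2}+1$, absorbing the factor $t$ via $t\leq 1+t+|\eta|^2_{\mathbf{H}_n}$. The condition $\alpha<1+\tfrac{Q}{2}$ is exactly $\kappa<Q$, i.e.\ $p>p_{\Fuj}(Q)$, and this is where the supercriticality is genuinely used. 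The proof of that estimate treats the full space--time convolution at once (splitting $s$ at $t/2$ and $\zeta$ into gauge annuli, then performing a change of variables $s=\sigma^2$, $\tau'=\omega^2$ to reduce to a radial integral in $Q$ effective dimensions), so that the ``extra'' decay in $\zeta$ and the integration in $s$ cooperate. Your scheme decouples them and that is what produces the spurious logarithm.
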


\begin{theorem}\label{Thm LER} Let us assume $1<p\leq p_{\Fuj}(Q)$. Let us consider $\kappa >Q$. Then, for any $u_0\in (1+ |\cdot|_{\mathbf{H}_n}^2)^{-\frac{\kappa}{2}} L^\infty(\mathbf{H}_n)$ there exists a unique local in time mild solution $u$ to \eqref{semilinear heat Heisenberg} on $[0,T_\varepsilon)$ in the weighted $L^{\infty}$ space $\mathrm{X}_{Q,T_\varepsilon}$. Furthermore, the following lower bound estimate for the lifespan of $u$ holds:
\begin{align}\label{lower bound lifespan}
T_\varepsilon\geq \begin{cases}C  \varepsilon^{-(\frac{1}{p-1}-\frac{Q}{2})^{-1}} & \mbox{if} \ \ p<p_{\Fuj}(Q), \\ \exp\big( C  \varepsilon^{-(p-1)} \big) & \mbox{if} \ \ p=p_{\Fuj}(Q), \end{cases}
\end{align} where $C=C(Q,p)$ is a positive and independent of $\varepsilon$ constant.
\end{theorem}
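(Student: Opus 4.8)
The plan is to obtain the local solution as a fixed point of the Duhamel map
\[
\Phi(u)(t)\doteq \varepsilon\,e^{t\Delta_{\hor}}u_0+\int_0^t e^{(t-s)\Delta_{\hor}}|u(s)|^p\,ds
\]
(cf. \eqref{nonlinear integral equation}) on a closed ball of $\mathrm{X}_{Q,T}$, with $T$ taken as large as the estimates allow; the threshold value of $T$ will then yield the lower bound \eqref{lower bound lifespan}. The facts about the heat semigroup on $\mathbf{H}_n$ we shall use are that $e^{t\Delta_{\hor}}$ is a convolution operator $e^{t\Delta_{\hor}}f=h_t*f$ whose kernel $h_t$ is nonnegative, has unit mass $\int_{\mathbf{H}_n}h_t\,d\eta=1$, obeys the self-similar scaling $h_t(\eta)=t^{-Q/2}h_1(\delta_{t^{-1/2}}\eta)$ and a Gaussian bound $h_t(\eta)\lesssim t^{-Q/2}\exp(-c\,|\eta|_{\mathbf{H}_n}^2/t)$; in particular $e^{t\Delta_{\hor}}$ is a contraction on $L^\infty(\mathbf{H}_n)$ and is positivity preserving. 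Combined with the $Q$-dimensional volume growth $|\{|\eta|_{\mathbf{H}_n}\le\rho\}|\approx\rho^Q$ and the quasi-triangle inequality \eqref{triangular ineq gauge}, these are precisely what is needed to propagate the weight $\langle s,\eta\rangle\doteq(1+s+|\eta|_{\mathbf{H}_n}^2)^{1/2}$ through convolutions.

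The argument rests on two weighted $L^\infty$ estimates. The linear one reads
\[
\big\|\langle t,\cdot\rangle^{Q}\,e^{t\Delta_{\hor}}u_0\big\|_{L^\infty([0,T)\times\mathbf{H}_n)}\lesssim \big\|(1+|\cdot|_{\mathbf{H}_n}^2)^{\kappa/2}u_0\big\|_{L^\infty(\mathbf{H}_n)},
\]
and it is here that $\kappa>Q$ is used: a datum decaying like $\langle 0,\cdot\rangle^{-\kappa}$ with $\kappa>Q$ is integrable and is mapped into a function decaying like $\langle t,\cdot\rangle^{-Q}$ uniformly in $t$, whereas for $\kappa=Q$ the borderline divergence of the spatial integral would produce a logarithmic loss. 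The decisive, nonlinear estimate controls the Duhamel term: since $|u(s,\eta)|^p\le\|u\|_{\mathrm{X}_{Q,T}}^p\langle s,\eta\rangle^{-pQ}$ for $u\in\mathrm{X}_{Q,T}$, one needs
\[
\sup_{0\le t<T}\ \Big\|\langle t,\cdot\rangle^{Q}\int_0^t e^{(t-s)\Delta_{\hor}}\big[\langle s,\cdot\rangle^{-pQ}\big]\,ds\Big\|_{L^\infty(\mathbf{H}_n)}\lesssim \mathcal{D}_p(T),
\]
with $\mathcal{D}_p(T)\doteq(1+T)^{1-Q(p-1)/2}$ if $1<p<p_{\Fuj}(Q)$ and $\mathcal{D}_p(T)\doteq\log(e+T)$ if $p=p_{\Fuj}(Q)$. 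The exponent $1-\tfrac{Q(p-1)}{2}$ is positive precisely in the subcritical range and vanishes at $p=p_{\Fuj}(Q)$, which is why the polynomial growth degenerates into a logarithm exactly at the Fujita exponent.

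To prove this key estimate I would split $\int_0^t ds$ at $s=t/2$ and, for each $s$, split $\mathbf{H}_n$ into the near region $|\eta|_{\mathbf{H}_n}^2\lesssim t$ and the far region $|\eta|_{\mathbf{H}_n}^2\gtrsim t$. In the far region the Gaussian factor in $h_{t-s}(\zeta^{-1}\circ\eta)$ forces $\langle s,\zeta\rangle\approx\langle s,\eta\rangle\approx|\eta|_{\mathbf{H}_n}$ on its effective support, so the contribution is $\lesssim |\eta|_{\mathbf{H}_n}^{-pQ}\int_0^t ds\approx t\,|\eta|_{\mathbf{H}_n}^{-pQ}\lesssim\mathcal{D}_p(t)\,|\eta|_{\mathbf{H}_n}^{-Q}$, using $|\eta|_{\mathbf{H}_n}^2\gtrsim t$. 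In the near region, for $s>t/2$ one uses that $e^{(t-s)\Delta_{\hor}}$ is an $L^\infty$-contraction together with $\|\langle s,\cdot\rangle^{-pQ}\|_{L^\infty(\mathbf{H}_n)}\approx(1+s)^{-pQ/2}$, while for $s<t/2$ one uses $\|h_{t-s}\|_{L^\infty(\mathbf{H}_n)}\lesssim t^{-Q/2}$ and $\|\langle s,\cdot\rangle^{-pQ}\|_{L^1(\mathbf{H}_n)}\approx(1+s)^{-Q(p-1)/2}$ (the $L^1$ norm being finite since $p>1$, and evaluated via the volume growth); integrating $(1+s)^{-Q(p-1)/2}$ over $(0,t/2)$ produces a constant, a logarithm, or $(1+t)^{1-Q(p-1)/2}$ according as $Q(p-1)/2$ is $>1$, $=1$, or $<1$. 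Keeping this $t$-dependence sharp—especially the logarithm at $p=p_{\Fuj}(Q)$—is the main technical obstacle; by contrast the non-commutativity of $\mathbf{H}_n$ is essentially irrelevant here, since only the gauge quasi-norm, \eqref{triangular ineq gauge}, the volume growth and the Gaussian bound are invoked.

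With both estimates in hand, the contraction argument is standard. Set $M\doteq 2C\varepsilon\|(1+|\cdot|_{\mathbf{H}_n}^2)^{\kappa/2}u_0\|_{L^\infty(\mathbf{H}_n)}$ and work on $\{u\in\mathrm{X}_{Q,T}:\|u\|_{\mathrm{X}_{Q,T}}\le M\}$; using $\big||a|^p-|b|^p\big|\lesssim(|a|^{p-1}+|b|^{p-1})|a-b|$ one obtains $\|\Phi(u)\|_{\mathrm{X}_{Q,T}}\le\tfrac M2+C M^p\mathcal{D}_p(T)$ and $\|\Phi(u)-\Phi(v)\|_{\mathrm{X}_{Q,T}}\le C M^{p-1}\mathcal{D}_p(T)\|u-v\|_{\mathrm{X}_{Q,T}}$, so $\Phi$ is a contraction on the ball as soon as $C M^{p-1}\mathcal{D}_p(T)\le\tfrac12$, and Banach's fixed point theorem yields the unique mild solution in $\mathrm{X}_{Q,T}$ (uniqueness in all of $\mathrm{X}_{Q,T_\varepsilon}$ then follows from a Gronwall estimate on short subintervals plus continuation). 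Since $M^{p-1}\approx\varepsilon^{p-1}$, the smallness condition is satisfied whenever $\mathcal{D}_p(T)\le c\,\varepsilon^{-(p-1)}$; solving $\mathcal{D}_p(T)=c\,\varepsilon^{-(p-1)}$ gives $T\approx\varepsilon^{-(\frac1{p-1}-\frac Q2)^{-1}}$ in the subcritical case and $T\approx\exp(c\,\varepsilon^{-(p-1)})$ in the critical case, which is exactly \eqref{lower bound lifespan}.
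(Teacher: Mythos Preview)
Your proposal is correct and follows essentially the same route as the paper: a contraction-mapping argument for $\Phi$ on a ball in $\mathrm{X}_{Q,T}$, based on the linear weighted estimate (the paper's Proposition~\ref{Prop fund estimate linear term inhom case}, where $\kappa>Q$ is used) and the Duhamel estimate with growth factor $\mathcal{D}_p(T)=T^{1-Q(p-1)/2}$ or $\log(e+T)$ (the paper's Proposition~\ref{Prop fund estimate Duhamel term inhom} applied with $\alpha=\tfrac{Qp}{2}\le 1+\tfrac{Q}{2}$). Your decomposition for the Duhamel estimate---splitting at $s=t/2$ and into near/far regions in $\eta$, then using $L^\infty$--$L^\infty$ on $(t/2,t)$ and $L^\infty$--$L^1$ on $(0,t/2)$---is a minor variant of the paper's splitting (which instead partitions the integration variable $\zeta$ relative to $\eta$), but the ingredients and the resulting bounds are the same.
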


\section{Blow-up results}

\subsection{Test function method}

%


In this subsection, we prove a result which is already known in the literature (for example, see \cite[Theorem 3.1]{PV00}). Nevertheless, since we are going to modify this approach (the \emph{test function method}) in order to derive the upper bound estimate for the lifespan in the subcritical case, for the sake of self-containedness and readability of the paper we include briefly its proof.

\begin{proposition} \label{Prop test function method} Let $1<p\leq p_{\Fuj}(Q)$, where $Q=2n+2$ is the homogeneous dimension of $\mathbf{H}_n$. If we assume that $u_0\in L^1(\mathbf{H}_n)$ satisfies
\begin{equation} \liminf_{R\to \infty}\int_{D_R} u_0(\eta) \, d\eta >0, \label{assumption intial data TFM}
\end{equation} where $D_R\doteq B^n(R)\times B^n(R)\times [-R^2,R^2]$, then, there exists no global in time weak solution to \eqref{semilinear heat Heisenberg}.
\end{proposition}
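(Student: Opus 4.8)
The plan is to run the classical test function method (Mitidieri–Pohozaev style) adapted to the Heisenberg setting. Suppose for contradiction that $u$ is a global in time weak solution. I would fix a smooth cutoff $\phi\in\mathcal{C}_0^\infty([0,\infty))$ with $0\le\phi\le1$, $\phi\equiv1$ on $[0,1/2]$ and $\supp\phi\subset[0,1)$, and for $R>0$ build the test function $\varphi_R(t,\eta)\doteq\phi_R(t,\eta)^{2p'}$ (with $p'$ the conjugate exponent), where $\phi_R(t,\eta)\doteq\phi\big(R^{-2}(t+|\eta|_{\mathbf{H}_n}^2)\big)$ or, essentially equivalently, a product $\phi(t/R^2)\,\phi(|\eta|_{\mathbf{H}_n}^4/R^4)$; the key feature is that $\varphi_R$ is supported in a parabolic-type cylinder of "size" $R$ in the anisotropic scaling, on which $d\eta\,dt$ has measure $\approx R^{Q+2}$. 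Plugging $\varphi_R$ into the weak formulation \eqref{def weak sol formula} gives
\[
\int_0^\infty\!\!\int_{\mathbf{H}_n}|u|^p\varphi_R\,d\eta\,dt+\varepsilon\int_{\mathbf{H}_n}u_0\,\varphi_R(0,\cdot)\,d\eta=-\int_0^\infty\!\!\int_{\mathbf{H}_n}u\,(\partial_t+\Delta_{\hor})\varphi_R\,d\eta\,dt.
\]

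Next I would estimate the right-hand side by Hölder's inequality with exponents $p$ and $p'$, absorbing a factor $\varphi_R^{1/p}$: the RHS is bounded by $\big(\int|u|^p\varphi_R\big)^{1/p}\big(\int|(\partial_t+\Delta_{\hor})\varphi_R|^{p'}\varphi_R^{-p'/p}\big)^{1/p'}$. The crucial computation is the scaling of the second factor. Because $\varphi_R$ is a high power of a function of $R^{-2}(t+|\eta|_{\mathbf{H}_n}^2)$, each $\partial_t$ costs $R^{-2}$, and — by homogeneity of $\Delta_{\hor}$ of degree $-2$ under $\delta_r$, i.e. $\Delta_{\hor}(f\circ\delta_R)=R^{-2}(\Delta_{\hor}f)\circ\delta_R$ — each application of $\Delta_{\hor}$ also costs $R^{-2}$; the chosen power $2p'$ ensures $\varphi_R^{-p'/p}|(\partial_t+\Delta_{\hor})\varphi_R|^{p'}$ stays bounded (no negative powers blow up) and scales like $R^{-2p'}$. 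Changing variables $\eta=\delta_R(\eta')$, $t=R^2t'$ (which has Jacobian $R^{Q}$ in space, $R^2$ in time, total $R^{Q+2}$), the second factor is $\big(R^{-2p'}\cdot R^{Q+2}\big)^{1/p'}=C\,R^{(Q+2)/p'-2}=C\,R^{Q+2-(Q+2)/p-2}=C\,R^{Q-(Q+2)/p}$.

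Then I would set $I_R\doteq\int_0^\infty\int_{\mathbf{H}_n}|u|^p\varphi_R\,d\eta\,dt$. For $R$ large, the support condition on $u_0$ and assumption \eqref{assumption intial data TFM} give $\varepsilon\int u_0\varphi_R(0,\cdot)\,d\eta\ge c>0$, so the identity yields $I_R+c\le I_R^{1/p}\,C\,R^{Q-(Q+2)/p}$. In the subcritical case $1<p<p_{\Fuj}(Q)$ the exponent $Q-(Q+2)/p<0$, so letting $R\to\infty$ forces $I_R^{1-1/p}\le C R^{Q-(Q+2)/p}\to0$, hence $I_R\to0$, which contradicts $I_R+c\le\cdots$ with $c>0$. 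In the critical case $p=p_{\Fuj}(Q)$ the exponent is exactly $0$, so the crude bound only gives $I_R$ bounded; here I would refine the estimate by restricting the "error" integral to the region where the derivatives of $\varphi_R$ are actually nonzero — i.e. $t+|\eta|_{\mathbf{H}_n}^2\in[R^2/2,R^2]$ — so the second Hölder factor is $\big(\int_{A_R}|u|^p\varphi_R\big)^{1/p}$ times a bounded constant, where $A_R$ is that annular region. Since $I_\infty\doteq\lim_R I_R<\infty$ (it is monotone and bounded), $\int_{A_R}|u|^p\varphi_R\to0$ as $R\to\infty$ by dominated convergence / absolute continuity of the integral, and again the right-hand side tends to $0$, contradicting $c>0$.

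The main obstacle is making the scaling argument for the operator $\Delta_{\hor}$ fully rigorous: one must verify that $(\partial_t+\Delta_{\hor})$ applied to $\varphi_R$ really does scale as $R^{-2}$ uniformly, using the anisotropic dilation $\delta_R$ and the explicit form of $\Delta_{\hor}$ in \eqref{def sub laplacian} (in particular controlling the mixed terms $y_j\partial^2_{x_j\tau}$, $x_j\partial^2_{y_j\tau}$), and that the chosen power $2p'$ of the cutoff keeps $\varphi_R^{-p'/p}|(\partial_t+\Delta_{\hor})\varphi_R|^{p'}$ integrable without spurious blow-up near $\partial\supp\varphi_R$. The rest — Hölder, change of variables, the dichotomy between exponent $<0$ and $=0$ — is routine. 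I would also note that the compact-support / positivity hypothesis on $u_0$ only enters to guarantee the lower bound $c>0$ on the data term for large $R$, matching \eqref{assumption intial data TFM}.
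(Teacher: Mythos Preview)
Your proposal is correct and follows essentially the same test function method as the paper; the only cosmetic difference is that the paper builds the cutoff as a product in separate coordinates, $\varphi_R(t,x,y,\tau)=\beta(t/R^2)\alpha(x/R)\alpha(y/R)\beta(\tau/R^2)$, rather than through the homogeneous norm, which makes the smoothness and $R^{-2}$-scaling verifications for $\partial_t\varphi_R$ and $\Delta_{\hor}\varphi_R$ slightly more explicit. One small caution: your first choice $\phi\big(R^{-2}(t+|\eta|_{\mathbf{H}_n}^2)\big)$ is not $\mathcal{C}^\infty$ along the axis $\eta=0$ for $t\in(R^2/2,R^2)$, since $|\eta|_{\mathbf{H}_n}^2=\big((|x|^2+|y|^2)^2+\tau^2\big)^{1/2}$ fails to be smooth at the origin, so you should indeed use your second option based on the polynomial $|\eta|_{\mathbf{H}_n}^4$.
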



\begin{proof}
We apply the so-called \emph{test function method}. By contradiction, we assume that there exists a global in time weak solution $u$ to \eqref{semilinear heat Heisenberg}.

 Let us consider two bump functions $\alpha\in \mathcal{C}_0^\infty(\mathbb{R}^n)$ and $\beta\in \mathcal{C}_0^\infty(\mathbb{R})$. Furthermore, we require that $\alpha,\beta$ are radial symmetric and decreasing with respect to the radial variable,  $\alpha=1$ on $B^{n}(\frac{1}{2})$, $\beta=1$ on $[-\frac{1}{4},\frac{1}{4}]$, $\supp\alpha \subset B^{n}(1)$ and $\supp\beta \subset (-1,1)$.
  If $R>0$ is a parameter, then, we define the test function $\varphi_R\in \mathcal{C}^\infty_0([0,\infty)\times \mathbb{R}^{2n+1})$ with separate variables as follows
 \begin{align}
 \varphi_R(t,x,y,\tau) \doteq \beta\left(\tfrac{t}{R^2}\right)\alpha\left(\tfrac{x}{R}\right)\alpha\left(\tfrac{y}{R}\right)\beta\left(\tfrac{\tau}{R^2}\right) \quad \mbox{for any} \ (t,x,y,\tau)\in [0,\infty)\times \mathbb{R}^{2n+1}. \label{def varphiR}
 \end{align} It is well-know that
 \begin{align*}
 |\partial_j \alpha| & \lesssim \alpha^{\frac{1}{p}} \quad \mbox{for any} \ 1\leq j\leq n, \quad
  |\partial_j \partial_k \alpha|  \lesssim \alpha^{\frac{1}{p}} \quad \mbox{for any} \ 1\leq j,k \leq n, \quad
  |\beta '|  \lesssim \beta^{\frac{1}{p}}, \quad |\beta''| \lesssim \beta^{\frac{1}{p}}.
 \end{align*} Furthermore, $0\leq \alpha,\beta \leq 1$ implies immediately $\alpha\leq \alpha^{\frac{1}{p}} $ and $\beta\leq \beta^{\frac{1}{p}} $. Therefore, from the relations
\begin{align*}
 \partial_t \varphi_R(t,x,y,\tau) &= R^{-2} \beta'\left(\tfrac{t}{R^2}\right)\alpha\left(\tfrac{x}{R}\right)\alpha\left(\tfrac{y}{R}\right)\beta\left(\tfrac{\tau}{R^2}\right), \\
 \Delta_{\hor}\varphi_R(t,x,y,\tau) &= R^{-2} \beta\left(\tfrac{t}{R^2}\right)\Delta \alpha\left(\tfrac{x}{R}\right)\alpha\left(\tfrac{y}{R}\right)\beta\left(\tfrac{\tau}{R^2}\right) +R^{-2} \beta\left(\tfrac{t}{R^2}\right) \alpha\left(\tfrac{x}{R}\right)\Delta \alpha\left(\tfrac{y}{R}\right)\beta\left(\tfrac{\tau}{R^2}\right) \\
 & \quad + 4 R^{-3}\sum _{j=1}^n y_j \beta\left(\tfrac{t}{R^2}\right)\partial_j \alpha\left(\tfrac{x}{R}\right)\alpha\left(\tfrac{y}{R}\right)\beta'\left(\tfrac{\tau}{R^2}\right) - 4 R^{-3}\sum _{j=1}^n x_j \beta\left(\tfrac{t}{R^2}\right)\alpha\left(\tfrac{x}{R}\right)\partial_j \alpha\left(\tfrac{y}{R}\right)\beta'\left(\tfrac{\tau}{R^2}\right)  \\
 & \quad + 4 R^{-4} (|x|^2+|y|^2) \beta\left(\tfrac{t}{R^2}\right)\alpha\left(\tfrac{x}{R}\right)\alpha\left(\tfrac{y}{R}\right)\beta''\left(\tfrac{\tau}{R^2}\right),
\end{align*} where $\Delta$ denotes the Laplace operator on $\mathbb{R}^n$, we get
\begin{equation} \label{estimate derivatives of varphi_R}
\begin{split}
 | \partial_t \varphi_R| & \lesssim R^{-2} ( \varphi_R)^{\frac{1}{p}},  \\ | \Delta_{\hor} \varphi_R| & \lesssim R^{-2} (\varphi_R)^{\frac{1}{p}}.
 \end{split}
\end{equation} Note we employed the fact that $\supp \varphi_R \subset [0,R^2]\times B^n(R)\times B^n(R) \times [-R^2,R^2]$ in order to estimate the polynomial terms in the estimate of $|\Delta_{\hor} \varphi_R|$.

Let us apply the definition of weak solution \eqref{def weak sol formula} for the test function $\varphi_R$. Hence, by \eqref{estimate derivatives of varphi_R} we obtain
\begin{align}
 \int_0^\infty \int_{\mathbf{H}_n} & |u(t,\eta)|^p\varphi_R(t,\eta) \, d\eta \, dt+ \varepsilon \int_{\mathbf{H}_n}u_0(\eta)\varphi_R(0,\eta) \, d\eta \notag\\
 & \leq  \int_0^\infty \int_{\mathbf{H}_n}|u(t,\eta)|\big(|\partial_t \varphi_R(t,\eta)| +|\Delta_{\hor} \varphi_R(t,\eta)|\big)  \, d\eta \, dt \notag \\
 &  \lesssim  R^{-2} \int_0^\infty \int_{\mathbf{H}_n}|u(t,\eta)| (\varphi_R(t,\eta))^{\frac{1}{p}} \, d\eta \, dt \notag  \\
 & \leq  R^{-2}  \bigg(\int_0^\infty \int_{\mathbf{H}_n} |u(t,\eta)|^p \varphi_R(t,\eta) \, d\eta \, dt \bigg)^{\frac{1}{p}}\bigg(\iint_{[0,R^2]\times D_R} d\eta \, dt  \bigg)^{\frac{1}{p'}}.\label{chain ineq weak sol varphi_R}
\end{align} Let us introduce now the functions
\begin{align}
I_{R}\doteq \int_0^\infty \int_{\mathbf{H}_n}  |u(t,\eta)|^p\varphi_R(t,\eta) \, d\eta \, dt, \quad
J_{R}\doteq \int_{\mathbf{H}_n}u_0(\eta)\varphi_R(0,\eta) \, d\eta. \label{def IR and JR}
\end{align} Due to the assumption on the data \eqref{assumption intial data TFM}, we have $\liminf_{R\to \infty} J_R>0$, which implies in turn that $J_R>0$ for $R\geq R_0$, where $R_0$ is a suitable positive real number. Indeed, from $\supp \varphi_R(0,\cdot)\subset D_R$ and $\varphi_R(0,\cdot)=1$ on $D_{R/2}$ we get trivially
\begin{align*}
J_R= \int_{D_R}u_0(\eta)\varphi_R(0,\eta) \, d\eta \geq \int_{D_{R/2}}u_0(\eta) \, d\eta.
\end{align*} Then, for $R\geq R_0$ the estimate in \eqref{chain ineq weak sol varphi_R} yields
\begin{align}\label{estimate I_R intermediate}
I_R\leq I_R+\varepsilon J_R \lesssim R^{-2+\frac{2n+4}{p'}} I_R^{\frac{1}{p}}=R^{Q-\frac{Q+2}{p}} I_R^{\frac{1}{p}}.
\end{align} When the exponent of $R$ in the right-hand side of the last inequality  is negative, i.e. for $p<p_{\Fuj}(Q)$, we have that $$0\leq I_R^{1-\frac{1}{p}}\lesssim R^{Q-\frac{Q+2}{p}}\longrightarrow  0 \quad \mbox{as}\  R\to \infty.$$ Thus, $\lim_{R\to \infty} I_R =0$. However, this is not possible, because the term $J_R$ is positive for $R$ sufficiently large. So, letting $R\to \infty$ in \eqref{estimate I_R intermediate} we find the contradiction we were looking for. In order to get a contradiction in the critical case $p=p_{\Fuj}(Q)$ too, we need to refine the estimate in \eqref{chain ineq weak sol varphi_R}. More precisely, we can use the fact that $\partial_t \varphi_R$ is supported in $\widehat{P}_R\doteq [\frac{R^2}{4},R^2]\times D_R$ and $\Delta_{\hor} \varphi_R$  is supported in $\widetilde{P}_R\doteq[0,R^2]\times (D_{1,R}\cup D_{2,R}\cup D_{3,R})$, where
\begin{align*}
D_{1,R} & \doteq \big(B^n(R)\setminus B^n(\tfrac{R}{2})\big)\times B^n(R)\times [-R^2,R^2] ,\\
D_{2,R} & \doteq B^n(R)\times \big(B^n(R)\setminus B^n(\tfrac{R}{2})\big)\times  [-R^2,R^2] ,\\
D_{3,R} & \doteq  B^n(R) \times (B^n(R))\times \big( [-R^2,R^2]\setminus \big[-\tfrac{R^2}{4},\tfrac{R^2}{4}\big]\big).
\end{align*} Consequently, for $R\geq R_0$ we may improve \eqref{chain ineq weak sol varphi_R} as follows
\begin{align}\label{estimate I_R intermediate improvement}
I_R\leq I_R+\varepsilon J_R\lesssim  \widehat{I}_R^{\frac{1}{p}}+\widetilde{I}_R^{\frac{1}{p}},
\end{align} where
\begin{align*}
\widehat{I}_R  \doteq \iint_{\widehat{P}_R}  |u(t,\eta)|^p\varphi_R(t,\eta) \, d\eta \, dt \quad \mbox{and} \quad
\widetilde{I}_R \doteq \iint_{\widetilde{P}_R}  |u(t,\eta)|^p\varphi_R(t,\eta) \, d\eta \, dt.
\end{align*} In the critical case $p=p_{\Fuj}(Q)$, from \eqref{estimate I_R intermediate} it follows that $I_R$ is uniformly bounded as $R\to\infty$. Using the monotone convergence theorem, we find
\begin{align*}
\lim_{R\to\infty} I_R =  \lim_{R\to\infty} \int_0^\infty \int_{\mathbf{H}_n}  |u(t,\eta)|^p\varphi_R(t,\eta) \, d\eta \, dt = \int_0^\infty \int_{\mathbf{H}_n}  |u(t,\eta)|^p  \, d\eta \, dt \lesssim 1.
\end{align*} This means that $u\in L^p([0,\infty)\times \mathbf{H}_n)$. Applying now the dominated convergence theorem, as the characteristic functions of the sets $\widehat{P}_R$ and $\widetilde{P}_R$ converge to the zero function for $R\to \infty$, we have
\begin{align*}
\lim_{R\to\infty} \widehat{I}_R &= \lim_{R\to\infty} \iint_{\widehat{P}_R}  |u(t,\eta)|^p\varphi_R(t,\eta) \, d\eta \, dt =0,\\
\lim_{R\to\infty} \widetilde{I}_R &= \lim_{R\to\infty} \iint_{\widetilde{P}_R}  |u(t,\eta)|^p\varphi_R(t,\eta) \, d\eta \, dt =0.
\end{align*} Also, letting $R\to \infty$, \eqref{estimate I_R intermediate improvement} implies $\lim_{R\to\infty}I_R=0$ which provides the desired contradiction in turn, as we have already seen in the subcritical case. The proof is completed.
\end{proof}

\begin{remark} In Subsection \ref{Section upper bound} we will provide the complete proof of Theorem \ref{Thm upper bound estimate lifespan}. However, in the subcritical case $1<p<p_{\Fuj}(Q)$ it is possible to prove the upper bound estimate for the lifespan of the solution by modifying slightly the approach used in the proof of Proposition \ref{Prop test function method}. In fact, by \eqref{estimate I_R intermediate} we know that
\begin{align*}
I_R+\varepsilon J_R\leq C R^{Q-\frac{Q+2}{p}} I_R^{\frac{1}{p}},
\end{align*} where $I_R$ and $J_R$ are defined as in \eqref{def IR and JR}. Applying Young's inequality on the right-hand side of the previous inequality, we get
\begin{align*}
I_R+\varepsilon J_R\leq \tfrac{1}{p'}\left( C R^{Q-\frac{Q+2}{p}}\right)^{p'}+\tfrac{1}{p} I_R
\end{align*} which implies in turn
\begin{align*}
 \varepsilon J_R \leq (1-\tfrac{1}{p})I_R+\varepsilon J_R \lesssim \left( R^{Q-\frac{Q+2}{p}}\right)^{p'} = R^{-\left(\frac{2}{p-1}-Q\right)}.
\end{align*}
Due to the assumption \eqref{assumption intial data TFM}, we have seen that $\liminf_{R\to \infty} J_R>0$. This means that $J_R\gtrsim 1$ for $R\geq R_1$, where $R_1$ is a suitable large constant. Hence, for $R\geq R_1$ we find
\begin{align*}
\varepsilon \lesssim \varepsilon J_R \lesssim  R^{-\left(\frac{2}{p-1}-Q\right)}.
\end{align*} If we assume that $1<p<p_{\Fuj}(Q)$, then, the power for $R$ is negative in the last estimate. Thus,
\begin{align*}
R\lesssim  \varepsilon^{-\left(\frac{2}{p-1}-Q\right)^{-1}}.
\end{align*}
 We point out that in the scaling of the bump function $\beta$ correspondingly to the time variable 
  in \eqref{def varphiR} the parameter $R^2$ has to be dominated by the lifespan $T$ in order to guarantee $\varphi_R\in \mathcal{C}^\infty_0([0,T)\times\mathbf{H}_n)$. Therefore, the last relation implies
\begin{align*}
T^{\frac{1}{2}}\lesssim \varepsilon^{-\left(\frac{2}{p-1}-Q\right)^{-1}} \ \ \Rightarrow  \ \ T\lesssim \varepsilon^{-\left(\frac{1}{p-1}-\frac{Q}{2}\right)^{-1}},
\end{align*} which is the desired estimate. Note that we assumed  without loss of generality in the previous step that $T^{\frac{1}{2}}\geq R_1$. Indeed, if $T^{\frac{1}{2}}\leq R_1$, then, for $\varepsilon>0$ sufficiently small the inequality $T\lesssim \varepsilon^{-\left(\frac{1}{p-1}-\frac{Q}{2}\right)^{-1}}$ is trivially satisfied.
\end{remark}

\subsection{Upper bound estimates for the lifespan} \label{Section upper bound}

%


In this subsection we prove Theorem \ref{Thm upper bound estimate lifespan}. Our approach is based on the revisited version of the test function method developed by Ikeda-Sobajima in \cite{IS17}. Of course, in the previous subsection we showed how   it is possible to get the upper bound estimate for the lifespan  in the subcritical case, so only the critical case is left. Nonetheless, in the next proof we can deal with the subcritical and critical case at the same time with small modifications and only in the very last steps.

\begin{proof}[Proof of Theorem \ref{Thm upper bound estimate lifespan}] 
Let us begin pointing out that we may assume that $2R_0<T(\varepsilon)$ without loss of generality. Indeed, if $T(\varepsilon)\leq 2R_0$, then, \eqref{upper bound lifespan} is trivially fulfilled, provided that $\varepsilon_0$ is sufficiently small.  Let $\phi\in \mathcal{C}_0^\infty([0,\infty))$ be a bump function such that $\phi=1$ on $[0,\frac{1}{2}]$, $\supp \phi \subset [0,1)$ and $\phi$ is a decreasing function. Furthermore, {we denote
$$\phi^*(r)= \begin{cases} 0 & \mbox{if} \ \  r\in \big[0,\frac{1}{2}\big), \\ \phi(r) & \mbox{if} \ \ r\in \big[\frac{1}{2}, \infty\big). \end{cases}$$    Clearly, $\phi^*$ is not smooth. In some sense, we will use this notation in order to keep trace of the supports of the derivatives of $\phi$, which are strictly contained in the one of $\phi$.}

Let us consider
\begin{align}\label{definition of psi_R}
\psi_R(t,\eta) \doteq \left[\phi\left(s_R(t,\eta)\right)\right]^{2p'}, \quad \psi_R^*(t,\eta) \doteq \left[\phi^*\left(s_R(t,\eta)\right)\right]^{2p'},
\end{align} where $R>0$ is a positive parameter and
\begin{align*}
s_R(t,\eta) \doteq \frac{t^2+|x|^4+|y|^4+|\tau|^2}{R^2}\quad \mbox{for any} \ \eta= (x,y,\tau)\in \mathbf{H}_n.
\end{align*}
 As straightforward consequence of the choice of the function $\phi$
, we get
\begin{align*}
\supp \psi_R &  \subset  \mathcal{Q}_R  \doteq[0,R]\times B^n(R^{\frac{1}{2}}) \times B^n(R^{\frac{1}{2}}) \times [-R,R] .
\end{align*}
Moreover, the relation
\begin{align*}
\partial_t \psi_R(t,\eta) = 4p' R^{-2} t\left[\phi\left(s_R(t,\eta)\right)\right]^{2p'-1} \phi'\left(s_R(t,\eta)\right)
\end{align*}
 implies immediately
\begin{align}
|\partial_t \psi_R(t,\eta)| & \lesssim   R^{-2} t \left[\phi^*\left(s_R(t,\eta)\right)\right]^{\frac{2p'}{p}} \phi\left(s_R(t,\eta)\right) |\phi'\left(s_R(t,\eta)\right)| \lesssim   R^{-1} \left[\psi_R^*(t,\eta)\right]^{\frac{1}{p}}. \label{estimate dt psi_R}
\end{align}
 Similarly, plugging the relations
\begin{align*}
\partial_{x_j}^2 \psi_R(t,\eta) & =8p' R^{-2} (2x_j^2+|x|^2) \left[\phi\left(s_R(t,\eta)\right)\right]^{2p'-1} \phi'\left(s_R(t,\eta)\right)  \\
& \quad +  32p' (2p'-1) R^{-4} |x|^4 x_j^2  \left[\phi\left(s_R(t,\eta)\right)\right]^{2p'-2} \left[\phi'\left(s_R(t,\eta)\right)\right]^2\\
& \quad + 32p' R^{-4} |x|^4 x_j^2 \left[\phi\left(s_R(t,\eta)\right)\right]^{2p'-1} \phi''\left(s_R(t,\eta)\right), \\
\partial_{x_j\tau}^2 \psi_R(t,\eta) & =16p'(2p'-1) R^{-4} |x|^2x_j \tau  \left[\phi\left(s_R(t,\eta)\right)\right]^{2p'-2} \left[\phi'\left(s_R(t,\eta)\right)\right]^2  \\
& \quad + 16p' R^{-4} |x|^2x_j \tau   \left[\phi\left(s_R(t,\eta)\right)\right]^{2p'-1} \phi''\left(s_R(t,\eta)\right), \\
  \partial_{\tau}^2 \psi_R(t,\eta) & =  4p' R^{-2}  \left[\phi\left(s_R(t,\eta)\right)\right]^{2p'-1} \phi'\left(s_R(t,\eta)\right) \\ & \quad +8p'(2p'-1) R^{-4} \tau^2  \left[\phi\left(s_R(t,\eta)\right)\right]^{2p'-2} \left[\phi'\left(s_R(t,\eta)\right)\right]^2  \\
  & \quad +8p' R^{-4}  \left[\phi\left(s_R(t,\eta)\right)\right]^{2p'-1} \phi''\left(s_R(t,\eta)\right)
\end{align*}
 and analogous relations for $\partial_{y_j}^2 \psi_R(t,\eta) $ and $\partial_{y_j\tau}^2 \psi_R(t,\eta)$  in the definition of sub-Laplacian in \eqref{def sub laplacian}, we find the estimate
\begin{align}
|\Delta_{\hor} \psi_R(t,\eta)| & \lesssim    R^{-1} \left[\psi_R^*(t,\eta)\right]^{\frac{1}{p}}.\label{estimate subLapl psi_R}
\end{align} So, applying \eqref{def weak sol formula} with test function $\psi_R$ and using \eqref{estimate dt psi_R}, \eqref{estimate subLapl psi_R}, we obtain
\begin{align*}
\int_0^T \int_{\mathbf{H}_n}|u(t,\eta)|^p\psi_R(t,\eta) \, d\eta \, dt+ \varepsilon \int_{\mathbf{H}_n}u_0(\eta) \, d\eta  & \leq \int_0^T \int_{\mathbf{H}_n}|u(t,\eta)|\big(|\partial_t \psi_R(t,\eta)| +|\Delta_{\hor}\psi_R(t,\eta)| \big)\, d\eta \, dt
\\
& \lesssim R^{-1} \int_0^T \int_{\mathbf{H}_n}|u(t,\eta)|\left[\psi_R^*(t,\eta)\right]^{\frac{1}{p}}\, d\eta \, dt \\
& \lesssim R^{-1+\frac{n+2}{p'}} \bigg(\int_0^T \int_{\mathbf{H}_n}|u(t,\eta)|^p \psi_R^*(t,\eta)\, d\eta \, dt\bigg)^{\frac{1}{p}}
\end{align*} for any $R\in (2R_0,T(\varepsilon))$, where in the last inequality we used H\"{o}lder's inequality and the fact that the Lebesgue measure of $\mathcal{Q}_R$ is  $R^{n+2}$ times a multiplicative constant. Note that the requirement $R>2R_0$ implies that $\psi_R(0,\cdot)\equiv 1$ on $\supp u_0$.

Let us remark that the exponent for $R$ in the right-hand side of the previous chain of inequalities
\begin{align*}
-1+\tfrac{n+2}{p'} & =-1+(n+2)\left(1-\tfrac{1}{p}\right)=n+1-\tfrac{n+2}{p}= \tfrac{1}{p}\left((n+1)(p-1)-1\right) = -\tfrac{p-1}{p}\left(\tfrac{1}{p-1}-(n+1)\right)  \\ &= -\tfrac{p-1}{p}\left(\tfrac{1}{p-1}-\tfrac{Q}{2}\right)
\end{align*} is non-positive if and only if $p\leq p_{\Fuj}(Q)$. Hence, summarizing, we have just shown
\begin{equation}
\int_0^T \int_{\mathbf{H}_n}|u(t,\eta)|^p\psi_R(t,\eta) \, d\eta dt +\varepsilon \int_{\mathbf{H}_n}u_0(\eta) \, d\eta  \lesssim R^{-\frac{p-1}{p}\left(\frac{1}{p-1}-\frac{Q}{2}\right)} \bigg(\int_0^T \int_{\mathbf{H}_n}|u(t,\eta)|^p \psi_R^*(t,\eta)\, d\eta \, dt\bigg)^{\frac{1}{p}} . \label{fundamental estimate lifespan}
\end{equation}

{ Let us use the notations
$$ X(r)  \doteq  \int_0^T \int_{\mathbf{H}_n}|u(t,\eta)|^p\psi_r(t,\eta) \, d\eta dt $$
$$ Y(r) \doteq  \int_0^T \int_{\mathbf{H}_n}|u(t,\eta)|^p \psi_r^*(t,\eta)\, d\eta \, dt$$
for the quantities appearing in the above inequality \eqref{fundamental estimate lifespan}.
We shall need the following simple observation.
\begin{lemma} \label{lemma g}
If $g=g(s)$ is a measurable function satisfying the properties: $g(s)=0$ for $s \in [0,\frac{1}{2}]\cup [1,\infty)$ and $g(s)$ is a decreasing function for $s > 1/2$, then  for any $R >0, A>0$ we have
\begin{equation}\label{estimate lemma g}
 \int_0^R g\left(\frac{A}{r^2}\right) \frac{dr}{r} \leq \frac{\log 2}{2} \, g\left(\frac{A}{R^2}\right).
 \end{equation}
\end{lemma}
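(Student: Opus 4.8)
The plan is to turn the integral into a one-dimensional integral in the variable $s=A/r^{2}$, which is precisely the argument of $g$, and then to use the support and the monotonicity of $g$. First I would perform the substitution $s=Ar^{-2}$ in the left-hand side of \eqref{estimate lemma g}: since $ds=-2Ar^{-3}\,dr$ one has $\dfrac{dr}{r}=-\dfrac12\,\dfrac{ds}{s}$, and as $r$ ranges over $(0,R)$ the variable $s$ ranges over $(A/R^{2},\infty)$, so that
\begin{equation*}
\int_{0}^{R}g\!\left(\frac{A}{r^{2}}\right)\frac{dr}{r}=\frac12\int_{A/R^{2}}^{\infty}\frac{g(s)}{s}\,ds .
\end{equation*}
Before estimating this, I would record two elementary consequences of the hypotheses: $g\ge 0$ everywhere (it vanishes on $[0,\tfrac12]\cup[1,\infty)$ and, being decreasing on $(\tfrac12,\infty)$ with $g(1)=0$, it satisfies $g(s)\ge g(1)=0$ on $(\tfrac12,1)$), and the integrand $g(s)/s$ above is supported in $[\tfrac12,1]$.

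With this in hand, the estimate is immediate in the extreme ranges and follows from monotonicity in the intermediate one. If $A/R^{2}\ge 1$, the integral vanishes and $g(A/R^{2})=0$, so \eqref{estimate lemma g} reads $0\le 0$. If $\tfrac12< A/R^{2}< 1$, then by the support remark the integral equals $\tfrac12\int_{A/R^{2}}^{1}\tfrac{g(s)}{s}\,ds$, and since $g$ is decreasing on $(\tfrac12,\infty)$ we have $g(s)\le g(A/R^{2})$ for every $s\in[A/R^{2},1]$; hence
\begin{equation*}
\frac12\int_{A/R^{2}}^{1}\frac{g(s)}{s}\,ds\le\frac{g(A/R^{2})}{2}\int_{A/R^{2}}^{1}\frac{ds}{s}=\frac{g(A/R^{2})}{2}\,\log\frac{R^{2}}{A}\le\frac{\log 2}{2}\,g\!\left(\frac{A}{R^{2}}\right),
\end{equation*}
the last step because $A/R^{2}\ge\tfrac12$ forces $\log(R^{2}/A)\le\log 2$; this is exactly \eqref{estimate lemma g}.

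The point that needs care — and which I regard as the only real obstacle here — is the support bookkeeping that pins the constant down to $\tfrac{\log 2}{2}$: the tightest configuration is the one in which the whole support $(\tfrac12,1)$ of the integrand sits inside the range of integration and is pushed against its right endpoint $s=1$, which is governed by the comparison of $A/R^{2}$ with $\tfrac12$ and is precisely where $\int_{1/2}^{1}\tfrac{ds}{s}=\log 2$ enters as the sharp constant. The borderline regime $A/R^{2}\le\tfrac12$ is the delicate one; in the application later in the present proof the function $g$ is built from the cut-off $\phi$ and $A/R^{2}$ is a value $s_{R}(t,\eta)$ of the localized weight, and \eqref{estimate lemma g} will be invoked in exactly the form derived above.
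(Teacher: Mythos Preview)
Your treatment of the cases $A/R^{2}\ge 1$ and $\tfrac12<A/R^{2}<1$ is correct and is essentially the paper's own argument, carried out after the substitution $s=A/r^{2}$; the paper works directly in the $r$-variable on $[0,R]\cap[\sqrt{A},\sqrt{2A}]$, which amounts to the same monotonicity estimate.

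The genuine gap is the case $A/R^{2}\le\tfrac12$, which you flag as ``delicate'' but do not prove. In fact it cannot be proved: the inequality \eqref{estimate lemma g} as stated is \emph{false} in that regime. When $A/R^{2}<\tfrac12$ one has $g(A/R^{2})=0$, so the right-hand side vanishes, while the left-hand side equals $\tfrac12\int_{1/2}^{1}g(s)\,s^{-1}\,ds>0$ whenever $g$ is not identically zero (e.g.\ take $g=1$ on $(\tfrac12,1)$, $g=0$ elsewhere, $A=1$, $R=2$). The paper's own one-line proof has the same lacuna: the step $g(A/r^{2})\le g(A/R^{2})$ for $r\in[0,R]\cap[\sqrt{A},\sqrt{2A}]$ appeals to monotonicity of $g$ on an interval containing $A/R^{2}$, which is unavailable once $A/R^{2}<\tfrac12$.

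What the application \eqref{si2} actually needs is not \eqref{estimate lemma g} literally but
\[
\int_{0}^{R}\bigl[\phi^{*}(A/r^{2})\bigr]^{2p'}\,\frac{dr}{r}\;\le\;\frac{\log 2}{2}\,\bigl[\phi(A/R^{2})\bigr]^{2p'},
\]
since it is $\psi_{R}=[\phi(s_{R})]^{2p'}$, not $\psi_{R}^{*}$, that sits inside $X(R)$. For $A/R^{2}\ge\tfrac12$ this follows from your argument because $\phi^{*}=\phi$ there; for $A/R^{2}<\tfrac12$ one has $\phi(A/R^{2})=1$, and the left-hand side is at most $\int_{\sqrt{A}}^{\sqrt{2A}}r^{-1}\,dr=\tfrac{\log 2}{2}$ since $[\phi^{*}]^{2p'}\le 1$. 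So your instinct to pass to the specific $g$ of the application was the right move; just make this repair explicit rather than leave it as a deferral.
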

\begin{proof}
If the set $\{r: \frac{1}{2}<\frac{A}{r^2}<1\}$ has empty intersection with the domain of integration $[0,R]$, then \eqref{estimate lemma g} is trivially true as the integrand function on the left hand side is identically 0. Otherwise, thanks to the assumptions on $g$ we get immediately
\begin{align*}
\int_0^R g\left(\frac{A}{r^2}\right) \frac{dr}{r} = \int_{[0,R]\cap [\sqrt{A},\sqrt{2A}]} g\left(\frac{A}{r^2}\right) \frac{dr}{r} \leq g\left(\frac{A}{R^2}\right)   \int_{\sqrt{A}}^{\sqrt{2A}}  \frac{dr}{r}  = \frac{\log 2}{2} \, g\left(\frac{A}{R^2}\right).
\end{align*}
\end{proof}
Rewriting \eqref{fundamental estimate lifespan} as 
\begin{equation} \label{si1}
     X(R) + \varepsilon I[u_0] \lesssim R^{-\frac{p-1}{p}\left(\frac{1}{p-1}-\frac{Q}{2}\right)} Y(R)^{\frac{1}{p}},
\end{equation}
where $I[u_0]\doteq \displaystyle{ \int_{\mathbb{R}^{2n+1}}u_0(\eta) \, d\eta}$, and using Lemma \ref{lemma g} with $g = [\phi^*]^{2p'}$ and $A=s_1(t,\eta)$, we easily get
\begin{equation} \label{si2}
 \frac{2}{\log 2} \,  \int_0^R Y(r) \frac{dr}{r} \leq  X(R)  
\end{equation}
Setting $$ W(R)  \doteq  \int_0^R Y(r) \frac{dr}{r} $$ and using $RW^\prime(R) = Y(R),$
we can combine \eqref{si1} and \eqref{si2} and deduce
$$  \frac{2 W(R) }{\log 2} + \varepsilon I[u_0] \leq  X(R)+  \varepsilon I[u_0] \lesssim  R^{-\frac{p-1}{p}\left(\frac{1}{p-1}-\frac{Q}{2}\right)+\frac{1}{p}} \left( W^\prime(R)  \right)^{\frac{1}{p}}.$$ In this way, we arrive at
\begin{align}\label{differential inequality for W(R)}
C R^{\left(\frac{1}{p-1}-\frac{Q}{2}\right)(p-1)-1}\leq W'(R)\left(\varepsilon I[u_0]+\frac{2W(R)}{\log 2}\right)^{-p},
\end{align} where $C$ is suitable positive multiplicative constant  that may change from line to line in the next estimates.
}

The next step is to integrate \eqref{differential inequality for W(R)} over $[2R_0,T(\varepsilon)]$. Clearly,
\begin{align*}
\int_{2R_0}^{T} R^{\left(\frac{1}{p-1}-\frac{Q}{2}\right)(p-1)-1}\, dR \simeq  \begin{cases} T^{\left(\frac{1}{p-1}-\frac{Q}{2}\right)(p-1)}-(2R_0)^{\left(\frac{1}{p-1}-\frac{Q}{2}\right)(p-1)} & \mbox{if} \ \ p\in(1,p_{\Fuj}(Q)), \\ \log\left(\frac{T}{2R_0}\right) & \mbox{if} \ \ p=p_{\Fuj}(Q)\end{cases}
\end{align*} and
\begin{align*}
\int_{2R_0}^{T}  W'(R)\left(\varepsilon I[u_0]+\frac{2W(R)}{\log 2}\right)^{-p} \, dR \leq \frac{\log 2}{2(p-1)} \left(\varepsilon I[u_0]+\frac{2W(2R_0)}{\log 2}\right)^{1-p} \lesssim \varepsilon^{1-p}.
\end{align*} Then, integrating both sides of \eqref{differential inequality for W(R)} and choosing a suitably small $\varepsilon_0>0$, in the subcritical case $p<p_{\Fuj}(Q)$  for any $\varepsilon\in (0,\varepsilon_0]$ we obtain
\begin{align} \label{upper bound proof subcritical}
T^{\left(\frac{1}{p-1}-\frac{Q}{2}\right)(p-1)}\leq C\varepsilon^{-(p-1)} + (2R_0)^{\left(\frac{1}{p-1}-\frac{Q}{2}\right)(p-1)} \leq C\varepsilon^{-(p-1)} ,
\end{align} whereas in the critical case $p=p_{\Fuj}(Q)$ we have
\begin{align} \label{upper bound proof critical}
\log\left(\frac{T}{2R_0}\right)\lesssim (I[u_0])^{1-p} \varepsilon^{-(p-1)} .
\end{align} By \eqref{upper bound proof subcritical} and \eqref{upper bound proof critical} we get the desired estimate in \eqref{upper bound lifespan}. Hence, the statement of the theorem is completely proved.
\end{proof}

\section{Preliminary results}

Goal of this section is to derive some a priori estimates for (local or global in time) solutions to \eqref{semilinear heat Heisenberg}.  
Our approach relies on the following properties of the heat kernel $(t,\eta)\in (0,\infty)\times \mathbf{H}_n \to h_t(\eta)$ on the Heisenberg group (actually, these properties are satisfied in the more general frame of nilpotent Lie group, see \cite{Hor67,Fol75,VSC92}):
\begin{enumerate}
\item the heat kernel is a positive fundamental solution for the heat operator $\partial
_t-\Delta_{\hor}$;
\item the heat kernel is a $\mathcal{C}^\infty((0,\infty)\times\mathbf{H}_n)$ function (this fact follows immediately from the hypoellepticity of $\partial
_t-\Delta_{\hor}$);
\item  the heat kernel satisfies $\|h_t\|_{L^1(\mathbf{H}_n)}=1$  for any $t>0$;
\item the action of the heat semigroup $\{e^{t\Delta_{\hor}}\}_{t>0}$ is given by the convolution
\begin{align*}
e^{t\Delta_{\hor}} v (\eta)\doteq (v\ast h_t) (\eta)=  \int_{\mathbf{H}_n}v(\zeta)h_t(\zeta^{-1}\circ \eta) 	\, d\zeta = \int_{\mathbf{H}_n}v(\eta\circ \zeta^{-1})h_t(\zeta) 	 \, d\zeta,
\end{align*} where $d\zeta$ is the Lebesgue measure on $\mathbb{R}^{2n+1}$ which is also a left and right-invariant Haar measure on the Lie group $\mathbf{H}_n$;
\item there exist two positive constants $c,C$ such that the heat kernel can be estimate as follows:
\begin{align}\label{heat kernel behavior}
c t^{-\frac{Q}{2}}\exp\left( -\frac{C |\eta |_{\mathbf{H}_n}^2}{t}\right)\leq h_t(\eta)\leq C t^{-\frac{Q}{2}}\exp\left( -\frac{c |\eta |_{\mathbf{H}_n}^2}{t}\right)
\end{align} for any $t>0$ and $\eta\in \mathbf{H}_n$.
\end{enumerate}

We underline that several works have been devoted to the study of the heat kernel (fundamental solution of the heat equation) in the Heisenberg group (cf. \cite{Hul76,Gav77,BGG00,GL17} and references therein contained). Our approach will rely basically on the uniform boundedness of the $L^1(\mathbf{H}_n)$-norm of the heat kernel and on the estimate of Gaussian-type \eqref{heat kernel behavior} (for the proof of this result see for example \cite[Theorem 3.12]{KS87}).

\begin{remark} As $\mathbf{H}_n$ is a Carnot group, we may introduce on $\mathbf{H}_n$ the so-called Carnot-Carath\'{e}odory metric $d_{CC}$ as well. Actually, \eqref{heat kernel behavior} is stated in \cite[Theorem VIII.2.9]{VSC92} with $d_{CC}(\eta,0)$ in place of $ |\eta |_{\mathbf{H}_n}$. However, it is well-known that the left-invariant homogeneous norms $d_{CC}(\cdot,0)$ and $|\cdot|_{\mathbf{H}_n}$ are equivalent and, therefore, we may switch them in \eqref{heat kernel behavior} (clearly, up to a modification of the constants $c,C$).
\end{remark}

\begin{remark} From the scaling properties of the heat operator $\partial_t -\Delta_{\hor}$ we can derive a scale-invariance property for the heat kernel (cf. \cite[Theorem 3.1]
{Fol75}). In order to prove this property, let us introduce for any $\lambda>0$ the scaling operators
\begin{align*}
S_\lambda u(t,\eta)& \doteq u(\lambda^{2}t,\delta_{\lambda} (\eta)) ,\\
S_\lambda u_0(\eta)& \doteq u_0(\delta_{\lambda} (\eta)),
\end{align*} where  $\delta_\lambda$ is the anisotropic dilation on $\mathbf{H}_n$. If $u$ solves the homogeneous problem
\begin{align*}
    \begin{cases}
    u_t-\Delta_{\hor}u=0, & \eta\in \mathbf{H}_n, \ t>0, \\
    u(0,\eta) = u_0(\eta), & \eta\in \mathbf{H}_n,
    \end{cases}
\end{align*} then, using the property $$\lambda^2 S_{\lambda^{-1}}(\partial_t-\Delta_{\hor})S_{\lambda}=\partial_t-\Delta_{\hor},$$ we see immediately that $u_{\lambda} \doteq S_{\lambda}u$ solves
\begin{align} \label{hom system u lambda}
    \begin{cases}
    w_t-\Delta_{\hor}w=0, & \eta\in \mathbf{H}_n, \ t>0, \\
    w(0,\eta) = S_{\lambda}u_0(\eta), & \eta\in \mathbf{H}_n.
    \end{cases}
\end{align} Therefore,  we may write $u_\lambda$ in two different ways. On the one hand, we use that $u_\lambda$ solves \eqref{hom system u lambda}
\begin{align*}
    u_{\lambda}(t,\eta) &= \int_{\mathbf{H}_n}h_t(\zeta^{-1}\circ \eta)S_\lambda u_0(\zeta) \, d\zeta =\int_{\mathbf{H}_n}h_t(\zeta^{-1}\circ \eta) \, u_0(\delta_{\lambda} (\zeta)) \, d\zeta.
 \end{align*} On the other hand, we use the fact that $u_\lambda$ is defined through a scaling operator applied to $u$ and, consequently,
 \begin{align*}
    u_{\lambda}(t,\eta) &= \int_{\mathbf{H}_n}h_{\lambda^2 t} (\zeta^{-1}\circ \delta_{\lambda} (\eta)) \, u_0(\zeta) \, d\zeta =  \lambda^{Q} \int_{\mathbf{H}_n}  h_{\lambda^2 t} (\delta_{\lambda}(\zeta)^{-1}\circ \delta_{\lambda} (\eta)) \, u_0(\delta_{\lambda} (\zeta)) \, d\zeta\\
    &=   \int_{\mathbf{H}_n} \lambda^{Q}\,  h_{\lambda^2 t} (\delta_{\lambda}(\zeta^{-1}\circ \eta)) \, u_0(\delta_{\lambda} (\zeta)) \, d\zeta.
\end{align*} As these two expressions coincide for any data $u_0$ and any $\eta\in \mathbf{H}_n$ and $t>0$, then, necessarily we have
\begin{align*} 
h_{ t} (\xi) =  \lambda^{Q} \, h_{\lambda^2 t} (\delta_{\lambda}(\xi))
\end{align*} for any $\lambda>0$ and any $\xi\in \mathbf{H}_n,t>0$. In particular, when $\lambda=t^{-\frac{1}{2}}$ we have
\begin{align} \label{scaling property heat kernel normilized}
h_{ t} (\xi) =  t^{-\frac{Q}{2}} \, h_{1} (\delta_{t^{-1/2}}(\xi))
\end{align} for any $\xi\in \mathbf{H}_n$ and any $t>0$.
\end{remark}

The remaining part of this section is organized as follows: first we prove three preliminary results (cf. Propositions \ref{Prop fund estimate linear term inhom case}, \ref{Prop fund estimate linear term inhom case kappa<Q} and \ref{Prop fund estimate Duhamel term inhom}); hence, we derive some a priori estimates, that will be employed in the proofs of Theorems \ref{Thm GESD} and \ref{Thm LER}. 

\subsection{Estimates for the solution of the homogeneous linear problem}

The next two results will be useful in the treatment of the  solution of the corresponding homogeneous linear problem, when we will apply the contraction principle in order to prove the existence of local (in time) solutions in the subcritical case or the existence of global (in time) small data solutions in the supercritical case, respectively.

\begin{proposition} \label{Prop fund estimate linear term inhom case} Let $\kappa>Q $. Then, for any $t\geq 0$ and $\eta\in \mathbf{H}_n$ the following estimate holds
\begin{align}\label{fundamental estimate linear term inhom case}
e^{t\Delta_{\hor}}\Big(\big(1+|\cdot|_{\mathbf{H}_n}^2\big)^{-\frac{\kappa}{2}}\Big)(\eta)\lesssim \left(1+t+|\eta|_{\mathbf{H}_n}^2\right)^{-\frac{Q}{2}}.
\end{align}
\end{proposition}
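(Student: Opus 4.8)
The plan is to work directly from the convolution representation of the heat semigroup and from the pointwise Gaussian bounds \eqref{heat kernel behavior}, without invoking the semigroup property. Set $v(\eta)=(1+|\eta|_{\mathbf{H}_n}^2)^{-\kappa/2}$, so that
\begin{align*}
e^{t\Delta_{\hor}}v(\eta)=\int_{\mathbf{H}_n}v(\zeta)\,h_t(\zeta^{-1}\circ\eta)\,d\zeta=\int_{\mathbf{H}_n}v(\eta\circ\zeta^{-1})\,h_t(\zeta)\,d\zeta .
\end{align*}
The hypothesis $\kappa>Q$ will be used in exactly one way: it guarantees $v\in L^1(\mathbf{H}_n)$ (the model integral $\int_0^\infty(1+r^2)^{-\kappa/2}r^{Q-1}\,dr$ converges precisely when $\kappa>Q$), while trivially $v\in L^\infty(\mathbf{H}_n)$ with $\|v\|_{L^\infty}=1$. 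I would then split according to the two regimes $|\eta|_{\mathbf{H}_n}^2\le 1+t$ and $|\eta|_{\mathbf{H}_n}^2>1+t$, noting that in each of them $1+t+|\eta|_{\mathbf{H}_n}^2$ is comparable to whichever of $1+t$, $|\eta|_{\mathbf{H}_n}^2$ is larger.

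In the regime $|\eta|_{\mathbf{H}_n}^2\le 1+t$ it suffices to prove $e^{t\Delta_{\hor}}v(\eta)\lesssim(1+t)^{-Q/2}$, and this follows by taking the minimum of two elementary bounds on the convolution: from $\|h_t\|_{L^1(\mathbf{H}_n)}=1$ one gets $e^{t\Delta_{\hor}}v(\eta)\le\|v\|_{L^\infty}=1$, and from the upper Gaussian bound $h_t\le C t^{-Q/2}$ in \eqref{heat kernel behavior} one gets $e^{t\Delta_{\hor}}v(\eta)\le C t^{-Q/2}\|v\|_{L^1}$. The minimum of $1$ and $C t^{-Q/2}\|v\|_{L^1}$ is $\lesssim(1+t)^{-Q/2}$, which is the assertion in this regime.

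The substantive case is $|\eta|_{\mathbf{H}_n}^2>1+t$, where $|\eta|_{\mathbf{H}_n}>1$ and it suffices to prove $e^{t\Delta_{\hor}}v(\eta)\lesssim|\eta|_{\mathbf{H}_n}^{-Q}$. Here I would use the triangular inequality \eqref{triangular ineq gauge}: since $\eta=\zeta\circ(\zeta^{-1}\circ\eta)$ one has $|\eta|_{\mathbf{H}_n}\le|\zeta|_{\mathbf{H}_n}+|\zeta^{-1}\circ\eta|_{\mathbf{H}_n}$, hence $\mathbf{H}_n=A\cup B$ with $A=\{|\zeta|_{\mathbf{H}_n}\ge\tfrac12|\eta|_{\mathbf{H}_n}\}$ and $B=\{|\zeta^{-1}\circ\eta|_{\mathbf{H}_n}\ge\tfrac12|\eta|_{\mathbf{H}_n}\}$, and I would split the integral accordingly. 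On $A$ the weight is small: $v(\zeta)\le(1+\tfrac14|\eta|_{\mathbf{H}_n}^2)^{-\kappa/2}\lesssim|\eta|_{\mathbf{H}_n}^{-\kappa}\le|\eta|_{\mathbf{H}_n}^{-Q}$ (using $\kappa>Q$ and $|\eta|_{\mathbf{H}_n}\ge1$), and integrating $h_t$ against this constant costs only $\|h_t\|_{L^1(\mathbf{H}_n)}=1$. On $B$ the heat kernel is exponentially small: I would use the Gaussian bound and split the exponent, $\exp(-c|\zeta^{-1}\circ\eta|_{\mathbf{H}_n}^2/t)\le\exp(-c|\eta|_{\mathbf{H}_n}^2/(8t))\,\exp(-c|\zeta^{-1}\circ\eta|_{\mathbf{H}_n}^2/(2t))$ on $B$, pull out the first, purely $\eta$-dependent factor, and bound the residual integral $\int_{\mathbf{H}_n} v(\zeta)\exp(-c|\zeta^{-1}\circ\eta|_{\mathbf{H}_n}^2/(2t))\,d\zeta\le\|v\|_{L^1}<\infty$. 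This yields the bound $C t^{-Q/2}\exp(-c|\eta|_{\mathbf{H}_n}^2/(8t))$, and the elementary maximisation $\sup_{t>0}t^{-Q/2}e^{-a/t}=(Q/2e)^{Q/2}a^{-Q/2}$ with $a=c|\eta|_{\mathbf{H}_n}^2/8$ converts it into $\lesssim|\eta|_{\mathbf{H}_n}^{-Q}$. Adding the two pieces closes the estimate.

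I do not expect a deep obstacle: this is a careful bookkeeping exercise. The points to watch are (i) using $\kappa>Q$ precisely to place $v$ in $L^1(\mathbf{H}_n)$, since that is what upgrades the $t^{-Q/2}$ pointwise kernel bound into genuine time decay; (ii) performing the geometric split via \eqref{triangular ineq gauge} so as to decouple ``the initial weight is already small'' from ``the heat kernel is exponentially small''; and (iii) splitting the Gaussian factor so that one half produces the decay in $\eta$ and the other half keeps the $\zeta$-integral convergent. The only mildly technical step is the one-variable maximisation of $t\mapsto t^{-Q/2}e^{-a/t}$, which I would carry out by logarithmic differentiation.
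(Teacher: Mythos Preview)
Your argument is correct and considerably cleaner than the paper's own proof. The paper establishes the same estimate by a lengthy coordinate-based case analysis: writing $\eta=(x,y,\tau)$, it first separates $t+|\eta|_{\mathbf{H}_n}^2\lesssim 1$ from $t+|\eta|_{\mathbf{H}_n}^2\gtrsim 1$, then within the latter distinguishes $|\eta|_{\mathbf{H}_n}^2\geq t$ from $|\eta|_{\mathbf{H}_n}^2\leq t$, and in the former of these further subdivides into a ``$\tau$-dominant'', an ``$x$-dominant'' and a ``$y$-dominant'' subcase according to which of $|\tau|^{1/2}$, $|x|$, $|y|$ is largest, introducing in each an ad hoc partition of the integration domain in the variables $(x',y',\tau')$. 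Your proof bypasses all of this by working purely at the level of the homogeneous norm: the single geometric observation that $|\eta|_{\mathbf{H}_n}\leq |\zeta|_{\mathbf{H}_n}+|\zeta^{-1}\circ\eta|_{\mathbf{H}_n}$ forces either the weight $v(\zeta)$ or the kernel $h_t(\zeta^{-1}\circ\eta)$ to be small, and the elementary calculus fact $\sup_{t>0}t^{-Q/2}e^{-a/t}\simeq a^{-Q/2}$ converts the Gaussian tail directly into the polynomial decay. What your approach buys is a coordinate-free argument that would transfer verbatim to any Carnot group (or indeed any metric measure space with a heat kernel obeying two-sided Gaussian bounds and a triangle inequality for the gauge); what the paper's approach buys is nothing extra here, though its explicit coordinate bookkeeping is in the same spirit as the more delicate estimates needed later in Proposition~\ref{Prop fund estimate Duhamel term inhom}.
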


\begin{proof}
 In the case $t+|\eta|_{\mathbf{H}_n}^2\leq 1$, it suffices to show that $e^{t\Delta_{\hor}}((1+|\cdot|_{\mathbf{H}_n}^2)^{-\frac{\kappa}{2}})(\eta)$ is bounded. Using the uniform boundedness of the $L^1(\mathbf{H}_n)$ norm of $h_t$, we get immediately
\begin{align*}
e^{t\Delta_{\hor}}\Big(\big(1+|\cdot|_{\mathbf{H}_n}^2\big)^{-\frac{\kappa}{2}}\Big)(\eta)= \int_{\mathbf{H}_n}\left(1+|\eta\circ\zeta^{-1}|^2_{\mathbf{H}_n} \right)^{-\frac{\kappa}{2}}h_t(\zeta)\, d\zeta \lesssim  \int_{\mathbf{H}_n} h_t(\zeta)\, d\zeta \lesssim 1.
\end{align*} Hence, we have to prove \eqref{fundamental estimate linear term inhom case} only when $t+|\eta|_{\mathbf{H}_n}^2\geq 1$. Let us begin by proving it in the case $|\eta|_{\mathbf{H}_n}^2\geq t$. We denote $\eta=(x,y,\tau)$, with $x,y\in \mathbb{R}^n$ and $\tau\in \mathbb{R}$. For that purpose, we shall distinguish among three possible subcases that we label $\tau$-dominant case, $x$-dominant case and $y$-dominant case, respectively. In each case, we fix the greatest number among $8|x|,8|y|$ and $|\tau|^{\frac{1}{2}}$ and we name it correspondingly. The reason for the choice of this nomenclature will be clarified during the proof.

\begin{flushleft}
\emph{$\tau$-dominant case}
\end{flushleft}
  We start in the case in which $\tau $ has a dominant role, namely, when $8|x|\leq|\tau|^{\frac{1}{2}}$ and $8|y|\leq|\tau|^{\frac{1}{2}}$. If these relations are satisfied, then, $|\eta|_{\mathbf{H}_n}^2\simeq |\tau|$. Our goal is to estimate the integral
\begin{align*}
& \int_{\mathbf{H}_n}\left(1+|\eta\circ\zeta^{-1}|^2_{\mathbf{H}_n} \right)^{-\frac{\kappa}{2}}h_t(\zeta)\, d\zeta \\ &  \quad  \simeq \int_{\mathbb{R}^{2n+1}}\left(1+|x-x'|^2+|y-y'|^2+|\tau-\tau'+2(x'\cdot y-x\cdot y')|\right)^{-\frac{\kappa}{2}} t^{-\frac{Q}{2}} e^{-\frac{c}{t}\left(|x'|^2+|y'|^2+|\tau'|\right)}  d(x', y',\tau'). 
\end{align*} Let us consider the following $\tau$-dependent partition of $\mathbb{R}^{2n}$:
\begin{align*}
\mathcal{R}_1(\tau) & \doteq \left\{ (x',y')\in \mathbb{R}^{2n}: |x'|\leq |\tau|^{\frac{1}{2}} \ \mbox{and} \  |y'|\leq |\tau|^{\frac{1}{2}}\right\}, \\
\mathcal{R}_2(\tau) & \doteq \left\{ (x',y')\in \mathbb{R}^{2n}: |x'|\geq |\tau|^{\frac{1}{2}} \ \mbox{and} \  |x'|\geq |y'|\right\},\\
\mathcal{R}_3(\tau) & \doteq \left\{ (x',y')\in \mathbb{R}^{2n}: |y'|\geq |\tau|^{\frac{1}{2}} \ \mbox{and} \  |y'|\geq |x'|\right\}.
\end{align*} Since for $(x',y')\in \mathcal{R}_1(\tau)$ it holds $$|x'\cdot y-x\cdot y'|\leq (|x'|+|y'|)\tfrac{|\tau|^{\frac{1}{2}}}{8}\leq \tfrac{|\tau|}{4},$$ we have that $\tau +2(x'\cdot y-x\cdot y')$ is in the interval $\big[\tau-\frac{|\tau|}{2},\tau+\frac{|\tau|}{2}\big]$. So, for $\tau'\in \big[\tau-\frac{3|\tau|}{4},\tau+\frac{3|\tau|}{4}\big]$ the term $\tau +2(x'\cdot y-x\cdot y')$ belongs to the interval where $\tau'$ runs. Hence, we may not consider a nonnegative lower bound but 0 for $|\tau-\tau' +2(x'\cdot y-x\cdot y')|$. Nonetheless, as $\tau\simeq \tau$ we may estimate $e^{-\frac{c}{2t}|\tau'|}\simeq e^{-\frac{c'}{2t}|\tau|}$ in this region. Combining what we have just remarked, we get
\begin{align*}
&   \int_{\mathcal{R}_{1}(\tau)}\int_{\tau -\frac{3}{4}|\tau|}^{\tau +\frac{3}{4}|\tau|}\left(1+|x-x'|^2+|y-y'|^2+|\tau-\tau'+2(x'\cdot y-x\cdot y')|\right)^{-\frac{\kappa}{2}} t^{-\frac{Q}{2}} e^{-\frac{c}{t}\left(|x'|^2+|y'|^2+|\tau'|\right)}  d\tau' \, d(x', y') \\
&  \quad \lesssim t^{-\frac{Q}{2}} e^{-\frac{c}{4t}|\tau|} \int_{\mathcal{R}_{1}(\tau)} \int_{\tau -\frac{3}{4}|\tau|}^{\tau +\frac{3}{4}|\tau|}\left(1+|x-x'|^2+|y-y'|^2+|\tau-\tau'+2(x'\cdot y-x\cdot y')| \right)^{-\frac{\kappa}{2}} e^{-\frac{c}{t}\left(|x'|^2+|y'|^2\right)} \,  d\tau' \, d(x', y') \\
&  \quad \lesssim t^{-\frac{Q}{2}} e^{-\frac{c}{4t}|\tau|} \int_{\mathbb{R}^{2n+1}}\left(1+|x-x'|^2+|y-y'|^2+|\tau-\tau'+2(x'\cdot y-x\cdot y')| \right)^{-\frac{\kappa}{2}}  \, d(x', y',\tau') \\
&  \quad = t^{-\frac{Q}{2}} e^{-\frac{c}{4t}|\tau|} \int_{\mathbb{R}^{2n+1}}\left(1+|x'|^2+|y'|^2+|\tau'| \right)^{-\frac{\kappa}{2}}  \, d(x', y',\tau') \simeq  t^{-\frac{Q}{2}} e^{-\frac{c}{4t}|\tau|} \int_{\mathbf{H}_{n}}\left(1+|\zeta|^2_{\mathbf{H}_n} \right)^{-\frac{\kappa}{2}}  \, d\zeta \lesssim  t^{-\frac{Q}{2}} e^{-\frac{c}{4t}|\tau|}\\
& \quad \lesssim t^{-\frac{Q}{2}}  \left(1+\tfrac{|\tau|}{t}\right)^{-\frac{Q}{2}} =  (t+|\tau|)^{-\frac{Q}{2}}\simeq \left(1+t+|\eta|_{\mathbf{H}_n}^2\right)^{-\frac{Q}{2}}.
\end{align*} Note that in the second last line of the previous chain of inequalities we employed the condition $\kappa>Q$ to guarantee the boundedness of the integral. More specifically, we applied the analogous version of the integration formula  for radial symmetric functions in the Euclidean space in the case of $|\cdot|_{\mathbf{H}_n}$-symmetric functions on the Heisenberg group (cf. \cite[Proposition 5.4.4]{Lan07}, where this formula is proved in the more general frame of homogeneous Carnot groups). Thus, it results
\begin{align*}
 \int_{\mathbf{H}_{n}}\left(1+|\zeta|^2_{\mathbf{H}_n} \right)^{-\frac{\kappa}{2}}  \, d\zeta \simeq \int_{0}^{\infty}\left(1+\varrho^2\right)^{-\frac{\kappa}{2}}\varrho^{Q-1} \, d\varrho \lesssim   \int_{1}^{\infty}\varrho^{-\kappa+Q-1} \, d\varrho \lesssim  1.
\end{align*} On the other hand, for  $\tau'\not \in \big[\tau-\frac{3|\tau|}{4},\tau+\frac{3|\tau|}{4}\big]$, since  $(x',y')\in \mathcal{R}_1(\tau)$ implies $\tau +2(x'\cdot y-x\cdot y')\in \big[\tau-\frac{|\tau|}{2},\tau+\frac{|\tau|}{2}\big]$ as we have pointed out previously, we may estimate from below $|\tau -\tau'+2(x'\cdot y-x\cdot y')| \geq \frac{|\tau|}{4}$. Therefore,
\begin{align*}
&   \int_{\mathcal{R}_{1}(\tau)} \int_{\tau'\not\in \left[\tau-\frac{3|\tau|}{4},\tau+\frac{3|\tau|}{4}\right]}\left(1+|x-x'|^2+|y-y'|^2+|\tau-\tau'+2(x'\cdot y-x\cdot y')|\right)^{-\frac{\kappa}{2}} t^{-\frac{Q}{2}} e^{-\frac{c}{t}\left(|x'|^2+|y'|^2+|\tau'|\right)}  d\tau' \, d(x', y') \\
& \quad \lesssim \int_{\mathcal{R}_{1}(\tau)} \int_{\tau'\not\in \left[\tau-\frac{3|\tau|}{4},\tau+\frac{3|\tau|}{4}\right]}\left(1+|x-x'|^2+|y-y'|^2+|\tau|\right)^{-\frac{\kappa}{2}} t^{-\frac{Q}{2}} e^{-\frac{c}{t}\left(|x'|^2+|y'|^2+|\tau'|\right)}  d\tau' \, d(x', y') \\
& \quad \lesssim \left(1+|\tau|\right)^{-\frac{\kappa}{2}} \int_{\mathbb{R}^{2n+1}}  t^{-\frac{Q}{2}} e^{-\frac{c}{t}\left(|x'|^2+|y'|^2+|\tau'|\right)}  \, d(x', y',\tau' ) = \left(1+|\tau|\right)^{-\frac{\kappa}{2}} \int_{\mathbb{R}^{2n+1}}  e^{-c\left(|x'|^2+|y'|^2+|\tau'|\right)}  \, d(x', y',\tau' ) \\
& \quad  \lesssim \left(1+|\tau|\right)^{-\frac{\kappa}{2}} \simeq  \left(1+t+|\eta|_{\mathbf{H}_n}^2\right)^{-\frac{\kappa}{2}} \lesssim \left(1+t+|\eta|_{\mathbf{H}_n}^2\right)^{-\frac{Q}{2}}.
\end{align*} Until now, we restrict our considerations to the sub-integral with $(x',y')$  in the region $\mathcal{R}_1(\tau)$. Let us investigate the behavior of the sub-integral with domain $\mathcal{R}_2(\tau)\times \mathbb{R}$ (clearly for the integral over $\mathcal{R}_3(\tau)\times \mathbb{R}$ the situation is completely analogous by switching the role of the variables $x'$ and $y'$). By the definition of $\mathcal{R}_2(\tau)$, we get that $|x'|\geq |\tau|^{\frac{1}{2}}$ for $(x',y')\in \mathcal{R}_2(\tau)$. Hence,
\begin{align*}
&   \int_{\mathcal{R}_{2}(\tau)} \int_{\mathbb{R}}\left(1+|x-x'|^2+|y-y'|^2+|\tau-\tau'+2(x'\cdot y-x\cdot y')|\right)^{-\frac{\kappa}{2}} t^{-\frac{Q}{2}} e^{-\frac{c}{t}\left(|x'|^2+|y'|^2+|\tau'|\right)}  d\tau' \, d(x', y') \\
& \quad \lesssim  t^{-\frac{Q}{2}} e^{-\frac{c}{t}|\tau|} \int_{\mathcal{R}_{2}(\tau)} \int_{\mathbb{R}}\left(1+|x-x'|^2+|y-y'|^2+|\tau-\tau'+2(x'\cdot y-x\cdot y')|\right)^{-\frac{\kappa}{2}}e^{-\frac{c}{t}\left(|y'|^2+|\tau'|\right)}  d\tau' \, d(x', y')  \\
& \quad \lesssim  t^{-\frac{Q}{2}} e^{-\frac{c}{t}|\tau|} \int_{\mathbb{R}^{2n+1}}\left(1+|x-x'|^2+|y-y'|^2+|\tau-\tau'+2(x'\cdot y-x\cdot y')|\right)^{-\frac{\kappa}{2}}  \, d(x', y',\tau')  \\
& \quad \simeq   t^{-\frac{Q}{2}} e^{-\frac{c}{t}|\tau|} \int_{\mathbf{H}_{n}}\left(1+|\zeta|^2_{\mathbf{H}_n}\right)^{-\frac{\kappa}{2}}  \, d\zeta \lesssim  t^{-\frac{Q}{2}} e^{-\frac{c}{t}|\tau|} \lesssim (t+|\tau|)^{-\frac{Q}{2}} \simeq \left(1+t+|\eta|_{\mathbf{H}_n}^2\right)^{-\frac{Q}{2}}.
\end{align*} Summarizing, splitting the integral $(1+|\cdot|^2_{\mathbf{H}_n})^{-\frac{\kappa}{2}}\ast h_t$ on the partition $\{\mathcal{R}_j(\tau)\times \mathbb{R}\}_{1\leq j\leq 3}$ of $\mathbf{H}_n$,  we proved \eqref{fundamental estimate linear term inhom case} in the $\tau$-dominant case.

\begin{flushleft}
\emph{$x$-dominant case}
\end{flushleft}

Let us consider the case $|x|\geq |y|$ and $8|x|\geq |\tau|^{\frac{1}{2}}$.
In this case it suffices to split the integral with respect to $x'$ in three different regions. As for $|x'|\leq \frac{|x|}{2}$ or $|x'|\geq 2|x|$ the estimate $|x-x'|\gtrsim |x|$ holds, we get
\begin{align*}
& \int_{\mathbb{R}^{n+1}}\!\int_{\{2|x'| \leq |x|\}\cup \{  |x'|\geq 2|x| \}}\!\!\left(1+|x-x'|^2+|y-y'|^2+|\tau-\tau'+2(x'\cdot y-x\cdot y')|\right)^{-\frac{\kappa}{2}} t^{-\frac{Q}{2}} e^{-\frac{c}{t}\left(|x'|^2+|y'|^2+|\tau'|\right)}  \, dx' \, d(y',\tau')  \\
& \quad \lesssim  t^{-\frac{Q}{2}} \int_{\mathbb{R}^{n+1}}\left(1+|x|^2+|y-y'|^2\right)^{-\frac{\kappa}{2}}   e^{-\frac{c}{t}\left(|y'|^2+|\tau'|\right)} \, d(y',\tau') \int_{\{2|x'| \leq |x|\}\cup \{  |x'|\geq 2|x| \}} e^{-\frac{c}{t}|x'|^2}  \, dx'   \\
&  \quad \lesssim  t^{-\frac{Q}{2}} \left(1+|x|^2\right)^{-\frac{\kappa}{2}}   \int_{\mathbb{R}^{n+1}} e^{-\frac{c}{t}\left(|y'|^2+|\tau'|\right)} \, d(y',\tau')  \int_{\mathbb{R}^n} e^{-\frac{c}{t}|x'|^2}  \, dx'  \\
& \quad \lesssim \left(1+|x|^2\right)^{-\frac{\kappa}{2}}   \int_{\mathbb{R}^{n+1}} e^{-c\left(|y'|^2+|\tau'|\right)} \, d(y',\tau') \int_{\mathbb{R}^n} e^{-c|x'|^2}  \, dx'  \lesssim \left(1+|x|^2\right)^{-\frac{\kappa}{2}} \lesssim  \left(1+t+|\eta|_{\mathbf{H}_n}^2\right)^{-\frac{Q}{2}}.
\end{align*} Otherwise, for $\frac{|x|}{2}\leq |x'| \leq 2|x|$ we use the exponential decay as follows
\begin{align*}
& \int_{\mathbb{R}^{n+1}}\int_{\big\{\frac{|x|}{2} \leq |x'|\leq 2|x| \big\}}\left(1+|x-x'|^2+|y-y'|^2+|\tau-\tau'+2(x'\cdot y-x\cdot y')|\right)^{-\frac{\kappa}{2}} t^{-\frac{Q}{2}} e^{-\frac{c}{t}\left(|x'|^2+|y'|^2+|\tau'|\right)}  \, dx' \, d(y',\tau')  \\
& \quad \lesssim t^{-\frac{Q}{2}} e^{-\frac{c}{4t}|x|^2} \! \!\int_{\mathbb{R}^{n+1}} \int_{\big\{\frac{|x|}{2} \leq |x'|\leq 2|x| \big\}}\!\!\left(1+|x-x'|^2+|y-y'|^2+|\tau-\tau'+2(x'\cdot y-x\cdot y')|\right)^{-\frac{\kappa}{2}}  \! e^{-\frac{c}{t}\left(|y'|^2+|\tau'|\right)}  \, dx' \, d(y',\tau')  \\
& \quad \lesssim t^{-\frac{Q}{2}} e^{-\frac{c}{4t}|x|^2} \int_{\mathbb{R}^{2n+1}} \left(1+|x-x'|^2+|y-y'|^2+|\tau-\tau'+2(x'\cdot y-x\cdot y')|\right)^{-\frac{\kappa}{2}}   \, d(x',y',\tau') \\
& \quad = t^{-\frac{Q}{2}} e^{-\frac{c}{4t}|x|^2} \int_{\mathbb{R}^{2n+1}} \left(1+|x'|^2+|y'|^2+|\tau'|\right)^{-\frac{\kappa}{2}}   \, d(x',y',\tau') \simeq  t^{-\frac{Q}{2}} e^{-\frac{c}{4t}|x|^2} \int_{\mathbf{H}_{n}} \left(1+|\zeta|^2_{\mathbf{H}_n}\right)^{-\frac{\kappa}{2}}   \, d\zeta\\
& \quad  \lesssim t^{-\frac{Q}{2}} e^{-\frac{c}{4t}|x|^2}  \lesssim \left(t+|x|^2\right)^{-\frac{Q}{2}} \simeq  \left(1+t+|\eta|_{\mathbf{H}_n}^2\right)^{-\frac{Q}{2}}.
\end{align*} Also, we proved \eqref{fundamental estimate linear term inhom case} in the $x$-dominant case.

\begin{flushleft}
\emph{$y$-dominant case}
\end{flushleft}

In this case $|y|\geq |x|$ and $8|y|\geq |\tau|^{\frac{1}{2}}$. We can proceed analogously as in the previous case by splitting the domain of integration for $y'$ into $\{|y'|\leq \frac{|y|}{2}\}$, $\{\frac{|y|}{2}\leq |y'|\leq 2|y|\}$ and $\{ |y'|\geq 2|y|\}$ and by swapping the role of $(x,x')$ and $(y,y')$.


\vspace{0.3cm}

So far, we dealt with the case in which we have the inequality $|\eta |_{\mathbf{H}_n}^2\geq t$. When $t$ is dominant, that is, the reverse inequality $|\eta |_{\mathbf{H}_n}^2\leq t$ holds, then \eqref{fundamental estimate linear term inhom case} follows by the estimate $\|h_t\|_{L^\infty(\mathbf{H}_n)}\lesssim t^{-\frac{Q}{2}}$. More precisely,
\begin{align*}
e^{t\Delta_{\hor}}\Big(\big(1+|\cdot|_{\mathbf{H}_n}^2\big)^{-\frac{\kappa}{2}}\Big)(\eta) &= \int_{\mathbf{H}_n}\left(1+|\zeta|^2_{\mathbf{H}_n}\right)^{-\frac{\kappa}{2}} h_t(\zeta^{-1}\circ \eta) \, d\zeta  \lesssim t^{-\frac{Q}{2}} \int_{\mathbf{H}_n}\left(1+|\zeta|^2_{\mathbf{H}_n}\right)^{-\frac{\kappa}{2}} \, d\zeta \\ &  \lesssim  t^{-\frac{Q}{2}} \simeq \left(1+t+|\eta|_{\mathbf{H}_n}^2\right)^{-\frac{Q}{2}}.
\end{align*}  Hence, we completed the proof in all possible subcases.
\end{proof}

\begin{proposition} \label{Prop fund estimate linear term inhom case kappa<Q} Let $\kappa\in (0,Q )$. Then, for any $t\geq 0$ and $\eta\in \mathbf{H}_n$ the following estimate holds
\begin{align}\label{fundamental estimate linear term inhom case kappa<Q}
e^{t\Delta_{\hor}}\Big(\big(1+|\cdot|_{\mathbf{H}_n}^2\big)^{-\frac{\kappa}{2}}\Big)(\eta)\lesssim \left(1+t+|\eta|_{\mathbf{H}_n}^2\right)^{-\frac{\kappa}{2}}.
\end{align}
\end{proposition}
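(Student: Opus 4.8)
The plan is to establish two independent global bounds and combine them. Writing $w(\eta)\doteq\big(1+|\eta|_{\mathbf{H}_n}^2\big)^{-\kappa/2}$ and recalling $e^{t\Delta_{\hor}}w(\eta)=\int_{\mathbf{H}_n}w(\zeta)\,h_t(\zeta^{-1}\circ\eta)\,d\zeta$, I would prove that, for every $t\geq 0$ and $\eta\in\mathbf{H}_n$,
$$e^{t\Delta_{\hor}}w(\eta)\lesssim (1+t)^{-\frac{\kappa}{2}}\qquad\mbox{and}\qquad e^{t\Delta_{\hor}}w(\eta)\lesssim \big(1+|\eta|_{\mathbf{H}_n}^2\big)^{-\frac{\kappa}{2}},$$
and then conclude via $\min\{a^{-\kappa/2},b^{-\kappa/2}\}=(\max\{a,b\})^{-\kappa/2}\leq 2^{\kappa/2}(a+b)^{-\kappa/2}$ with $a=1+t$, $b=1+|\eta|_{\mathbf{H}_n}^2$, noting $a+b\geq 1+t+|\eta|_{\mathbf{H}_n}^2$. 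In both bounds the regime $t+|\eta|_{\mathbf{H}_n}^2\lesssim 1$ is immediate since $e^{t\Delta_{\hor}}w(\eta)\leq\|w\|_{L^\infty(\mathbf{H}_n)}\,\|h_t\|_{L^1(\mathbf{H}_n)}=1$; so one may assume $t\geq 1$ for the first estimate and $|\eta|_{\mathbf{H}_n}^2\geq 1$ for the second.

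For the time-decay estimate I split the integral over $\{|\zeta|_{\mathbf{H}_n}\leq\sqrt t\}$ and its complement. On the complement $w(\zeta)<(1+t)^{-\kappa/2}$, and since $\int_{\mathbf{H}_n}h_t(\zeta^{-1}\circ\eta)\,d\zeta=1$, that piece is $\leq(1+t)^{-\kappa/2}$. On $\{|\zeta|_{\mathbf{H}_n}\leq\sqrt t\}$ I use $h_t\lesssim t^{-Q/2}$ from \eqref{heat kernel behavior} together with the radial integration formula on $\mathbf{H}_n$ (cf.\ \cite[Proposition~5.4.4]{Lan07}): because $\kappa<Q$,
$$\int_{\{|\zeta|_{\mathbf{H}_n}\leq\sqrt t\}}w(\zeta)\,d\zeta\simeq\int_0^{\sqrt t}\big(1+\varrho^2\big)^{-\frac{\kappa}{2}}\varrho^{Q-1}\,d\varrho\lesssim t^{\frac{Q-\kappa}{2}},$$
and multiplying by $t^{-Q/2}$ gives $t^{-\kappa/2}\simeq(1+t)^{-\kappa/2}$.

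For the space-decay estimate (with $|\eta|_{\mathbf{H}_n}^2\geq1$) I split according to whether $|\zeta^{-1}\circ\eta|_{\mathbf{H}_n}\leq\tfrac12|\eta|_{\mathbf{H}_n}$. On that set, applying \eqref{triangular ineq gauge} to $\eta=\zeta\circ(\zeta^{-1}\circ\eta)$ gives $|\zeta|_{\mathbf{H}_n}\geq\tfrac12|\eta|_{\mathbf{H}_n}$, hence $w(\zeta)\lesssim(1+|\eta|_{\mathbf{H}_n}^2)^{-\kappa/2}$, and integrating against $h_t$ costs a factor $1$. On the complement I split once more: where $|\zeta|_{\mathbf{H}_n}\geq\tfrac14|\eta|_{\mathbf{H}_n}$ the same bound on $w(\zeta)$ works; where $|\zeta|_{\mathbf{H}_n}<\tfrac14|\eta|_{\mathbf{H}_n}$, \eqref{triangular ineq gauge} forces $|\zeta^{-1}\circ\eta|_{\mathbf{H}_n}>\tfrac34|\eta|_{\mathbf{H}_n}$, so \eqref{heat kernel behavior} and the elementary identity $\sup_{s>0}s^{-Q/2}e^{-a/s}=C_Q\,a^{-Q/2}$ yield $h_t(\zeta^{-1}\circ\eta)\lesssim|\eta|_{\mathbf{H}_n}^{-Q}$; then, again by $\kappa<Q$ and the radial formula,
$$\int_{\{|\zeta|_{\mathbf{H}_n}<\frac14|\eta|_{\mathbf{H}_n}\}}w(\zeta)\,d\zeta\lesssim\big(1+|\eta|_{\mathbf{H}_n}^2\big)^{\frac{Q-\kappa}{2}},$$
and its product with $|\eta|_{\mathbf{H}_n}^{-Q}$ is $\lesssim(1+|\eta|_{\mathbf{H}_n}^2)^{-\kappa/2}$ since $|\eta|_{\mathbf{H}_n}^2\geq1$.

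The one genuine difference from the proof of Proposition~\ref{Prop fund estimate linear term inhom case} — and what I expect to be the main obstacle — is that for $\kappa\leq Q$ the weight $w$ fails to be integrable on $\mathbf{H}_n$, so the bound $\int_{\mathbf{H}_n}w\,d\zeta\lesssim 1$ used there is no longer available. The remedy above is to confine the weight integral to a gauge-ball of radius $r=\sqrt t$ (time bound) or $r\simeq|\eta|_{\mathbf{H}_n}$ (space bound), on which $\int_{|\zeta|_{\mathbf{H}_n}\leq r}w(\zeta)\,d\zeta\simeq r^{Q-\kappa}$ is finite precisely because $\kappa<Q$, and this growth is exactly balanced by the factor $t^{-Q/2}$, respectively $|\eta|_{\mathbf{H}_n}^{-Q}$, extracted from \eqref{heat kernel behavior}. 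Consequently, in contrast with Proposition~\ref{Prop fund estimate linear term inhom case}, no $\tau$-, $x$- or $y$-dominant case distinction is required: only \eqref{triangular ineq gauge}, the unit $L^1$-mass of $h_t$, and the Gaussian estimate enter.
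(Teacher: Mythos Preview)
Your proof is correct and takes a genuinely different route from the paper's.

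The paper first passes from the inhomogeneous weight $(1+|\cdot|_{\mathbf{H}_n}^2)^{-\kappa/2}$ to the homogeneous one $|\cdot|_{\mathbf{H}_n}^{-\kappa}$ (by monotonicity of $e^{t\Delta_{\hor}}$), and then exploits the exact scaling identity $h_t(\xi)=t^{-Q/2}h_1(\delta_{t^{-1/2}}\xi)$ together with the homogeneity of $|\cdot|_{\mathbf{H}_n}^{-\kappa}$ to reduce the full estimate to the single case $t=1$; that case is then handled by a three-zone splitting in $|\zeta|_{\mathbf{H}_n}$ relative to $|\eta|_{\mathbf{H}_n}$. Your argument bypasses the scaling reduction entirely: you prove the $t$-decay and the $|\eta|_{\mathbf{H}_n}$-decay as two separate global bounds and combine them via the min trick. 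Each of your two bounds uses only the unit $L^1$-mass of $h_t$, the Gaussian upper bound \eqref{heat kernel behavior}, the triangular inequality \eqref{triangular ineq gauge}, and the radial integration formula, together with the ball-volume estimate $\int_{|\zeta|_{\mathbf{H}_n}\le r}w\,d\zeta\lesssim r^{Q-\kappa}$ that hinges precisely on $\kappa<Q$. The paper's approach is more structural---it explains the form of the estimate through exact scaling---while yours is more robust, since it never invokes the scaling identity of the heat kernel and would transfer to settings where only Gaussian bounds (and not exact scale invariance) are available. Your closing remark correctly isolates the one place where the argument for $\kappa>Q$ breaks down and how the restriction to gauge-balls repairs it.
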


\begin{proof}
As in the proof of Proposition \ref{Prop fund estimate linear term inhom case}, we may restrict ourselves to consider the case  $t+|\eta|_{\mathbf{H}_n}^2\geq 1$ (otherwise, we employ again the uniform $L^1(\mathbf{H}_n)$ boundedness of the heat kernel). Actually, in this case it is possible to show the validity of a stronger estimate, namely,
\begin{align}\label{fundamental estimate linear term}
e^{t\Delta_{\hor}}\left(|\cdot|_{\mathbf{H}_n}^{-\kappa}\right)(\eta)\lesssim \left(t+|\eta|_{\mathbf{H}_n}^2\right)^{-\frac{\kappa}{2}} \simeq \left(1+t+|\eta|_{\mathbf{H}_n}^2\right)^{-\frac{\kappa}{2}}.
\end{align} Indeed, if \eqref{fundamental estimate linear term} holds, then, using the positivity of the heat kernel, by the monotonicity of $e^{t\Delta_{\hor}}$ we get immediately \eqref{fundamental estimate linear term inhom case kappa<Q}. The advantage in considering this \emph{homogeneous} inequality rather than \eqref{fundamental estimate linear term inhom case kappa<Q} is that it suffices to show \eqref{fundamental estimate linear term} for $t=1$, namely,
\begin{align} \label{estimate linear term normalized}
e^{\Delta_{\hor}}\left(|\cdot|_{\mathbf{H}_n}^{-\kappa}\right)(\eta)
\lesssim \left(1+|\eta|_{\mathbf{H}_n}^2\right)^{-\frac{\kappa}{2}} \quad \mbox{for any} \ \ \eta\in \mathbf{H}_n.
\end{align} Indeed, by \eqref{scaling property heat kernel normilized} it follows
\begin{align*}
\big(|\cdot|_{\mathbf{H}_n}^{-\kappa}\ast h_t \big)(\eta) &=  \int_{\mathbf{H}_n} h_t(\zeta^{-1} \circ \eta) |\zeta|_{\mathbf{H}_n}^{-\kappa} \, d\zeta  = t^{-\frac{Q}{2}}  \int_{\mathbf{H}_n} h_1\left(\delta_{t^{-1/2}}(\zeta^{-1} \circ \eta)\right) |\zeta|_{\mathbf{H}_n}^{-\kappa} \, d\zeta   \\
& = t^{-\frac{Q}{2}}  \int_{\mathbf{H}_n} h_1\left(\delta_{t^{-1/2}}(\zeta)^{-1} \circ \delta_{t^{-1/2}}(\eta)\right) |\zeta|_{\mathbf{H}_n}^{-\kappa} \, d\zeta    =  \int_{\mathbf{H}_n} h_1\left(\xi^{-1} \circ \delta_{t^{-1/2}}(\eta)\right) |\delta_{t^{1/2}}(\xi)|_{\mathbf{H}_n}^{-\kappa} \, d\xi   \\
& =  t^{-\frac{\kappa}{2}}\int_{\mathbf{H}_n} h_1\left(\xi^{-1} \circ \delta_{t^{-1/2}}(\eta)\right) |\xi|_{\mathbf{H}_n}^{-\kappa} \, d\xi  = t^{-\frac{\kappa}{2}} \big(|\cdot|_{\mathbf{H}_n}^{-\kappa}\ast h_1 \big)(\delta_{t^{-1/2}}(\eta)),
\end{align*}
 where we preformed the change of variables $\xi= \delta_{t^{-1/2}}(\zeta)$ and we used the fact that the anisotropic dilation $\delta_r$ is an isomorphism of Lie group on $\mathbf{H}_n$ with $(\delta_r)^{-1}=\delta_{r^{-1}}$ and the homogeneity of degree 1 for $|\cdot|_{\mathbf{H}_n}$ with respect to anisotropic dilations. Therefore, if we prove \eqref{estimate linear term normalized}, then, it follows that
\begin{align*}
e^{t\Delta_{\hor}}\left(|\cdot|_{\mathbf{H}_n}^{-\kappa}\right)(\eta)& = \big(|\cdot|_{\mathbf{H}_n}^{-\kappa}\ast h_t \big)(\eta) = t^{-\frac{\kappa}{2}} \big(|\cdot|_{\mathbf{H}_n}^{-\kappa}\ast h_1 \big)(\delta_{t^{-1/2}}(\eta))= t^{-\frac{\kappa}{2}} e^{\Delta_{\hor}}\left(|\cdot|_{\mathbf{H}_n}^{-\kappa}\right)(\delta_{t^{-1/2}}(\eta)) \\
& \lesssim t^{-\frac{\kappa}{2}}  \left(1+|\delta_{t^{-1/2}}(\eta)|_{\mathbf{H}_n}^2\right)^{-\frac{\kappa}{2}} =t^{-\frac{\kappa}{2}}  \left(1+t^{-1}|\eta|_{\mathbf{H}_n}^2\right)^{-\frac{\kappa}{2}} =\left(t+|\eta|_{\mathbf{H}_n}^2\right)^{-\frac{\kappa}{2}},
\end{align*} which is exactly \eqref{fundamental estimate linear term}.

So, we prove now \eqref{estimate linear term normalized}. Note that \eqref{triangular ineq gauge} implies the validity of the reverse triangular inequality $$\big||\zeta|_{\mathbf{H}_n}-|\eta|_{\mathbf{H}_n}\big|\leq |\zeta^{-1}\circ \eta|_{\mathbf{H}_n}.$$ We shall employ this fact to split the domain of the integral in the left-hand side of \eqref{estimate linear term normalized} in different zones. Let us begin with the case $|\eta|_{\mathbf{H}_n}\leq 1$. Clearly, in this case it sufficient to show that $e^{\Delta_{\hor}}\left(|\cdot|_{\mathbf{H}_n}^{-\kappa}\right)(\eta)$ is bounded. We split the estimate as follows:
\begin{align*}
e^{\Delta_{\hor}}\left(|\cdot|_{\mathbf{H}_n}^{-\kappa}\right)(\eta)& \lesssim \int_{\mathbf{H}_n} e^{-c|\zeta^{-1} \circ \eta|_{\mathbf{H}_n}^2} |\zeta|_{\mathbf{H}_n}^{-\kappa} \, d\zeta \\ & \lesssim \int_{|\zeta|_{\mathbf{H}_n}\geq 2} e^{-c|\zeta^{-1} \circ \eta|_{\mathbf{H}_n}^2} |\zeta|_{\mathbf{H}_n}^{-\kappa} \, d\zeta +  \int_{|\zeta|_{\mathbf{H}_n}\leq 2} e^{-c|\zeta^{-1} \circ \eta|_{\mathbf{H}_n}^2} |\zeta|_{\mathbf{H}_n}^{-\kappa} \, d\zeta.
\end{align*}
Let us begin with the estimate for the integral away from the origin. Since $|\zeta|_{\mathbf{H}_n}\geq 2 \geq 2|\eta|_{\mathbf{H}_n}$, then, $|\zeta^{-1} \circ \eta|_{\mathbf{H}_n}\geq \frac{1}{2}|\zeta |_{\mathbf{H}_n}$. Therefore,
\begin{align*}
 \int_{|\zeta|_{\mathbf{H}_n}\geq 2} e^{-c|\zeta^{-1} \circ \eta|_{\mathbf{H}_n}^2} |\zeta|_{\mathbf{H}_n}^{-\kappa} \, d\zeta  & \leq  \int_{|\zeta|_{\mathbf{H}_n}\geq 2} e^{-\frac{c}{4}|\zeta|_{\mathbf{H}_n}^2} |\zeta|_{\mathbf{H}_n}^{-\kappa} \, d\zeta \\
 & \leq  2^{-\kappa} \int_{|\zeta|_{\mathbf{H}_n}\geq 2} e^{-\frac{c}{4}|\zeta|_{\mathbf{H}_n}^2} \, d\zeta  \lesssim 1,
\end{align*} where in the last step we used the fact that
\begin{align*}
 \int_{\mathbf{H}_n} e^{-\frac{c}{4}|\zeta|_{\mathbf{H}_n}^2} \, d\zeta  \leq \int_{\mathbb{R}^n}e^{-\frac{c}{8}|x'|^2} \, dx' \int_{\mathbb{R}^n}e^{-\frac{c}{8}|y'|^2} \, dy' \int_{\mathbb{R}}e^{-\frac{c}{8}|\tau'|} \, d\tau'<\infty.
\end{align*} We consider now the integral close to the origin, where the integrand is singular. We have
\begin{align*}
\int_{|\zeta|_{\mathbf{H}_n}\leq 2} e^{-c|\zeta^{-1} \circ \eta|_{\mathbf{H}_n}^2} |\zeta|_{\mathbf{H}_n}^{-\kappa} \, d\zeta & \leq \int_{|\zeta|_{\mathbf{H}_n}\leq 2}  |\zeta|_{\mathbf{H}_n}^{-\kappa} \, d\zeta.
\end{align*}
Using Young's inequality
\begin{align*}
\prod_{j=1}^k a_j\leq \sum_{j=1}^k \frac{a_j^{p_j}}{p_j}
\end{align*} for any positive $a_1,\cdots ,a_k$ under the constrain $\sum_{j=1}^k \frac{1}{p_j}=1$ (this inequality is a straightforward consequence of the concavity of the logarithmic function), we may estimate
\begin{align*}
|\zeta |_{\mathbf{H}_n} \simeq |x'|+|y'|+|\tau'|^{\frac{1}{2}}\gtrsim |x'|^{\frac{n}{Q}}  |y'|^{\frac{n}{Q}}  |\tau'|^{\frac{1}{Q}} \qquad \mbox{for any} \ \ \zeta=(x',y',\tau')\in \mathbf{H}_n.
\end{align*}  Therefore, as $|\zeta|_{\mathbf{H}_n}\leq 2$ implies $|x'|\leq 2, |y'|\leq 2$ and $ |\tau'|\leq 4$, we get
\begin{align*}
\int_{|\zeta|_{\mathbf{H}_n}\leq 2} e^{-c|\zeta^{-1} \circ \eta|_{\mathbf{H}_n}^2} |\zeta|_{\mathbf{H}_n}^{-\kappa} \, d\zeta & \lesssim  \int_{|x'|\leq 2} |x'|^{-\frac{\kappa n}{Q}}\, dx'  \int_{|y'|\leq 2} |y'|^{-\frac{\kappa n}{Q}} \, dy'  \int_{|\tau'|\leq 4} |\tau'|^{-\frac{\kappa }{Q}} \, d\tau' \lesssim 1,
\end{align*} where we used the condition $\kappa <Q$ in order to guarantee the integrability of the singularities in each integral with respect to $x',y'$ and $\tau'$, respectively. So, we proved \eqref{estimate linear term normalized} in the case $|\eta|_{\mathbf{H}_n}\leq 2$. We consider now the case $|\eta|_{\mathbf{H}_n}\geq 2$. In this case, we split the domain of integration in three zones, namely,
\begin{align*}
e^{\Delta_{\hor}}\left(|\cdot|_{\mathbf{H}_n}^{-\kappa}\right)(\eta)& \lesssim \int_{\mathbf{H}_n} e^{-c|\zeta|_{\mathbf{H}_n}^2} | \eta \circ \zeta^{-1} |_{\mathbf{H}_n}^{-\kappa} \, d\zeta \\ & \lesssim \left(  \int_{|\zeta|_{\mathbf{H}_n}\leq \frac{1}{2}|\eta|_{\mathbf{H}_n}} + \int_{|\zeta|_{\mathbf{H}_n}\geq 2|\eta|_{\mathbf{H}_n}}  +  \int_{\frac{1}{2}|\eta|_{\mathbf{H}_n}\leq |\zeta|_{\mathbf{H}_n}\leq 2|\eta|_{\mathbf{H}_n}}\right) e^{-c|\zeta|_{\mathbf{H}_n}^2} | \eta \circ \zeta^{-1} |_{\mathbf{H}_n}^{-\kappa} \, d\zeta.
\end{align*} Using the reverse triangular inequality, we find $| \eta \circ \zeta^{-1} |_{\mathbf{H}_n}\geq \frac{1}{2} | \eta |_{\mathbf{H}_n}$ in the region $|\zeta|_{\mathbf{H}_n}\leq \frac{1}{2}|\eta|_{\mathbf{H}_n}$. Then,
\begin{align*}
\int_{|\zeta|_{\mathbf{H}_n}\leq \frac{1}{2}|\eta|_{\mathbf{H}_n}} e^{-c|\zeta|_{\mathbf{H}_n}^2} | \eta \circ \zeta^{-1} |_{\mathbf{H}_n}^{-\kappa} \, d\zeta & \leq 2^\kappa | \eta |_{\mathbf{H}_n}^{-\kappa}  \int_{|\zeta|_{\mathbf{H}_n}\leq \frac{1}{2}|\eta|_{\mathbf{H}_n}} e^{-c|\zeta|_{\mathbf{H}_n}^2} \, d\zeta \\
& \lesssim  | \eta |_{\mathbf{H}_n}^{-\kappa}  \int_{\mathbf{H}_n} e^{-c|\zeta|_{\mathbf{H}_n}^2} \, d\zeta \lesssim  | \eta |_{\mathbf{H}_n}^{-\kappa}  \simeq \left(1+ | \eta |_{\mathbf{H}_n}^2\right)^{-\frac{\kappa}{2}}.
\end{align*} For $|\zeta|_{\mathbf{H}_n}\geq 2|\eta|_{\mathbf{H}_n}$, we have $ | \eta \circ \zeta^{-1} |_{\mathbf{H}_n}\geq 
|\eta|_{\mathbf{H}_n}$. Thus, proceeding analogously as in the estimate of the previous integral, we obtain
\begin{align*}
\int_{|\zeta|_{\mathbf{H}_n}\geq 2|\eta|_{\mathbf{H}_n}} e^{-c|\zeta|_{\mathbf{H}_n}^2} | \eta \circ \zeta^{-1} |_{\mathbf{H}_n}^{-\kappa} \, d\zeta & \leq  | \eta  |_{\mathbf{H}_n}^{-\kappa}  \int_{|\zeta|_{\mathbf{H}_n}\geq 2|\eta|_{\mathbf{H}_n}} e^{-c|\zeta|_{\mathbf{H}_n}^2}\, d\zeta \lesssim   \left(1+ | \eta |_{\mathbf{H}_n}^2\right)^{-\frac{\kappa}{2}}.
\end{align*} Finally,
\begin{align*}
\int_{\frac{1}{2}|\eta|_{\mathbf{H}_n}\leq |\zeta|_{\mathbf{H}_n}\leq 2|\eta|_{\mathbf{H}_n}}  e^{-c|\zeta|_{\mathbf{H}_n}^2} | \eta \circ \zeta^{-1} |_{\mathbf{H}_n}^{-\kappa} \, d\zeta & \leq   e^{-\frac{c}{4}|\eta|_{\mathbf{H}_n}^2} \int_{\frac{1}{2}|\eta|_{\mathbf{H}_n}\leq |\zeta|_{\mathbf{H}_n}\leq 2|\eta|_{\mathbf{H}_n}}  | \eta \circ \zeta^{-1} |_{\mathbf{H}_n}^{-\kappa} \, d\zeta \\
 & \leq   e^{-\frac{c}{4}|\eta|_{\mathbf{H}_n}^2} \int_{|\xi|_{\mathbf{H}_n}\leq 3 |\eta|_{\mathbf{H}_n}}  | \xi |_{\mathbf{H}_n}^{-\kappa} \, d\zeta,
\end{align*} where we carried out the change of variables $\xi = \zeta \circ \eta^{-1}$ in the last inequality.
 If we denote $\xi=(x',y',\tau')$, then, $$|\xi|_{\mathbf{H}_n}^{-\kappa}\lesssim |x'|^{-\frac{\kappa n}{Q}} |y'|^{-\frac{\kappa n}{Q}} |\tau'|^{-\frac{\kappa }{Q}}.$$ Moreover, $|\xi|_{\mathbf{H}_n}\leq 3 |\eta|_{\mathbf{H}_n}$ implies $|x'|\leq 3|\eta|_{\mathbf{H}_n},  |y'|\leq 3|\eta|_{\mathbf{H}_n} $ and $|\tau'|\leq 9|\eta|_{\mathbf{H}_n}^2$. Also,
\begin{align*}
\int_{|\xi|_{\mathbf{H}_n}\leq 3 |\eta|_{\mathbf{H}_n}}  | \xi |_{\mathbf{H}_n}^{-\kappa} \, d\zeta & \lesssim \int_{|x'|\leq 3 |\eta|_{\mathbf{H}_n}} |x'|^{-\frac{\kappa n}{Q}} \, dx' \int_{|y'|\leq 3 |\eta|_{\mathbf{H}_n}} |y'|^{-\frac{\kappa n}{Q}} \, dy' \int_{|\tau'|\leq 9 |\eta|_{\mathbf{H}_n}^2} |\tau'|^{-\frac{\kappa }{Q}} \, d\tau'   \\
& \lesssim \left( \int_0^{ 3 |\eta|_{\mathbf{H}_n}} r^{-\frac{\kappa n}{Q}+n-1} \, dr \right)^2 \int_0^{9 |\eta|_{\mathbf{H}_n}^2}r^{-\frac{\kappa }{Q}} \, dr  \lesssim | \eta|_{\mathbf{H}_n}^{2n\left(1-\frac{\kappa }{Q}\right)}  | \eta|_{\mathbf{H}_n}^{2\left(1-\frac{\kappa }{Q}\right)} = | \eta|_{\mathbf{H}_n}^{Q-\kappa } ,
\end{align*} where we employed again the condition $\kappa<Q$ in order to estimate the singular integrals in the last inequality. Consequently, we end up with the estimate
\begin{align*}
\int_{\frac{1}{2}|\eta|_{\mathbf{H}_n}\leq |\zeta|_{\mathbf{H}_n}\leq 2|\eta|_{\mathbf{H}_n}}  e^{-c|\zeta|_{\mathbf{H}_n}^2} | \eta \circ \zeta^{-1} |_{\mathbf{H}_n}^{-\kappa} \, d\zeta & \leq e^{-\frac{c}{4}|\eta|_{\mathbf{H}_n}^2} | \eta|_{\mathbf{H}_n}^{Q } | \eta|_{\mathbf{H}_n}^{-\kappa } \lesssim   | \eta|_{\mathbf{H}_n}^{-\kappa } \simeq   \left(1+ | \eta |_{\mathbf{H}_n}^2\right)^{-\frac{\kappa}{2}}.
\end{align*} Summarizing, if we combine the estimates for three subintegrals we find \eqref{estimate linear term normalized} in the case $|\eta|_{\mathbf{H}_n}\geq 2$ as well. This completes the proof.
\end{proof}

\begin{remark} In the statement of Propositions \ref{Prop fund estimate linear term inhom case} and \ref{Prop fund estimate linear term inhom case kappa<Q}  we considered the case $0<\kappa\neq Q$. It is possible to consider the case $\kappa=Q$ as well, provided that a further factor of logarithmic type is included in \eqref{fundamental estimate linear term inhom case}.  However, since in the treatment of the semilinear Cauchy problem we will apply these estimates just in the case $\kappa\neq Q $, we skip further details.
\end{remark}

\subsection{Estimates for Duhamel's integral term}

The next result will be employed in order to deal with Duhamel's integral term in the integral formulation of the Cauchy problem \eqref{semilinear heat Heisenberg} (cf. Definition \ref{Def mild sol} in Section \ref{Section main results}).

\begin{proposition} \label{Prop fund estimate Duhamel term inhom} Let $0<\alpha\leq 1+\frac{Q}{2}$. Then, for any $t\geq 0$ and $\eta\in \mathbf{H}_n$ the following estimate holds
\begin{align} \label{fundamental estimate Duhamel term inhom}
\int_0^t e^{(t-s)\Delta_{\hor}}\left(1+s+|\cdot|_{\mathbf{H}_n}^2\right)^{-\alpha}\, ds \, (\eta) \lesssim \begin{cases} t\left(1+t+|\eta|_{\mathbf{H}_n}^2\right)^{-\alpha} & \mbox{if} \ \ \alpha\in \big(0,1+\tfrac{Q}{2}\big), \\  t\left(1+t+|\eta|_{\mathbf{H}_n}^2\right)^{-\alpha} \log(e+t) & \mbox{if} \ \ \alpha=1+\tfrac{Q}{2}.\end{cases}
\end{align}
\end{proposition}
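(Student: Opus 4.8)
The plan is to reduce the estimate, via the scaling property \eqref{scaling property heat kernel normilized} of the heat kernel, to the a priori bounds on the \emph{normalized} heat evolution $e^{\tau\Delta_{\hor}}\big((1+|\cdot|_{\mathbf{H}_n}^2)^{-\alpha}\big)$ that are already available (Propositions \ref{Prop fund estimate linear term inhom case} and \ref{Prop fund estimate linear term inhom case kappa<Q}), and then to integrate the resulting bound in the time variable $s$. Exactly as in the derivation of \eqref{scaling property heat kernel normilized} (and of the homogeneous estimate in the proof of Proposition \ref{Prop fund estimate linear term inhom case kappa<Q}), the homogeneity of the gauge, the change of variables $\xi=\delta_{(1+s)^{-1/2}}\zeta$ and the scaling relation $h_{t-s}\big(\delta_{(1+s)^{1/2}}w\big)=(1+s)^{-Q/2}h_{\frac{t-s}{1+s}}(w)$ give, for every $s\in[0,t]$,
\begin{align} \label{duhamel scaling identity}
e^{(t-s)\Delta_{\hor}}\big((1+s+|\cdot|_{\mathbf{H}_n}^2)^{-\alpha}\big)(\eta)=(1+s)^{-\alpha}\,e^{\frac{t-s}{1+s}\Delta_{\hor}}\big((1+|\cdot|_{\mathbf{H}_n}^2)^{-\alpha}\big)\big(\delta_{(1+s)^{-1/2}}\eta\big),
\end{align}
where the rescaled arguments $\tilde\tau_s\doteq\frac{t-s}{1+s}$ and $\zeta_s\doteq\delta_{(1+s)^{-1/2}}\eta$ satisfy $1+\tilde\tau_s=\frac{1+t}{1+s}$, $\;1+\tilde\tau_s+|\zeta_s|_{\mathbf{H}_n}^2=\frac{1+t+|\eta|_{\mathbf{H}_n}^2}{1+s}$ and $1+|\zeta_s|_{\mathbf{H}_n}^2=\frac{1+s+|\eta|_{\mathbf{H}_n}^2}{1+s}$.

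When $\alpha<\tfrac Q2$ the proof is immediate: inserting the bound of Proposition \ref{Prop fund estimate linear term inhom case kappa<Q} (with $\kappa=2\alpha$) into \eqref{duhamel scaling identity}, all powers of $(1+s)$ cancel and one obtains the $s$-uniform estimate $e^{(t-s)\Delta_{\hor}}\big((1+s+|\cdot|_{\mathbf{H}_n}^2)^{-\alpha}\big)(\eta)\lesssim(1+t+|\eta|_{\mathbf{H}_n}^2)^{-\alpha}$, and integrating over $s\in[0,t]$ yields $t\,(1+t+|\eta|_{\mathbf{H}_n}^2)^{-\alpha}$. For $\tfrac Q2<\alpha\le1+\tfrac Q2$ Proposition \ref{Prop fund estimate linear term inhom case} is not enough: its decay exponent $\tfrac Q2$ is strictly below $\alpha$, and using it crudely is lossy when $|\eta|_{\mathbf{H}_n}^2\gg t$ (on the time scale $t$ the source is barely spread out there, so the expected decay is $\alpha$, not $\tfrac Q2$). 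I would therefore first establish a \emph{sharpened} linear estimate: for $\alpha>\tfrac Q2$, $t\ge0$, $\eta\in\mathbf{H}_n$,
\begin{align} \label{sharp linear estimate}
e^{t\Delta_{\hor}}\big((1+|\cdot|_{\mathbf{H}_n}^2)^{-\alpha}\big)(\eta)\lesssim(1+t)^{\alpha-\frac Q2}\,(1+t+|\eta|_{\mathbf{H}_n}^2)^{-\alpha},
\end{align}
which refines Proposition \ref{Prop fund estimate linear term inhom case} since $(1+t)^{\alpha-\frac Q2}\le(1+t+|\eta|_{\mathbf{H}_n}^2)^{\alpha-\frac Q2}$. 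Estimate \eqref{sharp linear estimate} is proved by the same region-decomposition used for Propositions \ref{Prop fund estimate linear term inhom case} and \ref{Prop fund estimate linear term inhom case kappa<Q}: one splits $\big((1+|\cdot|_{\mathbf{H}_n}^2)^{-\alpha}\ast h_t\big)(\eta)$ into the part over $\{|\zeta|_{\mathbf{H}_n}\ge\tfrac12|\eta|_{\mathbf{H}_n}\}$, where $(1+|\zeta|_{\mathbf{H}_n}^2)^{-\alpha}\lesssim(1+|\eta|_{\mathbf{H}_n}^2)^{-\alpha}$ and $\|h_t\|_{L^1(\mathbf{H}_n)}=1$ give the full $\alpha$-decay, and the part over $\{|\zeta|_{\mathbf{H}_n}\le\tfrac12|\eta|_{\mathbf{H}_n}\}$, where the reverse triangle inequality following from \eqref{triangular ineq gauge} bounds the kernel by $h_t(\cdot)\lesssim t^{-Q/2}e^{-c|\eta|_{\mathbf{H}_n}^2/(4t)}$ and the profile integrates to a finite constant \emph{because} $\alpha>\tfrac Q2$; balancing the Gaussian factor against the on-diagonal weight $t^{-Q/2}$ (as in the $t$-dominant subcases of Proposition \ref{Prop fund estimate linear term inhom case}) produces the factor $(1+t)^{\alpha-\frac Q2}$.

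Granting \eqref{sharp linear estimate}, I plug it into \eqref{duhamel scaling identity}; using $1+\tilde\tau_s=\frac{1+t}{1+s}$ and $1+\tilde\tau_s+|\zeta_s|_{\mathbf{H}_n}^2=\frac{1+t+|\eta|_{\mathbf{H}_n}^2}{1+s}$ the powers of $(1+s)$ collapse to
\begin{align*}
e^{(t-s)\Delta_{\hor}}\big((1+s+|\cdot|_{\mathbf{H}_n}^2)^{-\alpha}\big)(\eta)\lesssim(1+s)^{\frac Q2-\alpha}\,(1+t)^{\alpha-\frac Q2}\,(1+t+|\eta|_{\mathbf{H}_n}^2)^{-\alpha}.
\end{align*}
Integrating in $s\in[0,t]$ one has $\int_0^t(1+s)^{\frac Q2-\alpha}\,ds\lesssim(1+t)^{1+\frac Q2-\alpha}$ if $\alpha<1+\tfrac Q2$, and $\int_0^t(1+s)^{-1}\,ds=\log(1+t)\le\log(e+t)$ if $\alpha=1+\tfrac Q2$; multiplying by $(1+t)^{\alpha-\frac Q2}$ leaves the factor $(1+t)$, respectively $(1+t)\log(e+t)$, in front of $(1+t+|\eta|_{\mathbf{H}_n}^2)^{-\alpha}$. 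Since $(1+t)\lesssim t$ for $t\ge1$, while for $t\le1$ the bound $t\,(1+t+|\eta|_{\mathbf{H}_n}^2)^{-\alpha}$ follows directly from \eqref{sharp linear estimate} (all elapsed times $\tilde\tau_s$ being $\le1$), this is exactly \eqref{fundamental estimate Duhamel term inhom}. The endpoint $\alpha=\tfrac Q2$, where $(1+|\cdot|_{\mathbf{H}_n}^2)^{-\alpha}$ just fails to be integrable, is treated analogously using the logarithmically-corrected variant of Proposition \ref{Prop fund estimate linear term inhom case} mentioned in the Remark after Proposition \ref{Prop fund estimate linear term inhom case kappa<Q}; the logarithm there acts only on the time-spread ratio and integrates to a constant in $s$, so no $\log$-factor survives in the conclusion for $\alpha=\tfrac Q2<1+\tfrac Q2$.

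The main obstacle is the sharpened linear estimate \eqref{sharp linear estimate}: one must show that the heat flow does not degrade the spatial decay of $(1+|\cdot|_{\mathbf{H}_n}^2)^{-\alpha}$ below rate $\alpha$ away from the origin — only the on-diagonal value drops, by the controlled factor $(1+t)^{\alpha-\frac Q2}$ — and the argument genuinely hinges on the integrability $\int_{\mathbf{H}_n}(1+|\zeta|_{\mathbf{H}_n}^2)^{-\alpha}\,d\zeta<\infty$, valid precisely for $\alpha>\tfrac Q2$. Everything else in the proof reduces to the already-proven a priori estimates together with the elementary $s$-integration above.
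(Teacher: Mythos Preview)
Your approach is genuinely different from the paper's, and cleaner in spirit. The paper proves the estimate by a direct space--time decomposition of the Duhamel integral: it splits $[0,t]$ into $[0,t/2]$ and $[t/2,t]$, and on each piece further decomposes $\mathbf{H}_n$ into the regions $\{|\zeta|_{\mathbf H_n}\le\tfrac12|\eta|_{\mathbf H_n}\}$, $\{\tfrac12|\eta|_{\mathbf H_n}\le|\zeta|_{\mathbf H_n}\le2|\eta|_{\mathbf H_n}\}$, $\{|\zeta|_{\mathbf H_n}\ge2|\eta|_{\mathbf H_n}\}$, estimating each piece with the Gaussian bound \eqref{heat kernel behavior} and reducing the spatial integrals to radial ones in $\mathbb R^{2n+2}$ via the substitutions $s=\sigma^2$, $\tau'=\omega^2$. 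Your route --- the scaling identity \eqref{duhamel scaling identity} followed by a pointwise linear estimate and an elementary $s$-integral --- packages all of this into one step and makes the appearance of the logarithm at $\alpha=1+\tfrac Q2$ transparent (it is exactly the $\int_0^t(1+s)^{-1}ds$). The price you pay is that you must first prove the sharpened linear bound \eqref{sharp linear estimate}; your two--region sketch for it is essentially correct for $|\eta|_{\mathbf H_n}^2\gtrsim t$, while for $|\eta|_{\mathbf H_n}^2\lesssim t$ the split is lossy and one should simply cite Proposition~\ref{Prop fund estimate linear term inhom case} (as your parenthetical remark about the ``$t$-dominant subcases'' suggests you intend).

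There is, however, a concrete gap at the single value $\alpha=\tfrac Q2$. Your claim that the logarithmic correction ``integrates to a constant in $s$'' is not justified. The natural form of the endpoint linear estimate is $e^{\tau\Delta_{\hor}}\big((1+|\cdot|_{\mathbf H_n}^2)^{-Q/2}\big)(\zeta)\lesssim(1+\tau+|\zeta|_{\mathbf H_n}^2)^{-Q/2}\log(e+\tau)$ (already at $\zeta=0$ one computes $\sim\tau^{-Q/2}\log\tau$ for $\tau$ large). After your scaling this becomes $(1+t+|\eta|_{\mathbf H_n}^2)^{-Q/2}\log\big(e+\tfrac{t-s}{1+s}\big)$, and $\int_0^t\log\big(e+\tfrac{t-s}{1+s}\big)\,ds$ is of order $t\log(e+t)$, not $t$; so a spurious log survives and you do \emph{not} recover \eqref{fundamental estimate Duhamel term inhom} for $\alpha=\tfrac Q2$. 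Nor can you sidestep this by monotonicity in $\alpha$, since pushing $\alpha$ down to $\tfrac Q2-\varepsilon$ loses spatial decay, while pushing it up to $\tfrac Q2+\varepsilon$ goes the wrong way pointwise. This endpoint therefore needs a separate argument (the paper's direct decomposition handles it uniformly in $\alpha\in(0,1+\tfrac Q2)$); once you supply one, your proof is complete.
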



\begin{proof} Let us denote
\begin{align*}
I & \doteq \int_0^t e^{(t-s)\Delta_{\hor}}\left(1+s+|\cdot|_{\mathbf{H}_n}^2\right)^{-\alpha}\, ds \, (\eta) =  \int_0^t \int_{\mathbf{H}_n} h_{t-s}(\zeta^{-1}\circ \eta)\left(1+s+|\zeta|_{\mathbf{H}_n}^2\right)^{-\alpha} \, d\zeta \, ds.
\end{align*}
Since the weight function $\left(1+t+|\eta|^2_{\mathbf{H}_n}\right)^{\alpha}$ behaves as a constant for small values of $t+|\eta|^2_{\mathbf{H}_n}$, 
in order to prove \eqref{fundamental estimate Duhamel term inhom}, we will consider separately the case $t+|\eta|^2_{\mathbf{H}_n}\lesssim 1$ and the case $t+|\eta|^2_{\mathbf{H}_n}\gtrsim 1$.

\begin{flushleft}
\emph{Case $t+|\eta|^2_{\mathbf{H}_n}\lesssim 1$}
\end{flushleft}
In this case, it suffices to show that $I\lesssim t$, since $1+t+|\eta|^2_{\mathbf{H}_n}\simeq 1$. By using $\|h_1\|_{L^1(\mathbf{H}_n)}=1$ and $\alpha>0$, we find immediately
\begin{align*}
I &  =  \int_0^t \int_{\mathbf{H}_n} h_{t-s}(\zeta)\left(1+s+|\eta\circ  \zeta^{-1}|_{\mathbf{H}_n}^2\right)^{-\alpha} \, d\zeta \, ds \leq \int_0^t \int_{\mathbf{H}_n} h_{t-s}(\zeta) \, d\zeta \, ds  \leq t.
\end{align*}

\begin{flushleft}
\emph{Case $t+|\eta|^2_{\mathbf{H}_n}\gtrsim 1$}
\end{flushleft}
In this case, it is useful to split the integral $I$ in two parts, namely,
\begin{align*}
I_1 & \doteq  \int_{\frac{t}{2}}^t \int_{\mathbf{H}_n} h_{t-s}(\zeta)\left(1+s+|\eta\circ\zeta^{-1}|^2_{\mathbf{H}_n}\right)^{-\alpha} \, d\zeta \, ds, \\
I_2 & \doteq  \int_0^{\frac{t}{2}} \int_{\mathbf{H}_n}h_{t-s}(\zeta)\left(1+s+|\eta\circ\zeta^{-1}|^2_{\mathbf{H}_n}\right)^{-\alpha}  \, d\zeta \, ds.
\end{align*}
We begin by estimating $I_1$. Let us remark that for $\zeta\in \mathbf{H}_n$ such that $|\zeta|_{\mathbf{H}_n}\leq \frac{|\eta|_{\mathbf{H}_n}}{2}$ or $|\zeta|_{\mathbf{H}_n}\geq 2|\eta|_{\mathbf{H}_n}$ the inequality $|\eta\circ \zeta^{-1}|_{\mathbf{H}_n}\gtrsim |\eta|_{\mathbf{H}_n}$ holds. Also,
\begin{align*}
 \int_{\frac{t}{2}}^t & \int_{\{2|\zeta|_{\mathbf{H}_n}\leq |\eta|_{\mathbf{H}_n}\} \cup \{|\zeta|_{\mathbf{H}_n}\geq 2|\eta|_{\mathbf{H}_n}\}} h_{t-s}(\zeta)\left(1+s+|\eta\circ\zeta^{-1}|^2_{\mathbf{H}_n}\right)^{-\alpha} \, d\zeta \, ds \\
& \quad \qquad \lesssim  \int_{\frac{t}{2}}^t \left(1+s+|\eta|^2_{\mathbf{H}_n}\right)^{-\alpha}  \int_{\{2|\zeta|_{\mathbf{H}_n}\leq |\eta|_{\mathbf{H}_n}\} \cup \{|\zeta|_{\mathbf{H}_n}\geq 2|\eta|_{\mathbf{H}_n}\}} h_{t-s}(\zeta)\, d\zeta \, ds \\
& \quad \qquad \lesssim \left(1+t+|\eta|^2_{\mathbf{H}_n}\right)^{-\alpha} \int_{\frac{t}{2}}^t   \int_{\mathbf{H}_n} h_{t-s}(\zeta)\, d\zeta \, ds \lesssim t \left(1+t+|\eta|^2_{\mathbf{H}_n}\right)^{-\alpha},
\end{align*} where in the last step we used the uniform boundedness of the $L^1(\mathbf{H}_n)$-norm of the heat kernel. On the other hand, we have
\begin{align*}
\int_{\frac{t}{2}}^t & \int_{\{2^{-1} |\eta|_{\mathbf{H}_n} \,\leq |\zeta|_{\mathbf{H}_n}\leq 2 |\eta|_{\mathbf{H}_n}\} } h_{t-s}(\zeta)\left(1+s+|\eta\circ\zeta^{-1}|^2_{\mathbf{H}_n}\right)^{-\alpha} \, d\zeta \, ds \\
& \quad \lesssim \int_{\frac{t}{2}}^t  \int_{\{2^{-1} |\eta|_{\mathbf{H}_n} \,\leq |\zeta|_{\mathbf{H}_n}\leq 2 |\eta|_{\mathbf{H}_n}\} } (t-s)^{-\frac{Q}{2}}e^{-\frac{c}{t-s}|\zeta|^2_{\mathbf{H}_n}}\left(1+s+|\eta\circ\zeta^{-1}|^2_{\mathbf{H}_n}\right)^{-\alpha} \, d\zeta \, ds \\
& \quad \lesssim \left(1+t\right)^{-\alpha} \int_{\frac{t}{2}}^t  \int_{\{2^{-1} |\eta|_{\mathbf{H}_n} \,\leq |\zeta|_{\mathbf{H}_n}\leq 2 |\eta|_{\mathbf{H}_n}\} } (t-s)^{-\frac{Q}{2}}e^{-\frac{c}{t-s}|\zeta|^2_{\mathbf{H}_n}} \, d\zeta \, ds \\
& \quad \lesssim \left(1+t\right)^{-\alpha} \int_{\frac{t}{2}}^t e^{-\frac{c}{8(t-s)}|\eta|^2_{\mathbf{H}_n}}   \int_{\{ 2^{-1} |\eta|_{\mathbf{H}_n} \,\leq |\zeta|_{\mathbf{H}_n}\leq 2 |\eta|_{\mathbf{H}_n}\} } (t-s)^{-\frac{Q}{2}}e^{-\frac{c}{2(t-s)}|\zeta|^2_{\mathbf{H}_n}} \, d\zeta \, ds \\
& \quad \lesssim \left(1+t\right)^{-\alpha} e^{-\frac{c}{4t}|\eta|^2_{\mathbf{H}_n}}    \int_{\frac{t}{2}}^t \int_{\mathbf{H}_n} (t-s)^{-\frac{Q}{2}}e^{-\frac{c}{2(t-s)}|\zeta|^2_{\mathbf{H}_n}} \, d\zeta \, ds .
\end{align*}
Performing the change of variables $\xi=\delta_{(t-s)^{-1/2}}(\zeta)$, it results
\begin{align*}
\int_{\frac{t}{2}}^t & \int_{\{ 2^{-1}|\eta|_{\mathbf{H}_n} \,\leq |\zeta|_{\mathbf{H}_n}\leq 2 |\eta|_{\mathbf{H}_n}\} } h_{t-s}(\zeta)\left(1+s+|\eta\circ\zeta^{-1}|^2_{\mathbf{H}_n}\right)^{-\alpha} \, d\zeta \, ds \\
& \quad \lesssim  \left(1+t\right)^{-\alpha} e^{-\frac{c}{4t}|\eta|^2_{\mathbf{H}_n}}    \int_{\frac{t}{2}}^t \int_{\mathbf{H}_n}e^{-\frac{c}{2}|\zeta|^2_{\mathbf{H}_n}} \, d\zeta \, ds  \lesssim t \left(1+t\right)^{-\alpha} e^{-\frac{c}{4t}|\eta|^2_{\mathbf{H}_n}} \\
& \quad = t \left(1+t+|\eta|^2_{\mathbf{H}_n}\right)^{-\alpha} \left(1+\tfrac{|\eta|^2_{\mathbf{H}_n}}{1+t}\right)^{\alpha} e^{-\frac{c}{4t}|\eta|^2_{\mathbf{H}_n}} \lesssim t \left(1+t+|\eta|^2_{\mathbf{H}_n}\right)^{-\alpha} \left(1+\tfrac{|\eta|^2_{\mathbf{H}_n}}{t}\right)^{\alpha} e^{-\frac{c}{4t}|\eta|^2_{\mathbf{H}_n}}  \\
& \quad \lesssim t \left(1+t+|\eta|^2_{\mathbf{H}_n}\right)^{-\alpha}.
\end{align*} Summarizing, we  proved that $I_1\lesssim t \left(1+t+|\eta|^2_{\mathbf{H}_n}\right)^{-\alpha} $. Note that in order to derive this estimate for $I_1$ we did not consider separately the case $\alpha<1+Q/2$ from the limit case $\alpha=1+Q/2$.  The next step is to prove the validity of the same type of estimate but now for the integral $I_2$. As in the previous case, we need to divide $\mathbf{H}_n$ in different regions. Let us begin with the estimate of integral on the sub-region $\{\zeta\in \mathbf{H}_n: |\zeta|_{\mathbf{H}_n}\leq 2^{-1}  |\eta|_{\mathbf{H}_n}\}$. In the case $ |\eta|_{\mathbf{H}_n}^2\geq t$, we find
\begin{align*}
\int_0^{\frac{t}{2}} & \int_{\{ |\zeta|_{\mathbf{H}_n}\leq 2^{-1} |\eta|_{\mathbf{H}_n}\} } h_{t-s}(\zeta)\left(1+s+|\eta\circ\zeta^{-1}|^2_{\mathbf{H}_n}\right)^{-\alpha} \, d\zeta \, ds \\
& \quad \lesssim \int_0^{\frac{t}{2}} \left(1+s+|\eta|^2_{\mathbf{H}_n}\right)^{-\alpha} \int_{\{ |\zeta|_{\mathbf{H}_n}\leq 2^{-1} |\eta|_{\mathbf{H}_n}\} } h_{t-s}(\zeta) \, d\zeta \, ds \\
& \quad \lesssim \left(1+|\eta|^2_{\mathbf{H}_n}\right)^{-\alpha} \int_0^{\frac{t}{2}}  \int_{\mathbf{H}_n} h_{t-s}(\zeta) \, d\zeta \, ds \lesssim t \left(1+|\eta|^2_{\mathbf{H}_n}\right)^{-\alpha} \simeq t \left(1+t+|\eta|^2_{\mathbf{H}_n}\right)^{-\alpha} .
\end{align*} Otherwise, if $t\geq |\eta|_{\mathbf{H}_n}^2$, then,
\begin{align*}
\int_0^{\frac{t}{2}} & \int_{\{ |\zeta|_{\mathbf{H}_n}\leq 2^{-1} |\eta|_{\mathbf{H}_n}\} } h_{t-s}(\zeta)\left(1+s+|\eta\circ\zeta^{-1}|^2_{\mathbf{H}_n}\right)^{-\alpha} \, d\zeta \, ds \\
& \quad \lesssim \int_0^{\frac{t}{2}} \int_{\{ |\zeta|_{\mathbf{H}_n}\leq 2^{-1} |\eta|_{\mathbf{H}_n}\} } (t-s)^{-\frac{Q}{2}}e^{-\frac{c}{(t-s)}|\zeta|^2_{\mathbf{H}_n}}   \left(1+s+|\zeta|^2_{\mathbf{H}_n}\right)^{-\alpha}\, d\zeta \, ds \\
& \quad \lesssim t^{-\frac{Q}{2}} \int_0^{\frac{t}{2}} \int_{\{ |\zeta|_{\mathbf{H}_n}\leq 2^{-1} |\eta|_{\mathbf{H}_n}\} }   \left(1+s+|\zeta|^2_{\mathbf{H}_n}\right)^{-\alpha}\, d\zeta \, ds \\
& \quad \lesssim  t^{-\frac{Q}{2}} \int_0^{\frac{t}{2}} \int_0^{2^{-1} |\eta|_{\mathbf{H}_n}} \iint_{ |(x',y')|\leq 2^{-1} |\eta|_{\mathbf{H}_n} }   \left(1+s+|x'|^2+|y'|^2+\tau'\right)^{-\alpha}\, d(x',y')\, d\tau' \, ds .
\end{align*} Carrying out the change of variables $s=\sigma^2$ and $\tau'=\omega^2$, from the last estimate in the case $\alpha\in\left(0,1+\frac{Q}{2}\right)$ we find
\begin{align*}
\int_0^{\frac{t}{2}} & \int_{\{ |\zeta|_{\mathbf{H}_n}\leq 2^{-1} |\eta|_{\mathbf{H}_n}\} } h_{t-s}(\zeta)\left(1+s+|\eta\circ\zeta^{-1}|^2_{\mathbf{H}_n}\right)^{-\alpha} \, d\zeta \, ds \\
& \quad \lesssim t^{-\frac{Q}{2}} \iiiint_{ |(x',y',\omega,\sigma)|^2\lesssim \, t+|\eta|_{\mathbf{H}_n}^2 }   \left(1+|\sigma|^2+|x'|^2+|y'|^2+|\omega|^2\right)^{-\alpha} \sigma\omega \, d(x',y',\omega,\sigma) \\
& \quad \lesssim t^{-\frac{Q}{2}} \iiiint_{ |(x',y',\omega,\sigma)|^2\lesssim \, t+|\eta|_{\mathbf{H}_n}^2 }   \left(1+|\sigma|^2+|x'|^2+|y'|^2+|\omega|^2\right)^{-\alpha+1} \, d(x',y',\omega,\sigma)  \\
& \quad \lesssim t^{-\frac{Q}{2}} \int_{0<\, \varrho\,  \lesssim \, (t+|\eta|_{\mathbf{H}_n}^2)^{1/2} }   \left(1+\varrho^2\right)^{-\alpha+1} \varrho^{Q-1}\, d\varrho \lesssim t^{-\frac{Q}{2}} \int_{1<\, \varrho\,  \lesssim \, (1+t+|\eta|_{\mathbf{H}_n}^2)^{1/2} }   \varrho^{-2\alpha+Q+1}\, d\varrho \\
 & \quad \lesssim t^{-\frac{Q}{2}} (1+t+|\eta|_{\mathbf{H}_n}^2)^{-\alpha+\frac{Q}{2}+1} \simeq t \, (1+t+|\eta|_{\mathbf{H}_n}^2)^{-\alpha},
\end{align*} where we used the condition $-2\alpha+Q+1>-1$ and the equivalence $1+t+|\eta|_{\mathbf{H}_n}^2\simeq t$ for $|\eta|^2_{\mathbf{H}_n}\leq t$.  In the limit case $\alpha=1+Q/2$, for $|\eta|^2_{\mathbf{H}_n}\leq t$ we have to include a logarithmic term in the previous estimate, namely,
\begin{align*}
\int_0^{\frac{t}{2}} \int_{\{ |\zeta|_{\mathbf{H}_n}\leq 2^{-1} |\eta|_{\mathbf{H}_n}\} } h_{t-s}(\zeta)\left(1+s+|\eta\circ\zeta^{-1}|^2_{\mathbf{H}_n}\right)^{-\alpha}\, d\zeta \, ds  & \lesssim t^{-\frac{Q}{2}} \log (e+t) =  t^{1-\alpha} \log (e+t) \\ & \simeq  t \, (1+t+|\eta|_{\mathbf{H}_n}^2)^{-\alpha} \log (e+t)  .
\end{align*}
 We proceed now with the estimate of the integral in the intermediate sub-region  $\{\zeta\in \mathbf{H}_n:  2^{-1}  |\eta|_{\mathbf{H}_n} \leq |\zeta|_{\mathbf{H}_n}\leq 2  |\eta|_{\mathbf{H}_n}\}$. Since in this region for $s\in [0,\frac{t}{2}]$ we may estimate $(t-s)^{-\frac{Q}{2}}\simeq t^{-\frac{Q}{2}}$ and $e^{-\frac{c}{t-s}|\zeta|_{\mathbf{H}_n}^2}\leq e^{-\frac{c}{4t}|\eta|_{\mathbf{H}_n}^2}$, then, it results
\begin{align*}
& \int_0^{\frac{t}{2}}  \int_{\{  2^{-1}  |\eta|_{\mathbf{H}_n} \leq |\zeta|_{\mathbf{H}_n}\leq 2  |\eta|_{\mathbf{H}_n}\} } h_{t-s}(\zeta)\left(1+s+|\eta\circ\zeta^{-1}|^2_{\mathbf{H}_n}\right)^{-\alpha} \, d\zeta \, ds \\
& \qquad \lesssim \int_0^{\frac{t}{2}}  \int_{\{  2^{-1}  |\eta|_{\mathbf{H}_n} \leq |\zeta|_{\mathbf{H}_n}\leq 2  |\eta|_{\mathbf{H}_n}\} } (t-s)^{-\frac{Q}{2}} e^{-\frac{c}{t-s}|\zeta|_{\mathbf{H}_n}^2} \left(1+s+|\eta\circ\zeta^{-1}|^2_{\mathbf{H}_n}\right)^{-\alpha} \, d\zeta \, ds \\
& \qquad \lesssim t^{-\frac{Q}{2}} e^{-\frac{c}{4t}|\eta|_{\mathbf{H}_n}^2}  \int_0^{\frac{t}{2}}  \int_{\{  2^{-1}  |\eta|_{\mathbf{H}_n} \leq |\zeta|_{\mathbf{H}_n}\leq 2  |\eta|_{\mathbf{H}_n}\} }  \left(1+s+|\eta\circ\zeta^{-1}|^2_{\mathbf{H}_n}\right)^{-\alpha} \, d\zeta \, ds  \\
& \qquad \lesssim t^{-\frac{Q}{2}} e^{-\frac{c}{4t}|\eta|_{\mathbf{H}_n}^2}  \int_0^{\frac{t}{2}}  \int_{\{   |\xi|_{\mathbf{H}_n}\leq 3  |\eta|_{\mathbf{H}_n}\} }  \left(1+s+|\xi|^2_{\mathbf{H}_n}\right)^{-\alpha} \, d\xi \, ds ,
\end{align*} where we employed the change of variables $\xi= \zeta\circ \eta^{-1}$ in the last step (note that  $2^{-1}  |\eta|_{\mathbf{H}_n} \leq |\zeta|_{\mathbf{H}_n}\leq 2  |\eta|_{\mathbf{H}_n}$ implies $ |\xi|_{\mathbf{H}_n}\leq 3  |\eta|_{\mathbf{H}_n}$ and $d\xi=d\zeta$). If we denote $\xi=(x',y'\tau')$, then, since $|\xi|_{\mathbf{H}_n}\simeq |(x',y')|+|\tau'|^{\frac{1}{2}}$ we have
\begin{align*}
& \int_0^{\frac{t}{2}}  \int_{\{  2^{-1}  |\eta|_{\mathbf{H}_n} \leq |\zeta|_{\mathbf{H}_n}\leq 2  |\eta|_{\mathbf{H}_n}\} } h_{t-s}(\zeta)\left(1+s+|\eta\circ\zeta^{-1}|^2_{\mathbf{H}_n}\right)^{-\alpha} \, d\zeta \, ds \\
& \qquad \lesssim t^{-\frac{Q}{2}} e^{-\frac{c}{4t}|\eta|_{\mathbf{H}_n}^2}  \int_0^{\frac{t}{2}} \int_{|\tau'|\lesssim |\eta|_{\mathbf{H}_n}^2} \iint_{  |(x',y')| \lesssim  |\eta|_{\mathbf{H}_n} }  \left(1+s+|\tau'|+|x'|^2+|y'|^2\right)^{-\alpha}\, d(x',y') \, d\tau' \, ds \\
& \qquad \lesssim t^{-\frac{Q}{2}} e^{-\frac{c}{4t}|\eta|_{\mathbf{H}_n}^2}  \int_0^{t^{1/2}} \int_{|\omega|\lesssim |\eta|_{\mathbf{H}_n}} \iint_{  |(x',y')| \lesssim  |\eta|_{\mathbf{H}_n} }  \left(1+\sigma^2+\omega^2+|x'|^2+|y'|^2\right)^{-\alpha} \sigma \omega\, d(x',y') \, d\omega \, d\sigma \\
& \qquad \lesssim t^{-\frac{Q}{2}} e^{-\frac{c}{4t}|\eta|_{\mathbf{H}_n}^2} \iiiint_{  |(x',y',\omega,\sigma)| \lesssim (t+ |\eta|_{\mathbf{H}_n}^2)^{1/2} }  \left(1+\sigma^2+\omega^2+|x'|^2+|y'|^2\right)^{-\alpha+1}\, d(x',y') \, d\omega \, d\sigma \\
& \qquad \lesssim t^{-\frac{Q}{2}} e^{-\frac{c}{4t}|\eta|_{\mathbf{H}_n}^2} \int_{ 0<\varrho\lesssim  (t+ |\eta|_{\mathbf{H}_n}^2)^{1/2} }  \left(1+\varrho^2\right)^{-\alpha+1} \varrho^{Q-1}\, d\varrho \lesssim t^{-\frac{Q}{2}} e^{-\frac{c}{4t}|\eta|_{\mathbf{H}_n}^2} \int_{ 1<\varrho\lesssim  (1+t+ |\eta|_{\mathbf{H}_n}^2)^{1/2} } \varrho^{-2\alpha+Q+1}\, d\varrho,
\end{align*} where we carried out the change of variables $s=\sigma^2$ and $\tau'=\omega^2$ in the second inequality and we reduce the resulting integral to the integral of a radial symmetric function with $2n+2$ variables. As we have already noticed, for $\alpha\in \left(0,1+Q/2\right)$ the power of the integrand in the last integral is greater than $-1$, therefore, we obtain
\begin{align*}
& \int_0^{\frac{t}{2}}  \int_{\{  2^{-1}  |\eta|_{\mathbf{H}_n} \leq |\zeta|_{\mathbf{H}_n}\leq 2  |\eta|_{\mathbf{H}_n}\} } h_{t-s}(\zeta)\left(1+s+|\eta\circ\zeta^{-1}|^2_{\mathbf{H}_n}\right)^{-\alpha} \, d\zeta \, ds \\
& \qquad  \lesssim  t^{-\frac{Q}{2}} e^{-\frac{c}{4t}|\eta|_{\mathbf{H}_n}^2} \left(1+t+ |\eta|_{\mathbf{H}_n}^2\right)^{-\alpha+\frac{Q}{2}+1} \simeq t \left(1+t+ |\eta|_{\mathbf{H}_n}^2\right)^{-\alpha}  t^{-\frac{Q}{2}-1}  \left(t+ |\eta|_{\mathbf{H}_n}^2\right)^{\frac{Q}{2}+1} e^{-\frac{c}{4t}|\eta|_{\mathbf{H}_n}^2} \\
& \qquad   \lesssim t \left(1+t+ |\eta|_{\mathbf{H}_n}^2\right)^{-\alpha}.
\end{align*}
 On the other hand, in the limit case $\alpha=1+Q/2$, we get an extra logarithmic terms, namely,
\begin{align*}
 \int_0^{\frac{t}{2}} &  \int_{\{  2^{-1}  |\eta|_{\mathbf{H}_n} \leq |\zeta|_{\mathbf{H}_n}\leq 2  |\eta|_{\mathbf{H}_n}\} } h_{t-s}(\zeta)\left(1+s+|\eta\circ\zeta^{-1}|^2_{\mathbf{H}_n}\right)^{-\alpha} \, d\zeta \, ds \\
& \qquad  \lesssim t^{-\frac{Q}{2}} e^{-\frac{c}{4t}|\eta|_{\mathbf{H}_n}^2} \int_{ 1<\varrho \lesssim  (1+t+ |\eta|_{\mathbf{H}_n}^2)^{1/2} } \varrho^{-1}\, d\varrho  \simeq t^{-\frac{Q}{2}} e^{-\frac{c}{4t}|\eta|_{\mathbf{H}_n}^2} \log\left(1+t+ |\eta|_{\mathbf{H}_n}^2\right) \\
& \qquad \lesssim t \left(1+t+ |\eta|_{\mathbf{H}_n}^2\right)^{-\frac{Q}{2}-1}   \left(1+\tfrac{ |\eta|_{\mathbf{H}_n}^2}{t}\right)^{\frac{Q}{2}+1} e^{-\frac{c}{4t}|\eta|_{\mathbf{H}_n}^2} \log\left(2\left(t+ |\eta|_{\mathbf{H}_n}^2\right)\right) \\
& \qquad \simeq t \left(1+t+ |\eta|_{\mathbf{H}_n}^2\right)^{-\alpha}   \left(1+\tfrac{ |\eta|_{\mathbf{H}_n}^2}{t}\right)^{\frac{Q}{2}+1} e^{-\frac{c}{4t}|\eta|_{\mathbf{H}_n}^2} \left(\log\left(2\left(1+ \tfrac{|\eta|_{\mathbf{H}_n}^2}{t}\right)\right)+\log t\right) \\
& \qquad \lesssim t \left(1+t+ |\eta|_{\mathbf{H}_n}^2\right)^{-\alpha}  \log (e+t).
\end{align*} Finally, we estimate the integral in the sub-region
$\{\zeta\in \mathbf{H}_n:  |\zeta|_{\mathbf{H}_n}\geq 2  |\eta|_{\mathbf{H}_n}\}$. We have
\begin{align*}
& \int_0^{\frac{t}{2}}  \int_{\{   |\zeta|_{\mathbf{H}_n}\geq 2  |\eta|_{\mathbf{H}_n}\} } h_{t-s}(\zeta)\left(1+s+|\eta\circ\zeta^{-1}|^2_{\mathbf{H}_n}\right)^{-\alpha} \, d\zeta \, ds  \\
& \qquad \lesssim  \int_0^{\frac{t}{2}}  \int_{\{   |\zeta|_{\mathbf{H}_n}\geq 2  |\eta|_{\mathbf{H}_n}\} }(t-s)^{-\frac{Q}{2}} e^{-\frac{c}{t-s}|\zeta|^2_{\mathbf{H}_n}}\left(1+s+|\eta\circ\zeta^{-1}|^2_{\mathbf{H}_n}\right)^{-\alpha} \, d\zeta \, ds \\
& \qquad \lesssim t^{-\frac{Q}{2}} \int_0^{\frac{t}{2}}  \int_{\{   |\zeta|_{\mathbf{H}_n}\geq 2  |\eta|_{\mathbf{H}_n}\} } e^{-\frac{c}{t}|\zeta|^2_{\mathbf{H}_n}}\left(1+s+|\zeta|^2_{\mathbf{H}_n}\right)^{-\alpha} \, d\zeta \, ds \\
& \qquad \simeq t^{-\frac{Q}{2}} \int_0^{\frac{t}{2}}  \iiint_{ |(x',y',\tau')|_{\mathbf{H}_n}\geq 2  |\eta|_{\mathbf{H}_n} } e^{-\frac{c}{t}(s+|(x',y',\tau')|^2_{\mathbf{H}_n})}\left(1+s+|\tau'|+|x'|^2+|y'|^2\right)^{-\alpha} \, d(x',y',\tau') \, ds  \\
& \qquad \lesssim t^{-\frac{Q}{2}} \int_0^{t^{1/2}}  \iiint_{   |(x',y',\omega^2)|_{\mathbf{H}_n}\geq 2  |\eta|_{\mathbf{H}_n} } e^{-\frac{c}{t}(\sigma^2+|(x',y',\omega^2)|^2_{\mathbf{H}_n})}\left(1+\sigma^2+|\omega|^2+|x'|^2+|y'|^2\right)^{-\alpha} \sigma\omega\, d(x',y',\omega
) \, d\sigma \\
& \qquad \lesssim t^{-\frac{Q}{2}} \iiiint_{  |(x',y',\omega,\sigma)|\gtrsim  |\eta|_{\mathbf{H}_n} } e^{-\frac{c}{2t}(\sigma^2+|x'|^2+|y'|^2+|\omega|^2)}\left(1+\sigma^2+|\omega|^2+|x'|^2+|y'|^2\right)^{-\alpha+1} \, d(x',y',\omega,\sigma),
\end{align*} where we applied the usual change of variables $s=\sigma^2$ and $\tau'=\omega^2$. As in the last integral the function is radial symmetric, we get
\begin{align}
& \int_0^{\frac{t}{2}}  \int_{\{   |\zeta|_{\mathbf{H}_n}\geq 2  |\eta|_{\mathbf{H}_n}\} } h_{t-s}(\zeta)\left(1+s+|\eta\circ\zeta^{-1}|^2_{\mathbf{H}_n}\right)^{-\alpha} \, d\zeta \, ds \notag \\
& \qquad \lesssim t^{-\frac{Q}{2}} \int_{ \varrho \gtrsim  |\eta|_{\mathbf{H}_n} } e^{-\frac{c}{2t}\varrho^2}\left(1+\varrho^2\right)^{-\alpha+1} \varrho^{Q-1}\, d\varrho\lesssim t^{-\frac{Q}{2}} \int_{  \varrho\gtrsim  |\eta|_{\mathbf{H}_n} } e^{-\frac{c}{2t}\varrho^2}\varrho^{-2\alpha+Q+1}\, d\varrho \notag \\
& \qquad \lesssim t^{1-\alpha} \int_{  \varrho\gtrsim \frac{ |\eta|_{\mathbf{H}_n}}{\sqrt{t}} } e^{-\frac{cR^2}{2}}R^{-2\alpha+Q+1}\, dR \lesssim t^{1-\alpha}  e^{-\frac{c'}{t}|\eta|^2_{\mathbf{H}_n}} \int_{  \varrho\gtrsim \frac{ |\eta|_{\mathbf{H}_n}}{\sqrt{t}} } e^{-\frac{cR^2}{4}}R^{-2\alpha+Q+1}\, dR. \label{estimate I2 large zeta}
\end{align}
 For $0<\alpha<1+Q/2$ since the power $-2\alpha+Q+1$ is strictly greater than $-1$, we have that $e^{-\frac{cR}{4}}R^{-2\alpha+Q+1}\in L^1(\mathbb{R}_+)$ and, consequently, \eqref{estimate I2 large zeta} implies
\begin{align*}
 \int_0^{\frac{t}{2}}  \int_{\{   |\zeta|_{\mathbf{H}_n}\geq 2  |\eta|_{\mathbf{H}_n}\} } h_{t-s}(\zeta)\left(1+s+|\eta\circ\zeta^{-1}|^2_{\mathbf{H}_n}\right)^{-\alpha} \, d\zeta \, ds & \lesssim t^{1-\alpha}  e^{-\frac{c'}{t}|\eta|^2_{\mathbf{H}_n}} \\ & \lesssim t \left(1+t+|\eta|^2_{\mathbf{H}_n}\right)^{-\alpha} \left(1+\tfrac{|\eta|^2_{\mathbf{H}_n}}{t}\right)^{\alpha} e^{-\frac{c'}{t}|\eta|^2_{\mathbf{H}_n}}  \\ &\lesssim t \left(1+t+|\eta|^2_{\mathbf{H}_n}\right)^{-\alpha}.
\end{align*}  In the limit case $\alpha=1+Q/2$, we distinguish two subcases. When $|\eta|^2_{\mathbf{H}_n}\geq t$, since in the integral in the right-hand side of \eqref{estimate I2 large zeta} we are away from 0, it follows that $e^{-\frac{cR}{4}}R^{-1}$ is summable. Therefore, we may repeat exactly the same estimates as in the previous case. On the other hand, if $|\eta|^2_{\mathbf{H}_n}\leq t$, we need to modify slightly \eqref{estimate I2 large zeta} as follows:
\begin{align}
& \int_0^{\frac{t}{2}}  \int_{\{   |\zeta|_{\mathbf{H}_n}\geq 2  |\eta|_{\mathbf{H}_n}\} } h_{t-s}(\zeta)\left(1+s+|\eta\circ\zeta^{-1}|^2_{\mathbf{H}_n}\right)^{-\alpha} \, d\zeta \, ds \notag \\
& \qquad \lesssim t^{-\frac{Q}{2}} \int_{ \varrho \gtrsim  |\eta|_{\mathbf{H}_n} } e^{-\frac{c}{2t}\varrho^2}\left(1+\varrho^2\right)^{-\alpha+1} \varrho^{Q-1}\, d\varrho \lesssim t^{-\frac{Q}{2}}   e^{-\frac{c'}{t}|\eta|^2_{\mathbf{H}_n}} \int_{  \varrho\gtrsim  |\eta|_{\mathbf{H}_n} } e^{-\frac{c}{4t}(\varrho^2+1)}e^{\frac{c}{4t}}(1+\varrho)^{-2\alpha+Q+1}\, d\varrho \notag \\
& \qquad \lesssim t^{-\frac{Q}{2}}   e^{-\frac{c'}{t}|\eta|^2_{\mathbf{H}_n}} \int_{  \varrho\gtrsim  |\eta|_{\mathbf{H}_n} } e^{-\frac{c}{8t}(\varrho+1)^2}(1+\varrho)^{-1}\, d\varrho  \lesssim t^{-\frac{Q}{2}}   e^{-\frac{c'}{t}|\eta|^2_{\mathbf{H}_n}}  \int_{  R\gtrsim \frac{1+ |\eta|_{\mathbf{H}_n}}{\sqrt{t}} } e^{-\frac{cR^2}{8}}R^{-1}\, dR . \label{estimate I2 large zeta 2nd case}
\end{align} Note that in the previous chain of inequalities we used the fact that $e^{\frac{c}{4t}}$ is a bounded function in the case $|\eta|^2_{\mathbf{H}_n}\leq t$. Indeed, in the case that we are considering it holds $t\gtrsim 1$ (keep in mind that we are in the case $t+|\eta|_{\mathbf{H}_n}^2\gtrsim 1$). Once again, if the lower bound of the domain of integration in the last integral is grater than 1, we get
\begin{align*}
\int_0^{\frac{t}{2}}  \int_{\{   |\zeta|_{\mathbf{H}_n}\geq 2  |\eta|_{\mathbf{H}_n}\} } h_{t-s}(\zeta)\left(1+s+|\eta\circ\zeta^{-1}|^2_{\mathbf{H}_n}\right)^{-\alpha} \, d\zeta \, ds &\lesssim  t^{-\frac{Q}{2}}   e^{-\frac{c'}{t}|\eta|^2_{\mathbf{H}_n}} \\ &\lesssim  t \left(1+t+|\eta|^2_{\mathbf{H}_n}\right)^{-\frac{Q}{2}-1}=  t \left(1+t+|\eta|^2_{\mathbf{H}_n}\right)^{-\alpha},
\end{align*} otherwise,
\begin{align*}
 \int_{  R\gtrsim \frac{1+ |\eta|_{\mathbf{H}_n}}{\sqrt{t}} } e^{-\frac{cR^2}{8}}R^{-1}\, dR & =   \int_{ 1}^\infty e^{-\frac{cR^2}{8}}R^{-1}\, dR +  \int_{   \frac{1+ |\eta|_{\mathbf{H}_n}}{\sqrt{t}} \lesssim R <1} e^{-\frac{cR^2}{8}}R^{-1}\, dR   \lesssim  1+ \int_{   \frac{1+ |\eta|_{\mathbf{H}_n}}{\sqrt{t}} \lesssim R <1} R^{-1}\, dR  \\
 & = 1+\log \left(\tfrac{\sqrt{t}}{1+ |\eta|_{\mathbf{H}_n}}\right) \lesssim 1+ \log t \lesssim \log (e+t)
\end{align*} and, consequently,
\begin{align*}
& \int_0^{\frac{t}{2}}  \int_{\{   |\zeta|_{\mathbf{H}_n}\geq 2  |\eta|_{\mathbf{H}_n}\} } h_{t-s}(\zeta)\left(1+s+|\eta\circ\zeta^{-1}|^2_{\mathbf{H}_n}\right)^{-\alpha} \, d\zeta \, ds  \\
& \qquad  \lesssim  t^{-\frac{Q}{2}}   e^{-\frac{c'}{t}|\eta|^2_{\mathbf{H}_n}}  \log (e+t)  \lesssim  t \left(1+t+|\eta|^2_{\mathbf{H}_n}\right)^{-\frac{Q}{2}-1} \log (e+t)=  t \left(1+t+|\eta|^2_{\mathbf{H}_n}\right)^{-\alpha} \log (e+t).
\end{align*}
Combining all possible subcases, we proved eventually \eqref{fundamental estimate Duhamel term inhom}.
\end{proof}

\subsection{A priori estimates}

Combining the results from Propositions \ref{Prop fund estimate linear term inhom case}, \ref{Prop fund estimate linear term inhom case kappa<Q}  and \ref{Prop fund estimate Duhamel term inhom}, we can prove now the following a priori estimates. These  will play a fundamental role in the proof of Theorems \ref{Thm GESD} and \ref{Thm LER}.

\begin{proposition} Let $\kappa$ be a positive parameter and $T\in (0,\infty]$. Moreover, we consider $u_0\in (1+|\cdot|_{\mathbf{H}_n}^2)^{-\frac{\kappa}{2}}L^\infty (\mathbf{H}_n)$ and  a source term $F: [0,T)\times \mathbf{H}_n\to \mathbb{R}$.
\begin{enumerate}
\item If $\kappa\neq Q$, then,
\begin{align}\label{a priori est hom sol}
|e^{t\Delta_{\hor}}u_0(\eta)| \leq C_0  \big(1+t+|\eta|_{\mathbf{H}_n}^2\big)^{-\frac{1}{2}\min\{\kappa,Q\}}\big\| \big(1+|\eta|^2_{\mathbf{H}_n}\big)^{\frac{\kappa}{2}} u_0(\eta)\big\|_{ L^\infty (\mathbf{H}_n)} 
\end{align}  for any $t\in [0,T)$ and $\eta\in \mathbf{H}_n$.
\item If $\kappa<Q$ and $\big(1+t+|\eta|^2_{\mathbf{H}_n}\big)^{\frac{\kappa}{2}+1} F\in  L^\infty\big(0,T\, ; L^\infty(\mathbf{H}_n) \big)$, then,
\begin{align} \label{a priori est Duh sol}
\Big| \int_0^t e^{(t-s)\Delta_{\hor}}F(s) \, ds (\eta)\Big|\leq C_1 \big(1+t+|\eta|_{\mathbf{H}_n}^2\big)^{-\frac{\kappa}{2}} \big\| \big(1+t+|\eta|_{\mathbf{H}_n}^2\big)^{\frac{\kappa}{2}+1} F(t,\eta)\big\|_{L^{\infty}(0,T \, ; L^\infty (\mathbf{H}_n))}
\end{align} for any $t\in [0,T)$ and $\eta\in \mathbf{H}_n$.
\item  If $\theta\in [0,1)$ and $\big(1+t+|\eta|^2_{\mathbf{H}_n}\big)^{\frac{Q}{2}+\theta} F\in  L^\infty\big(0,T \, ; L^\infty(\mathbf{H}_n) \big)$ and $T<\infty$, then,
\begin{align} \label{a priori est Duh sol theta}
\Big| \int_0^t e^{(t-s)\Delta_{\hor}}F(s)\, ds (\eta)\Big| \leq C_\theta T^{1-\theta} \big(1+t+|\eta|_{\mathbf{H}_n}^2\big)^{-\frac{Q}{2}} \big\| \big(1+t+|\eta|_{\mathbf{H}_n}^2\big)^{\frac{Q}{2}+\theta} F(t,\eta)\big\|_{L^{\infty}(0,T \, ; L^\infty (\mathbf{H}_n))}
\end{align} for any $t\in [0,T)$ and $\eta\in \mathbf{H}_n$.
\item  If $\big(1+t+|\eta|^2_{\mathbf{H}_n}\big)^{\frac{Q}{2}+1} F\in  L^\infty\big(0,T \, ; L^\infty(\mathbf{H}_n) \big)$ and $T<\infty$, then,
\begin{align} \label{a priori est Duh sol log}
\Big| \int_0^t e^{(t-s)\Delta_{\hor}}F(s)\, ds (\eta)\Big| \leq C_2 \log(e+T) \big(1+t+|\eta|_{\mathbf{H}_n}^2\big)^{-\frac{Q}{2}} \big\| \big(1+t+|\eta|_{\mathbf{H}_n}^2\big)^{\frac{Q}{2}+1} F(t,\eta)\big\|_{L^{\infty}(0,T \, ; L^\infty (\mathbf{H}_n))}
\end{align} for any $t\in [0,T)$ and $\eta\in \mathbf{H}_n$.
\end{enumerate}
Here $C_0,C_1,C_\theta$ and $C_2$ denote positive constants independent of $T$.
\end{proposition}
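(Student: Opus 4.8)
The plan is to derive all four estimates from a single two-step scheme. First I would use the positivity of the heat kernel (hence the monotonicity $0\le v\le w \Rightarrow 0\le e^{t\Delta_{\hor}}v\le e^{t\Delta_{\hor}}w$) to pull the relevant weighted $L^\infty$ norm out of the integral and reduce the action of $e^{t\Delta_{\hor}}$ on $u_0$ (resp.\ of the Duhamel operator on $F$) to its action on the corresponding weight function. Then I would invoke the already established pointwise bounds of Propositions \ref{Prop fund estimate linear term inhom case}, \ref{Prop fund estimate linear term inhom case kappa<Q} and \ref{Prop fund estimate Duhamel term inhom}, and finish with elementary manipulations of the factor $1+t+|\eta|_{\mathbf{H}_n}^2$.

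For \eqref{a priori est hom sol}, set $M_0\doteq \big\|(1+|\cdot|_{\mathbf{H}_n}^2)^{\kappa/2}u_0\big\|_{L^\infty(\mathbf{H}_n)}$, so that $|u_0(\zeta)|\le M_0(1+|\zeta|_{\mathbf{H}_n}^2)^{-\kappa/2}$ a.e. Since $h_t\ge 0$ one has $|e^{t\Delta_{\hor}}u_0(\eta)|\le e^{t\Delta_{\hor}}(|u_0|)(\eta)\le M_0\,e^{t\Delta_{\hor}}\big((1+|\cdot|_{\mathbf{H}_n}^2)^{-\kappa/2}\big)(\eta)$, and it remains to apply Proposition \ref{Prop fund estimate linear term inhom case} if $\kappa>Q$ (which yields the decay exponent $Q/2=\tfrac12\min\{\kappa,Q\}$) and Proposition \ref{Prop fund estimate linear term inhom case kappa<Q} if $\kappa<Q$ (which yields $\kappa/2=\tfrac12\min\{\kappa,Q\}$). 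This is exactly why the case $\kappa=Q$ is excluded.

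For the three Duhamel estimates, set $M\doteq \big\|(1+t+|\eta|_{\mathbf{H}_n}^2)^{\beta}F\big\|_{L^\infty(0,T;L^\infty(\mathbf{H}_n))}$ with $\beta$ equal to $\tfrac{\kappa}{2}+1$, $\tfrac{Q}{2}+\theta$ and $\tfrac{Q}{2}+1$ for \eqref{a priori est Duh sol}, \eqref{a priori est Duh sol theta} and \eqref{a priori est Duh sol log} respectively; then $|F(s,\zeta)|\le M(1+s+|\zeta|_{\mathbf{H}_n}^2)^{-\beta}$, and again by $h_{t-s}\ge 0$,
\[
\Big|\int_0^t e^{(t-s)\Delta_{\hor}}F(s)\,ds\,(\eta)\Big|\le M\int_0^t e^{(t-s)\Delta_{\hor}}\big((1+s+|\cdot|_{\mathbf{H}_n}^2)^{-\beta}\big)\,ds\,(\eta).
\]
In each case $\beta$ lies in the admissible range $0<\beta\le 1+\tfrac{Q}{2}$ of Proposition \ref{Prop fund estimate Duhamel term inhom}: for \eqref{a priori est Duh sol} one has $\beta<1+\tfrac{Q}{2}$ since $\kappa<Q$, for \eqref{a priori est Duh sol theta} one has $\beta<1+\tfrac{Q}{2}$ since $\theta<1$, while \eqref{a priori est Duh sol log} is precisely the limit case $\beta=1+\tfrac{Q}{2}$. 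That proposition bounds the right-hand side by $Ct(1+t+|\eta|_{\mathbf{H}_n}^2)^{-\beta}$, with an additional factor $\log(e+t)$ in the limit case, and one concludes via the elementary inequalities $t\le 1+t+|\eta|_{\mathbf{H}_n}^2$ (for \eqref{a priori est Duh sol}); $t(1+t+|\eta|_{\mathbf{H}_n}^2)^{-\theta}=t^{1-\theta}\big(t/(1+t+|\eta|_{\mathbf{H}_n}^2)\big)^{\theta}\le t^{1-\theta}\le T^{1-\theta}$ (for \eqref{a priori est Duh sol theta}, which is where the hypothesis $T<\infty$ enters); and $t\le 1+t+|\eta|_{\mathbf{H}_n}^2$ together with $\log(e+t)\le\log(e+T)$ (for \eqref{a priori est Duh sol log}).

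Since everything rests on the previously proven propositions and on monotonicity of the heat semigroup, there is no serious obstacle here; the only points requiring attention are checking that the chosen exponent $\beta$ falls in the correct branch (strict inequality versus equality with $1+Q/2$) of Proposition \ref{Prop fund estimate Duhamel term inhom}, and keeping the $T$-dependence explicit through the factors $T^{1-\theta}$ and $\log(e+T)$ so that $C_0,C_1,C_\theta,C_2$ are genuinely independent of $T$.
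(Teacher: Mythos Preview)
Your proposal is correct and follows essentially the same route as the paper's proof: reduce to the weight functions via positivity of the heat kernel, apply Propositions \ref{Prop fund estimate linear term inhom case}, \ref{Prop fund estimate linear term inhom case kappa<Q} and \ref{Prop fund estimate Duhamel term inhom} with the appropriate choice of exponent, and finish with the elementary bounds on $t(1+t+|\eta|_{\mathbf{H}_n}^2)^{-1}$ and $\log(e+t)$. Your handling of the case split on $\beta$ and of the $T$-dependence matches the paper exactly.
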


\begin{proof}
Let us begin with the estimate of the solution of the homogeneous problem. Since $0<\kappa\neq Q$, by using \eqref{fundamental estimate linear term inhom case} and \eqref{fundamental estimate linear term inhom case kappa<Q}, we get
\begin{align*}
\left|e^{t\Delta_{\hor}}u_0(\eta)\right| & \leq   \int_{\mathbf{H}_n}h_t(\zeta^{-1}\circ \eta) |u_0(\zeta)| \, d\zeta  \leq \big\| \big(1+ |\cdot|^2_{\mathbf{H}_n}\big)^{\frac{\kappa}{2}} u_0\big\|_{ L^\infty (\mathbf{H}_n)}  \int_{\mathbf{H}_n}h_t(\zeta^{-1}\circ \eta)\big(1+ |\zeta|^{2}_{\mathbf{H}_n}\big)^{-\frac{\kappa}{2}} \, d\zeta \\
& =\big\| \big(1+ |\cdot|^2_{\mathbf{H}_n}\big)^{\frac{\kappa}{2}} u_0\big\|_{ L^\infty (\mathbf{H}_n)}  \Big(e^{t\Delta_{\hor}}\big(1+|\cdot|^{2}_{\mathbf{H}_n}\big)^{-\frac{\kappa}{2}}\Big)(\eta)\\ & \lesssim   \big(1+t+|\eta|_{\mathbf{H}_n}^2\big)^{-\frac{1}{2}\min\{\kappa,Q\}} \big\| \big(1+ |\cdot|^2_{\mathbf{H}_n}\big)^{\frac{\kappa}{2}} u_0\big\|_{ L^\infty (\mathbf{H}_n)} .
\end{align*} We prove now the second estimate. Applying \eqref{fundamental estimate Duhamel term inhom} for $\alpha=\frac{\kappa}{2}+1<1+\frac{Q}{2}$, it results
\begin{align*}
 \left|\int_0^t e^{(t-s)\Delta_{\hor}}F(s)\, ds (\eta)\right|
& \leq \int_0^t \int_{\mathbf{H}_n} h_{t-s}(\zeta^{-1}\circ \eta)|F(s,\zeta)|\, d\zeta \, ds \\
 & \leq \big\| \big(1+t+|\eta|_{\mathbf{H}_n}^2\big)^{\frac{\kappa}{2}+1} F(t,\eta)\big\|_{L^{\infty}(0,T \, ; L^\infty (\mathbf{H}_n))} \int_0^t \int_{\mathbf{H}_n} h_{t-s}(\zeta^{-1}\circ \eta)\big(1+s+|\zeta|_{\mathbf{H}_n}^2\big)^{-\frac{\kappa}{2}-1}\, d\zeta \, ds \\
 & \lesssim  t \, \big(1+t+|\eta|_{\mathbf{H}_n}^2\big)^{-\frac{\kappa}{2}-1}\| \big(1+t+|\eta|_{\mathbf{H}_n}^2\big)^{\frac{\kappa}{2}+1} F(t,\eta)\big\|_{L^{\infty}(0,T \, ; L^\infty (\mathbf{H}_n))}   \\
 & \lesssim \big(1+t+ |\eta|_{\mathbf{H}_n}^2\big)^{-\frac{\kappa}{2}}  \| \big(1+t+|\eta|_{\mathbf{H}_n}^2\big)^{\frac{\kappa}{2}+1} F(t,\eta)\big\|_{L^{\infty}(0,T \, ; L^\infty (\mathbf{H}_n))}.
\end{align*} Similarly, for $\alpha=\frac{Q}{2}+\theta<1+\frac{Q}{2}$, \eqref{fundamental estimate Duhamel term inhom} yields
\begin{align*}
\left|\int_0^t e^{(t-s)\Delta_{\hor}}F(s)\, ds (\eta)\right|
& \leq \int_0^t \int_{\mathbf{H}_n} h_{t-s}(\zeta^{-1}\circ \eta)|F(s,\zeta)|\, d\zeta \, ds \\
 & \leq \big\| \big(1+t+|\eta|_{\mathbf{H}_n}^2\big)^{\frac{Q}{2}+\theta} F(t,\eta)\big\|_{L^{\infty}(0,T \, ; L^\infty (\mathbf{H}_n))} \int_0^t \int_{\mathbf{H}_n} h_{t-s}(\zeta^{-1}\circ \eta)\big(1+s+|\zeta|_{\mathbf{H}_n}^2\big)^{-\frac{Q}{2}-\theta}\, d\zeta \, ds \\
 & \lesssim  t \, \big(1+t+|\eta|_{\mathbf{H}_n}^2\big)^{-\frac{Q}{2}-\theta}\| \big(1+t+|\eta|_{\mathbf{H}_n}^2\big)^{\frac{Q}{2}+\theta} F(t,\eta)\big\|_{L^{\infty}(0,T \, ; L^\infty (\mathbf{H}_n))}   \\
 & \lesssim T^{1-\theta}\big(1+t+ |\eta|_{\mathbf{H}_n}^2\big)^{-\frac{Q}{2}}  \| \big(1+t+|\eta|_{\mathbf{H}_n}^2\big)^{\frac{Q}{2}+\theta} F(t,\eta)\big\|_{L^{\infty}(0,T \, ; L^\infty (\mathbf{H}_n))}.
\end{align*}
 So, we proved \eqref{a priori est Duh sol theta} as well. Finally, the proof of \eqref{a priori est Duh sol log} is completely analogous to that of \eqref{a priori est Duh sol theta} (formally for $\theta=1$), the only difference is that we have to employ \eqref{fundamental estimate Duhamel term inhom} in the limit case $\alpha=1+Q/2$, obtaining in this way the logarithmic factor.  The proof is complete.
\end{proof}

\section{Global existence of small data solutions in the supercritical case}

In this and in next section, we will prove the global in time existence of small data solutions in the super-Fuijita case and the local in time existence of solutions in the sub-Fujita case for the Cauchy problem \eqref{semilinear heat Heisenberg}, respectively. As \emph{mild solutions} to \eqref{semilinear heat Heisenberg} we consider the solutions in certain weighted $L^\infty$ spaces of the nonlinear integral equation \eqref{nonlinear integral equation} as in Definition \ref{Def mild sol}.
 For this reason, we introduce the nonlinear integral operator
\begin{align*}
\Phi [u](t,\eta) & \doteq \varepsilon \, \big(e^{t\Delta_{\hor}} u_0\big)(\eta)+\int_0^t e^{(t-s)\Delta_{\hor}}|u(s)|^p \, ds (\eta) \\
& = \varepsilon \int_{\mathbf{H}_n} h_t(\zeta^{-1}\circ \eta)u_0(\zeta) \, d\zeta +\int_0^t \int_{\mathbf{H}_n} h_{t-s}(\zeta^{-1}\circ \eta)|u(s,\zeta)|^p \, d\zeta \, ds.
\end{align*} Therefore, our problem is reduced to find fixed points for the operator $\Phi$ in suitable function spaces.


\begin{proof}[Proof of Theorem \ref{Thm GESD}]
Let us denote $R_0\doteq \| \, (1+|\cdot|_{\mathbf{H}_n}^2)^{\frac{\kappa}{2}} u_0\|_{L^\infty(\mathbf{H}_n)}$. We shall prove that $\Phi$ is a contraction mapping from $\mathfrak{B}(R\varepsilon)\doteq \{u\in \mathrm{X}_{\kappa,T}: \|u\|_{\mathrm{X}_{\kappa,T}}\leq R\varepsilon\}$ into itself under suitable requirements for $R$ and $\varepsilon$. Combining \eqref{a priori est hom sol} and \eqref{a priori est Duh sol}, it follows
\begin{align*}
\| \Phi[u] \|_{X_{\kappa,T}}  & = \big\|\big(1+t+|\eta|_{\mathbf{H}_n}^2\big)^{\frac{\kappa}{2}}\Phi[u](t,\eta)\big\|_{L^\infty(0,T; L^\infty(\mathbf{H}_n))} \\
& \leq \varepsilon \big\|\big(1+t+|\eta|_{\mathbf{H}_n}^2\big)^{\frac{\kappa}{2}}e^{t\Delta_{\hor}}u_0\big\|_{L^\infty(0,T; L^\infty(\mathbf{H}_n))} + \Big\|\big(1+t+|\eta|_{\mathbf{H}_n}^2\big)^{\frac{\kappa}{2}}\int_0^t e^{(t-s)\Delta_{\hor}}|u(s,\cdot)|^p\, ds (\eta)\Big\|_{L^\infty(0,T; L^\infty(\mathbf{H}_n))} \\
& \leq  C_0\, \varepsilon \|(1+|\cdot|_{\mathbf{H}_n}^2)^{\frac{\kappa}{2}}u_0\|_{ L^\infty(\mathbf{H}_n)}+ C_1\big\|\big(1+t+|\eta|_{\mathbf{H}_n}^2\big)^{\frac{\kappa}{2}+1}|u(s,\eta)|^p\big\|_{L^\infty(0,T; L^\infty(\mathbf{H}_n))}\\
& = C_0\, \varepsilon \|(1+|\cdot|_{\mathbf{H}_n}^2)^{\frac{\kappa}{2}}u_0\|_{ L^\infty(\mathbf{H}_n)}+ C_1\big\|\big(1+t+|\eta|_{\mathbf{H}_n}^2\big)^{\frac{\kappa}{2}}u(s,\eta)\big\|_{L^\infty(0,T; L^\infty(\mathbf{H}_n))}^p \\
& = C_0 R_0\, \varepsilon + C_1\|u\|_{\mathrm{X}_{\kappa,T}}^p,
\end{align*} where in the second last step we employed the equality $\frac{\kappa}{2}+1=\frac{\kappa p}{2}$. Using again this relation for $\kappa$, \eqref{a priori est Duh sol} and the estimate
\begin{equation}\label{main value theorem |.|^p}
\big| |u|^p-|v|^p \big|\leq p |u-v|\big(|u|^{p-1}+|v|^{p-1}\big),
\end{equation} we arrive at
\begin{align*}
\| \Phi[u] -\Phi[v] \|_{X_{\kappa,T}} &= \Big\|\big(1+t+|\eta|_{\mathbf{H}_n}^2\big)^{\frac{\kappa}{2}}\int_0^t e^{(t-s)\Delta_{\hor}}\big(|u(s,\cdot)|^p-|v(s,\cdot)|^p\big)\, ds (\eta)\Big\|_{L^\infty(0,T; L^\infty(\mathbf{H}_n))} \\
& \leq   C_1\big\|\big(1+t+|\eta|_{\mathbf{H}_n}^2\big)^{\frac{\kappa}{2}+1}\big(|u(s,\eta)|^p-|v(s,\eta)|^p\big)\big\|_{L^\infty(0,T; L^\infty(\mathbf{H}_n))} \\
& =   p\, C_1\big\|\big(1+t+|\eta|_{\mathbf{H}_n}^2\big)^{\frac{\kappa}{2}p}|u(s,\eta)-v(s,\eta)|\big(|u(s,\eta)|^{p-1}+|v(s,\eta)|^{p-1}\big)\big\|_{L^\infty(0,T; L^\infty(\mathbf{H}_n))} \\
& \leq   p \, C_1\| u-v\|_{X_{\kappa,T}} \Big(\| u\|_{X_{\kappa,T}}^{p-1}+\| v\|_{X_{\kappa,T}}^{p-1}\Big).
\end{align*} Summarizing, we proved
\begin{align*}
\| \Phi[u] \|_{X_{\kappa,T}}  & \leq C_0 R_0\, \varepsilon + C_1\|u\|_{\mathrm{X}_{\kappa,T}}^p, \\
\| \Phi[u] -\Phi[v] \|_{X_{\kappa,T}} & \leq   p \, C_1\| u-v\|_{X_{\kappa,T}} \Big(\| u\|_{X_{\kappa,T}}^{p-1}+\| v\|_{X_{\kappa,T}}^{p-1}\Big).
\end{align*} Therefore, if we assume that
\begin{align*}
R= 2C_1R_0 \ \ \mbox{and} \ \ 0<\varepsilon\leq \varepsilon_0 \doteq \min\{(2C_1)^{-(p-1)}R^{-1},(4pC_1)^{-(p-1)}R^{-1}\},
\end{align*} then,
\begin{align*}
C_0R_0\, \varepsilon \leq  2^{-1}R\, \varepsilon,  \quad  C_1(R\varepsilon)^p \leq  2^{-1}R\, \varepsilon \quad \mbox{and} \quad 2pC_1 (R\varepsilon)^{p-1}\leq 2^{-1}.
\end{align*} In the above line, the first two relations imply that $\Phi$ maps $\mathfrak{B}(R\varepsilon)$ into itself, while the last inequality implies that $\Phi$ is a contraction with Lipschitz constant at most $2^{-1}$ on $\mathfrak{B}(R\varepsilon)$. Thus, by Banach-Caccioppoli fixed point theorem we have a unique mild solution $u\in \mathrm{X}_{\kappa,T}$. As the previous estimates are independent of $T$, we may prolong this unique solution for all times, obtaining a unique mild solution in $\mathrm{X}_{\kappa}$. Finally, the decay estimates follows immediately from $u\in \mathfrak{B}(R\varepsilon)$. This completes the proof.
\end{proof}

\section{Lower bound estimate for the lifespan  in the sub-Fujita case}

Next we prove a local in time existence result for mild solutions to the Cauchy problem in \eqref{semilinear heat Heisenberg} in the sub-Fujita case $p\in(1,p_{\Fuj}(Q)]$. Besides, we derive the lower bound estimate for the lifespan of the solution \eqref{lower bound lifespan}.


\begin{proof}[Proof of Theorem \ref{Thm LER}] Here, we will modify in a suitable way the proof of Theorem \ref{Thm GESD} in order to compensate the fact we are in the sub-Fujita case. Let us begin with the subcritical case. We define $\theta\doteq \frac{Q}{2}(p-1)$. Thanks to $1<p<p_{\Fuj}(Q)$ we get $\theta\in (0,1)$. Combining \eqref{a priori est hom sol} and \eqref{a priori est Duh sol theta}, we obtain
\begin{align*}
\| \Phi[u] \|_{X_{Q,T}}
& \leq  C_0\, \varepsilon \|(1+ |\cdot|_{\mathbf{H}_n}^2)^{\frac{\kappa}{2}}u_0\|_{ L^\infty(\mathbf{H}_n)}+   C_\theta T^{1-\theta} \big\|\big(1+t+|\eta|_{\mathbf{H}_n}^2\big)^{\frac{Q}{2}+\theta}|u(s,\eta)|^p\big\|_{L^\infty(0,T; L^\infty(\mathbf{H}_n))}\\
& = C_0\, \varepsilon \|(1+ |\cdot|_{\mathbf{H}_n}^2)^{\frac{\kappa}{2}}u_0\|_{ L^\infty(\mathbf{H}_n)}+   C_\theta T^{1-\theta}\big\|\big(1+t+|\eta|_{\mathbf{H}_n}^2\big)^{\frac{Q}{2}}u(s,\eta)\big\|_{L^\infty(0,T; L^\infty(\mathbf{H}_n))}^p \\
& = C_0 R_0\, \varepsilon +  C_\theta T^{1-\theta}\|u\|_{\mathrm{X}_{Q,T}}^p,
\end{align*}
where $R_0=\|(1+ |\cdot|_{\mathbf{H}_n}^2)^{\frac{\kappa}{2}}u_0\|_{L^\infty(\mathbf{H}_n)}$ as in the proof of Theorem \ref{Thm GESD} and in the second last step we used the equality $\frac{Q}{2}+\theta=\frac{Q p}{2}$. Analogously,
\begin{align*}
\| \Phi[u] -\Phi[v] \|_{X_{Q,T}} & \leq   C_\theta T^{1-\theta}\big\|\big(t+|\eta|_{\mathbf{H}_n}^2\big)^{\frac{Q}{2}+\theta}\big(|u(s,\eta)|^p-|v(s,\eta)|^p\big)\big\|_{L^\infty(0,T; L^\infty(\mathbf{H}_n))} \\
& \leq   p \,  C_\theta T^{1-\theta} \| u-v\|_{X_{Q,T}} \Big(\| u\|_{X_{Q,T}}^{p-1}+\| v\|_{X_{Q,T}}^{p-1}\Big),
\end{align*} where we used \eqref{main value theorem |.|^p} and the previous relation between $Q,p$ and $\theta$. Summarizing, we proved
\begin{align}
\| \Phi[u] \|_{X_{Q,T}}  & \leq C_0 R_0\, \varepsilon +  C_\theta T^{1-\theta}\|u\|_{\mathrm{X}_{Q,T}}^p , \label{1st ineq Phi} \\
\| \Phi[u] -\Phi[v] \|_{X_{Q,T}} & \leq p \,  C_\theta T^{1-\theta} \| u-v\|_{X_{Q,T}} \Big(\| u\|_{X_{Q,T}}^{p-1}+\| v\|_{X_{Q,T}}^{p-1}\Big) .\label{2nd ineq Phi}
\end{align} Therefore, if we require $R=2C_0 R_0$ and $T\leq C_{Q,p} \varepsilon^{-\frac{p-1}{1-\theta}}$, where $C_{Q,p}\doteq \min\left\{(2C_\theta)^{\frac{1}{\theta-1}} R^{\frac{1-p}{1-\theta}};(4pC_\theta)^{\frac{1}{\theta-1}} R^{\frac{1-p}{1-\theta}}\right\}$, then, from \eqref{1st ineq Phi} and \eqref{2nd ineq Phi} we get
\begin{equation} \label{Phi is a contraction}
\begin{split}
\| \Phi[u] \|_{X_{\kappa,T}}  & \leq R\varepsilon, \\
\| \Phi[u] -\Phi[v] \|_{X_{\kappa,T}} & \leq 2^{-1} \| u-v\|_{X_{\kappa,T}}
\end{split}
\end{equation} for any $u,v\in \mathfrak{B}(R\varepsilon)$.

Thus, we find a unique local in time solution to \eqref{semilinear heat Heisenberg} at least up to the time $C_{Q,p} \, \varepsilon^{-\frac{p-1}{1-\theta}}= C_{Q,p}\,  \varepsilon^{-(\frac{1}{p-1}-\frac{Q}{2})^{-1}}$. This implies immediately that the upper bound of the maximal time interval of existence for the local solution, that is, the lifespan $T_\varepsilon$, has to fulfill \eqref{lower bound lifespan} in the subcritical case.

Let us deal with the critical case $p=p_{\Fuj}(Q)$.
Combining \eqref{a priori est hom sol} and \eqref{a priori est Duh sol log}, we obtain
\begin{align*}
\| \Phi[u] \|_{X_{Q,T}}
& \leq  C_0\, \varepsilon \|(1+ |\cdot|_{\mathbf{H}_n}^2)^{\frac{\kappa}{2}}u_0\|_{ L^\infty(\mathbf{H}_n)}+   C_2 \log(e+T) \big\|\big(1+t+|\eta|_{\mathbf{H}_n}^2\big)^{\frac{Q}{2}+1}|u(s,\eta)|^p\big\|_{L^\infty(0,T; L^\infty(\mathbf{H}_n))}\\
& = C_0\, \varepsilon \|(1+ |\cdot|_{\mathbf{H}_n}^2)^{\frac{\kappa}{2}}u_0\|_{ L^\infty(\mathbf{H}_n)}+  C_2 \log(e+T)\big\|\big(1+t+|\eta|_{\mathbf{H}_n}^2\big)^{\frac{Q p}{2}}|u(s,\eta)|^p\big\|_{L^\infty(0,T; L^\infty(\mathbf{H}_n))} \\
& = C_0 R_0\, \varepsilon + C_2 \log(e+T) \|u\|_{\mathrm{X}_{Q,T}}^p,
\end{align*}  and, similarly,
\begin{align*}
\| \Phi[u] -\Phi[v] \|_{X_{Q,T}} & \leq p \,   C_2 \log(e+T)  \| u-v\|_{X_{Q,T}} \Big(\| u\|_{X_{Q,T}}^{p-1}+\| v\|_{X_{Q,T}}^{p-1}\Big).
\end{align*} If we assume that $T\leq \exp\big(\widetilde{C}_Q\varepsilon^{-(p-1)}\big)$, where $\widetilde{C}_Q\doteq \min \left\{2C_2R^{1-p},4p\, C_2R^{1-p}\right\}$, then, $\Phi$ satisfies \eqref{Phi is a contraction}. Hence, we have a local solution to \eqref{semilinear heat Heisenberg} in the critical case at least until the time $\exp\big(\widetilde{C}_Q \varepsilon^{-(p-1)}\big)$. Therefore, we showed the lower bound estimate of the lifespan in \eqref{lower bound lifespan} for the critical case as well.
So, the proof is complete.
\end{proof}

\section{Concluding remarks}

Let us summarize what we proved in the main results of this paper. Combining Theorem \ref{Thm GESD} and Proposition \ref{Prop test function method}, we get that $p_{\Fuj}(Q)$ is the critical exponent for the semlinear Cauchy problem \eqref{semilinear heat Heisenberg}. So, in the Heisenberg group the critical exponent for the semilinear heat equation with power nonlinearity is exactly the exponent which is the analogous one of Fujita exponent, obtained by replacing the dimension of $\mathbb{R}^n$  by the homogeneous dimension $Q=2n+2$ of $\mathbf{H}_n$.  Furthermore, we proved the sharp lifespan estimate for local solutions both in the subcritical and in the critical case, namely,
\begin{align*}
T_\varepsilon \simeq \begin{cases} C\varepsilon^{-(\frac{1}{p-1}-\frac{Q}{2})^{-1}} & \mbox{if} \ \ 1<p<p_{\Fuj}(Q), \\ \exp\big(C \varepsilon ^{-(p-1)}\big) & \mbox{if} \ \ p=p_{\Fuj}(Q). \end{cases}
\end{align*} Note that also for the lifespan estimate the situation is completely analogous to the Euclidean case.

\section*{Acknowledgments}

V. Georgiev is supported in part by
GNAMPA - Gruppo Nazionale per l'Analisi Matematica,
la Probabilit\`a e le loro Applicazioni,
by Institute of Mathematics and Informatics,
Bulgarian Academy of Sciences and Top Global University Project, Waseda University,  by the University of Pisa, Project PRA 2018 49.
A. Palmieri is supported by the University of Pisa, Project PRA 2018 49.



\addcontentsline{toc}{chapter}{Bibliography}


\end{document}